\newcommand{\bydef}{:=}
\newcommand{\id}{\mathrm{id}}%identity map
\newcommand{\trace}{\mathrm{tr}}
\newcommand{\op}{\mathrm{op}}
\newcommand{\cA}{\mathcal{A}}%algebras
\newcommand{\cB}{\mathcal{B}} 
\newcommand{\cC}{\mathcal{C}}
\newcommand{\cD}{\mathcal{D}} 
\newcommand{\cH}{\mathcal{H}} 
\newcommand{\cJ}{\mathcal{J}} 
\newcommand{\cK}{\mathcal{K}}
\newcommand{\cL}{\mathcal{L}} 
\newcommand{\cM}{\mathcal{M}}
\newcommand{\cN}{\mathcal{N}}
\newcommand{\cS}{\mathcal{S}} 
\newcommand{\cT}{\mathcal{T}}
\newcommand{\cW}{\mathcal{W}}
\newcommand{\frg}{{\mathfrak g}}%Lie algebras
\newcommand{\frs}{{\mathfrak s}}
\newcommand{\ZZ}{\mathbb{Z}}
\newcommand{\FF}{\mathbb{F}} 
\DeclareMathOperator{\Hom}{\mathrm{Hom}}
\DeclareMathOperator{\End}{\mathrm{End}}
\DeclareMathOperator{\AAut}{\mathbf{Aut}}
\DeclareMathOperator{\Der}{\mathrm{Der}}
\DeclareMathOperator{\Cent}{\mathrm{Cent}}
\DeclareMathOperator{\Mat}{\mathrm{Mat}}
\newcommand{\ad}{\mathrm{ad}}
\newcommand{\frsl}{{\mathfrak{sl}}}
\newcommand{\frso}{{\mathfrak{so}}}
\newcommand{\frpsl}{{\mathfrak{psl}}}
\newcommand{\frgl}{{\mathfrak{gl}}}
\newcommand{\frosp}{{\mathfrak{osp}}}
\newcommand{\frstu}{{\mathfrak{stu}}}
\newcommand{\tri}{\mathfrak{tri}}
\newcommand{\frel}{\mathfrak{el}}
\newcommand{\frbr}{\mathfrak{br}}
\newcommand{\frinstrl}{\mathfrak{instrl}}
\newcommand{\SLs}{\mathbf{SL}}
\providecommand{\espan}[1]{\textup{span}\left\{ #1\right\}}
\providecommand{\ptriple}[1]{\boldsymbol{(}#1\boldsymbol{)}}
\newenvironment{romanenumerate} 
{\begin{enumerate}

}{\end{enumerate}}
\newcommand{\subo}{_{\bar 0}} 
\newcommand{\subuno}{_{\bar 1}}
\newcommand{\cE}{\mathcal{E}}
\newcommand{\Cl}{\mathfrak{Cl}} %Clifford algebra
\newcommand{\bup}{\textup{b}} 
\newcommand{\boldbup}{\textup{\textbf{b}}}
\newcommand{\nup}{\textup{n}}
\newcommand{\hup}{\textup{h}}
\newcommand{\sVec}{\mathsf{sVec}}
\newcommand{\balpha}{\boldsymbol{\alpha}}
\newcommand{\Repap}{\mathsf{Rep}\,\balpha_p}
\newcommand{\Ver}{\mathsf{Ver}}
\newcommand{\Lss}{\mathcal{L}^{\textup{ss}}}
\newtheorem{theorem}{Theorem}[section]
\newtheorem{proposition}[theorem]{Proposition}
\newtheorem{lemma}[theorem]{Lemma}
\newtheorem{corollary}[theorem]{Corollary}
\theoremstyle{definition} 
\newtheorem{definition}[theorem]{Definition}
\newtheorem{example}[theorem]{Example}
\theoremstyle{remark} \newtheorem{remark}[theorem]{Remark}
\numberwithin{equation}{section}
\def\hregleta{\hrule height .5pt}
\def\hreglon{\hrule height 1pt}
\def\vreglon{\vrule height 12pt width1pt depth 4pt}
\def\hreglonfill{\leaders\hreglon\hfill}
\def\hregletafill{\leaders\hregleta\hfill}
\begin{document}

\title{$J$-ternary algebras, structurable algebras, and Lie superalgebras}

\author[I.~Cunha]{Isabel Cunha}
\address{Departamento de
Matem\'{a}tica e Centro de Matem\'atica e Aplica\c{c}\~oes da Universidade da Beira Interior,
Universidade da Beira Interior, 6201-001 Covilh\~{a}, Portugal}
\email{icunha@ubi.pt} 
\thanks{I.C. has been supported by FCT (Funda\c{c}\~ao para a Ci\^encia e
a Tecnologia, Portugal), research project UIDB/00212/2020 of CMA-UBI (Centro de 
Matem\'atica e Aplica\c{c}\~oes, Universidade da Beira Interior, Portugal).}

\author[A.~Elduque]{Alberto Elduque} 
\address{Departamento de
Matem\'{a}ticas e Instituto Universitario de Matem\'aticas y
Aplicaciones, Universidad de Zaragoza, 50009 Zaragoza, Spain}
\email{elduque@unizar.es} 
\thanks{A.E. has been supported by grant
PID2021-123461NB-C21, funded by 
MCIN/AEI/10.13039/ 501100011033 and by
 ``ERDF A way of making Europe''; and by grant 
E22\_20R (Gobierno de Arag\'on).}

\subjclass[2020]{Primary 17B60; Secondary 17B25; 17B50; 17A30}

\keywords{$J$-ternary algebra, structurable, Lie algebra, Lie superalgebra, exceptional}

%\date{}

\begin{abstract}
A Lie superalgebra is attached to any finite-dimensional $J$-ternary algebra over an algebraically closed
field of characteristic $3$, using a process of semisimplification via tensor categories. Some of the
exceptional simple Lie algebras, specific of this characteristic, are obtained in this way from $J$-ternary
algebras coming from structurable algebras and, in particular,
a new magic square of Lie superalgebras is constructed, with entries depending on a pair of composition
algebras.
\end{abstract}

\maketitle

\section{Introduction}\label{se:intro}

In a recent work \cite{Kannan}, Arun Kannan showed how to obtain a Lie superalgebra starting with a 
Lie algebra
over an algebraically closed field $\FF$ of characteristic $3$ endowed with a nilpotent derivation 
$d$ such that $d^3=0$, using a 
semisimplification process of symmetric tensor categories, taking advantage that the
semisimplification of the  category of
representations of $\FF[X]/(X^3)$ is the Verlinde category $\Ver_3$, which is equivalent to the category of
finite-dimensional vector superspaces over $\FF$. In characteristic $p>3$ still a suitable subcategory
of the Verlinde category is equivalent to the category of vector superspaces, and this allows Kannan  
to get interesting exceptional Lie superalgebras also in characteristic
$5$ from classical Lie algebras endowed with a derivation $d$ such that $d^5=0$.

A natural class of Lie algebras over a field of characteristic $3$ equipped with nilpotent derivations of
degree $3$ is given by the Lie algebras with a subalgebra isomorphic to $\frsl_2$ and such that, as 
a module for this subalgebra, they are a sum of copies of the adjoint, the natural and the trivial modules.
In this case, the adjoint action of the element 
$F=\left(\begin{smallmatrix}0&0\\ 1&0\end{smallmatrix}\right)\in\frsl_2$ is a nilpotent derivation and
$\ad_F^3=0$.
These are the Lie algebras with a \emph{short $\SLs_2$-structure} in the sense of Vinberg \cite{Vinberg}. 
These Lie algebras, over fields of characteristic $\neq 2,3$, are coordinatized by the so called 
$J$-ternary algebras, introduced by 
Hein \cite{Hein_Lie} and Allison \cite{Allison_JTernary}. However, only minor changes are needed in
characteristic $3$.

Therefore, giving a $J$-ternary algebra $\cT$ over a field of characteristic $3$, we can attach canonically
a Lie algebra $\cL(\cT)$ with a short $\SLs_2$-structure (see \eqref{eq:L(T)}), which can be viewed as
a Lie algebra in the category of representations of $\FF[X]/(X^3)$, and then we can apply the
semisimplification process to obtain a Lie superalgebra $\Lss(\cT)$ (Corollary
\ref{co:ternary_Lie_super}).

\smallskip

The goal of this paper is to study the main examples of simple finite-dimensional $J$-ternary
algebras over an algebraically closed field of characteristic $3$, apply the process above, and check which
Lie superalgebras appear. 

\smallskip

The outcome is that the `prototypical' $J$-ternary algebras give classical Lie superalgebras 
(\S\ref{ss:proto}), the $J$-ternary algebras obtained from simple structurable algebras of 
skew-dimension one give the new exceptional Lie superalgebras in 
\cite[Theorem 3.2]{Elduque_NewSimple3}, which are specific of characteristic $3$. For  the remaining
class of simple $J$-ternary algebras, i.e., those obtained from structurable algebras which
are tensor products of a Cayley algebra and another unital composition algebra, we get two
of the exceptional Lie superalgebras in the magic square of superalgebras in
 \cite{CunhaElduque_Extended}, as well as the exceptional Lie superalgebra $\frel(5;3)$ (notation
as in \cite{BouarroudjGrozmanLeites}).

\smallskip

The paper is organized as follows. Section \ref{se:JternarySL2} reviews the connection of $J$-ternary
algebras and Lie algebras with a short $\SLs_2$-structure, with the modifications needed to include
characteristic $3$. In particular, given any $J$-ternary algebra $\cT$ over a field of characteristic $\neq 2$,
a Lie algebra $\cL(\cT)$ with a short
$\SLs_2$-structure will be canonically defined in Theorem \ref{th:JTernarySL2}.

Section \ref{se:structurable} will show how to construct a $J$-ternary algebra out of a structurable
algebra containing a skew-symmetric element $s$ with invertible left multiplication $L_s$ (Theorem 
\ref{th:structurable_Jternary}). In the process, it will be proved that the Kantor Lie algebra 
$\cK(\cA,-)$ attached to a structurable algebra (\cite{Allison_Isotropic}) makes sense too 
in characteristic $3$ (Proposition \ref{pr:KantorLieAlgebra}), which is a result of independent interest.

Section \ref{se:tensor} is devoted to review the transition from a Lie algebra with a nilpotent
derivation of degree $3$ over a field of characteristic $3$ to a Lie superalgebra via tensor categories,
following the ideas in \cite{Kannan,DES,ElduqueEtingofKannan}. Then this will be applied to the Lie
algebra $\cL(\cT)$ attached in Section \ref{se:JternarySL2} to a $J$-ternary algebra and its derivation
$\ad_F$ above, thus obtaining a Lie superalgebra $\Lss(\cT)$.

In Section \ref{se:JternarySuper} it will be shown that if $\cT$ is a simple $J$-ternary algebra of
`prototypical type', then the Lie superalgebra $\Lss(\cT)$ is either a projective special linear Lie 
superalgebra or an orthosymplectic Lie superalgebra (Theorem \ref{th:proto_super}). The situation
for the simple $J$-ternary algebras obtained from simple structurable algebras other than the tensor 
products of a Cayley algebra and a unital composition algebra will be dealt with too.

Finally, Section \ref{se:magic} will treat the case missed in Section \ref{se:JternarySuper}. Here
three exceptional simple Lie superalgebras specific of characteristic $3$ appear: 
$\frg(3,3)$, $\frel(5;3)$ and $\frg(6,6)$. Moreover,
putting together the Lie superalgebras constructed from $J$-ternary algebras obtained from the
structurable algebras given by a tensor product of two unital composition algebras, we get a new
`magic square' of Lie superalgebras in \S\ref{ss:magic}.

\bigskip

Immediately after posting our results in arXiv, we learned that Michiel Smet had arrived to the same 
results independently. The main differences with his work \cite{Smet} are that our treatment of the
`prototypical type' in Section \ref{se:JternarySuper} is a bit more concrete, dealing directly with
classical Lie algebras instead of suitable algebras of matrices over a composition algebra; and that
in our treatment of the $J$-ternary algebras coming from a structurable algebra obtained as the tensor
product of two composition algebras in Section \ref{se:magic}, the invertible 
skew-symmetric element $s$ has no extra restrictions. We thank Michiel Smet for sharing his results
with us even before posting them to arXiv.

\bigskip

\emph{All the algebras considered will be finite-dimensional and defined over a ground field 
$\FF$ of characteristic not $2$, unless otherwise stated.}

\bigskip

\section{\texorpdfstring{$J$}{J}-ternary algebras and short  
\texorpdfstring{$\SLs_2$}{SL2}-structures on Lie algebras}\label{se:JternarySL2}

This section is devoted to review the two different related definitions of $J$-ternary algebras in 
the literature
and their role as `coordinate algebras' of certain Lie algebras.

\begin{definition}[{\cite{Hein_Lie,Hein_degree>2}}]\label{df:JTHein}
Let $\cT$ be a triple system  with ternary multiplication $xyz$. Then $\cT$ is said to be
a \emph{$J$-ternary algebra} if the following conditions hold:
\begin{subequations}
\begin{gather}
xy(uvz)=(xyu)vz+u(yxv)z+uv(xyz), \label{eq:JTHein1}\\
xyz-zyx=zxy-xzy, \label{eq:JTHein2}
\end{gather}
\end{subequations}
for any $x,y,z,u,v\in\cT$.
\end{definition}

\begin{definition}[{\cite{Allison_JTernary,AllisonBenkartGao}}]\label{df:JTAllison}
Let $\cJ$ be a unital Jordan algebra with multiplication $a\cdot b$, for $a,b\in\cJ$. Let $\cT$ be a 
unital special Jordan module for $\cJ$ with action $a\bullet x$ for $a\in \cJ$ and $x\in \cT$, so that
\[
(a\cdot b)\bullet x=\frac{1}{2}\Bigl(a\bullet(b\bullet x)+b\bullet(a\bullet x)\Bigr),
\] 
for any $a,b\in\cJ$ and $x\in \cT$.
Assume $\langle.\mid.\rangle:\cT\times \cT\rightarrow \cJ$ is a skew-symmetric bilinear map and 
$\ptriple{.,.,.}:\cT\times \cT\times \cT\rightarrow \cT$ is a trilinear product on $\cT$. Then $\cT$ is 
called a \emph{$\cJ$-ternary algebra} if the following axioms hold for any $a\in \cJ$ and $x,y,z,w,v\in \cT$:
\begin{subequations}
\begin{gather}
a\cdot\langle x\mid y\rangle =\dfrac{1}{2}\bigl(\langle a\bullet x\mid y\rangle 
+\langle x\mid a\bullet y\rangle\bigr),\label{eq:JTAllison1}
\\
a\bullet\ptriple{ x,y,z}=\ptriple{ a\bullet x,y,z}-\ptriple{ x,a\bullet y,z} +\ptriple{x,y,a\bullet z},
\label{eq:JTAllison2}
\\
\ptriple{x,y,z}=\ptriple{z,y,x}-\langle x\mid z\rangle\bullet y, \label{eq:JTAllison3}
\\
\ptriple{x,y,z}=\ptriple{y,x,z}+\langle x\mid y\rangle\bullet z, \label{eq:JTAllison4}
\\
\langle\ptriple{x,y,z}\mid w\rangle+\langle z\mid\ptriple{x,y,w}\rangle 
     =\langle x\mid\langle z\mid w\rangle\bullet y\rangle, \label{eq:JTAllison5}
\\
\ptriple{x,y,\ptriple{z,w,v}}=\ptriple{\ptriple{x,y,z},w,v}+
    \ptriple{z,\ptriple{y,x,w},v}+\ptriple{z,w,\ptriple{x,y, v}}. \label{eq:JTAllison6}
\end{gather}
\end{subequations}
\end{definition}

\smallskip

These two definitions are intimately related, as we will see in Theorem \ref{th:JTHein_Allison}. In the 
proof, a new type of triple systems will be used.

Actually, Yamaguti and Ono \cite{YamOno} defined a wide class
of triple systems: the $(\epsilon,\delta)$ Freudenthal-Kantor triple systems, which extend the 
classical Freudenthal triple systems and are useful tools in the construction of Lie algebras and
 superalgebras.

\begin{definition}[{\cite{YamOno}}]\label{df:edFKTS}
An $(\epsilon,\delta)$ \emph{Freudenthal-Kantor triple system} ($\epsilon,\delta$ are either $1$ or $-1$)  
is a triple system $U$, with multiplication $xyz$, such that, if $L(x,y),K(x,y)\in\End_{\FF}(U)$ are 
defined by
\begin{equation}\label{eq:LK}
\left\{\begin{aligned}
L(x,y)z&=xyz\\
K(x,y)z&=xzy-\delta yzx,
\end{aligned}\right.
\end{equation}
then
\begin{subequations}\label{eq:FK}
\begin{gather}
[L(u,v),L(x,y)]=L\bigl(L(u,v)x,y\bigr)+\epsilon L\bigl(x,L(v,u)y\bigr),\label{eq:FK1}\\
K\bigl(K(u,v)x,y\bigr)=L(y,x)K(u,v)-\epsilon K(u,v)L(x,y),\label{eq:FK2}
\end{gather}
hold for any $x,y,u,v\in U$.
\end{subequations}
\end{definition}

For $\epsilon=-1$ and $\delta=1$, these are the so called \emph{Kantor triple systems} (or 
generalized Jordan triple systems of second order \cite{Kan73}).

Let us show the close relationship between the $J$-ternary algebras and some particular 
$(1,1)$ Freudenthal-Kantor triple systems.

As in \cite{EldOku_dicyclic} we get the following properties.

\begin{lemma}\label{le:FK1}
Let $U$ be a triple system satisfying equation \eqref{eq:FK1}, with $\epsilon\in\{\pm1\}$, and define 
the endomorphisms $S(x,y)$ and $T(x,y)\in\End_\FF(U)$ by
\begin{equation}\label{eq:STs}
\begin{split}
S(x,y)&=L(x,y)+\epsilon L(y,x)\\
T(x,y)&=L(y,x)-\epsilon L(x,y).
\end{split}
\end{equation}
Then for any $u,v\in U$, $S(u,v)$ is a derivation of the triple system $(U,xyz)$, while $T(u,v)$ satisfies
\[
T(u,v)\bigl(xyz\bigr)=\bigl(T(u,v)x\bigr)yz-x\bigl(T(u,v)y\bigr)z+xy\bigl(T(u,v)z\bigr)
\]
for any $x,y,z\in U$. As a consequence, the following equations hold:
\begin{subequations}\label{subeq:STs}
\begin{align}
[S(u,v),L(x,y)]&=L\bigl(S(u,v)x,y\bigr)+L\bigl(x,S(u,v)y\bigr)\label{eq:[SL]}\\
[T(u,v),L(x,y)]&=L\bigl(T(u,v)x,y\bigr)-L\bigl(x,T(u,v)y\bigr)\label{eq:[TL]}\\
[S(u,v),T(x,y)]&=T\bigl(S(u,v)x,y\bigr)+T\bigl(x,S(u,v)y\bigr)\label{eq:[ST]}\\
[T(u,v),S(x,y)]&=-\epsilon T\bigl(T(u,v)x,y\bigr)+\epsilon T\bigl(x,T(u,v)y\bigr)\label{eq:[TS]}\\
[T(u,v),T(x,y)]&=-\epsilon S\bigl(T(u,v)x,y\bigr)+\epsilon S\bigl(x,T(u,v)y\bigr)\label{eq:[TT]}
\end{align}
\end{subequations}
\end{lemma}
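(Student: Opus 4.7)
The strategy is to reduce every assertion to the single hypothesis \eqref{eq:FK1} and the algebraic definitions \eqref{eq:STs} of $S(x,y)$ and $T(x,y)$. I would proceed in three stages.

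First, I would record the two basic translations. Since $xyz = L(x,y)z$, the statement that an endomorphism $D$ is a derivation of the triple product is equivalent to the commutator identity
\[
[D, L(x,y)] = L(Dx, y) + L(x, Dy),
\]
and the twisted Leibniz rule claimed for $T(u,v)$ is equivalent to
\[
[T(u,v), L(x,y)] = L(T(u,v)x, y) - L(x, T(u,v)y).
\]
Thus the first half of the lemma is exactly the assertion of identities \eqref{eq:[SL]} and \eqref{eq:[TL]}. I would prove these two directly. For \eqref{eq:[SL]}, expand $S(u,v) = L(u,v) + \epsilon L(v,u)$, apply \eqref{eq:FK1} twice, and use $\epsilon^{2} = 1$ to recombine the inner arguments as $S(u,v)x$ and $S(u,v)y$. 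For \eqref{eq:[TL]}, expand $T(u,v) = L(v,u) - \epsilon L(u,v)$ and repeat; the sign $-\epsilon$ in the definition of $T$ is exactly what is needed so that the inner arguments again recombine into $T(u,v)x$ and $T(u,v)y$, with the minus sign appearing on the right-hand side.

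Second, I would derive the remaining three identities \eqref{eq:[ST]}, \eqref{eq:[TS]}, \eqref{eq:[TT]} formally from \eqref{eq:[SL]} and \eqref{eq:[TL]} together with the definitions of $S$ and $T$. The pattern is always the same: in each commutator, replace the second argument by its expansion in terms of $L$, distribute the bracket, apply \eqref{eq:[SL]} or \eqref{eq:[TL]} to each piece, and then group the four resulting $L$-terms back into $S$'s or $T$'s using \eqref{eq:STs}. For instance, expanding $T(x,y) = L(y,x) - \epsilon L(x,y)$ in $[S(u,v), T(x,y)]$ and applying \eqref{eq:[SL]} twice produces four terms which regroup as $T(S(u,v)x, y) + T(x, S(u,v)y)$, giving \eqref{eq:[ST]}. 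The same template, with the signs dictated by which of $S$ or $T$ sits inside, yields \eqref{eq:[TS]} and \eqref{eq:[TT]}; the only point requiring care is that whenever \eqref{eq:[TL]} is used, an extra factor of $-\epsilon$ must be absorbed by the recombination, accounting for the signs $-\epsilon, +\epsilon$ in the right-hand sides of the last two identities.

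The proof is therefore entirely formal once one sets up the dictionary in the first step. The main bookkeeping hazard, and the only real obstacle, is tracking the signs of $\epsilon$ consistently across the four-term expansions in stage two; I would organize each computation by grouping the two $L$-terms in $x$ together and the two in $y$ together, so that the reconstitution into $S$ or $T$ becomes immediate from \eqref{eq:STs}.
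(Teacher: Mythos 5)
Your proposal is correct and follows essentially the same route as the paper: translate the derivation and twisted-Leibniz statements into \eqref{eq:[SL]} and \eqref{eq:[TL]}, obtain these directly from \eqref{eq:FK1} by expanding $S$ and $T$ in terms of $L$ and using $\epsilon^2=1$, and then derive \eqref{eq:[ST]}, \eqref{eq:[TS]}, \eqref{eq:[TT]} formally from them; the sign bookkeeping you describe checks out.
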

\begin{proof}
The fact that $S(u,v)$ is a derivation and that $T(u,v)$ satisfies the equation above when applied to a
 product $xyz$ follow at once from \eqref{eq:FK1}. These conditions are equivalent to equations 
\eqref{eq:[SL]} and \eqref{eq:[TL]}. The other equations are obtained from these.
\end{proof}

\smallskip

\begin{remark} Let $U$ be a triple system satisfying equation \eqref{eq:FK1}, with 
$\epsilon\in\{\pm1\}$,  and denote by $L(U,U)$ (respectively $S(U,U)$, $T(U,U)$) the linear span of 
the operators $L(x,y)$ (respectively $S(x,y)$, $T(x,y)$) for $x,y\in U$. Lemma \ref{le:FK1} shows 
that $S(U,U)$ is a subalgebra of $L(U,U)$, and that the conditions $L(U,U)=S(U,U)+T(U,U)$, 
$[S(U,U),T(U,U)]\subseteq T(U,U)$ and $[T(U,U),T(U,U)]\subseteq S(U,U)$ hold. In particular, this 
shows that $S(U,U)\cap T(U,U)$ is an ideal of $L(U,U)$, and modulo this ideal we get a $\ZZ/2\ZZ$-graded 
Lie algebra.
\end{remark}

\smallskip

\begin{definition}\label{df:edFKTSspecial}
Let $U$ be an $(\epsilon,\delta)$ Freudenthal-Kantor triple system. Then $U$ is said to be 
\emph{special} in case
\begin{equation}\label{eq:special}
K(x,y)=\epsilon\delta L(y,x)-\epsilon L(x,y)
\end{equation}
holds for any $x,y\in U$.
\end{definition}

\smallskip

\begin{proposition}\label{pr:JT_special11}
The $J$-ternary algebras are exactly the special $(1,1)$ Freudenthal-Kantor triple systems.
\end{proposition}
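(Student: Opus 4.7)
The plan is to match each of the two axioms of a $J$-ternary algebra with a condition on the operators $L$ and $K$, and then to verify that the remaining Freudenthal--Kantor axiom \eqref{eq:FK2} follows from the others.

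First, I would rewrite Hein's axiom \eqref{eq:JTHein1} in operator form: subtracting $uv(xyz)$ from both sides and reading the resulting identity applied to $z$ yields
\[
[L(x,y),L(u,v)]=L(L(x,y)u,v)+L(u,L(y,x)v),
\]
which is precisely \eqref{eq:FK1} with $\epsilon=1$, after the relabeling $(u,v)\leftrightarrow(x,y)$. Next, interchanging $y$ and $z$ in \eqref{eq:JTHein2} gives $xzy-yzx=yxz-xyz$; the left-hand side is $K(x,y)z$ by the definition of $K$ (with $\delta=1$), while the right-hand side equals $(L(y,x)-L(x,y))z$. Hence \eqref{eq:JTHein2} is exactly the special condition \eqref{eq:special} with $\epsilon=\delta=1$. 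Both equivalences are reversible, so the axioms of a $J$-ternary algebra are equivalent to the conjunction of \eqref{eq:FK1} and the special condition.

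It remains to verify that, under \eqref{eq:FK1} and \eqref{eq:special} (with $\epsilon=\delta=1$), the second Freudenthal--Kantor axiom \eqref{eq:FK2} holds automatically. The special condition identifies $K(u,v)$ with the operator $T(u,v)=L(v,u)-L(u,v)$ of Lemma \ref{le:FK1}, which acts on the triple product as an anti-derivation:
\[
T(u,v)(abc)=(T(u,v)a)bc-a(T(u,v)b)c+ab(T(u,v)c).
\]
Evaluating \eqref{eq:FK2} at $z$, substituting $K=T$, and expanding each length-five triple product via \eqref{eq:JTHein1} together with the symmetric form $xyz+xzy=zxy+zyx$ of \eqref{eq:JTHein2}, the identity collapses by pairing terms with matching inner triple products.

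The main obstacle is the combinatorial bookkeeping: a direct expansion of \eqref{eq:FK2} produces well over a dozen length-five triple products, and matching them into cancelling pairs via \eqref{eq:JTHein2} requires patience. A more conceptual alternative, in the spirit of Section \ref{se:JternarySL2}, is to bypass the direct verification of \eqref{eq:FK2} by passing to the standard embedding: both axiomatic systems produce the same graded Lie algebra structure on the enveloping space built from $L$'s and $K$'s, so the equivalence follows from the uniqueness of this embedding.
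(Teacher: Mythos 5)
Your identification of the two $J$-ternary axioms with \eqref{eq:FK1} (for $\epsilon=1$) and with the special condition \eqref{eq:special} (equivalently $K=T$) is correct and is exactly how the paper begins; the easy direction of the equivalence then follows as you say. The problem is the converse: the whole substance of the proposition is the verification that \eqref{eq:FK2} is a consequence of \eqref{eq:JTHein1} and \eqref{eq:JTHein2}, and your proposal does not actually establish this. You assert that a brute-force expansion of \eqref{eq:FK2} "collapses by pairing terms", while conceding that the bookkeeping has not been carried out; an unexecuted cancellation claim is not a proof, and the paper's own argument shows that something more structured is needed. Concretely, the paper first applies the anti-derivation property of $T(u,v)$ to the product defining $K(x,z)$ (interchange $x$ and $z$ and subtract) and uses $K=T$ to obtain the operator identity \eqref{eq:KKs}, namely $K(u,v)K(x,z)+K(x,z)K(u,v)=K\bigl(K(u,v)x,z\bigr)+K\bigl(x,K(u,v)z\bigr)$; it then writes $L(y,x)=\frac{1}{2}\bigl(S(y,x)-K(y,x)\bigr)$ and combines \eqref{eq:[TS]} from Lemma \ref{le:FK1} with \eqref{eq:KKs} to compute $L(y,x)K(u,v)-K(u,v)L(x,y)=K\bigl(K(u,v)x,y\bigr)$, which is \eqref{eq:FK2}. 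Some version of these two intermediate identities (or an honest completion of your expansion) is what your write-up is missing.

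Your proposed "conceptual alternative" does not repair the gap. Passing to a standard embedding or enveloping graded Lie algebra presupposes that the $J$-ternary axioms already produce such a Lie algebra; but the construction in Theorem \ref{th:JTernarySL2} (and, more generally, the Lie-algebra constructions from $(1,1)$ Freudenthal--Kantor triple systems) uses precisely \eqref{eq:FK2} and its consequences such as \eqref{eq:KKs}. Invoking "uniqueness of this embedding" to deduce \eqref{eq:FK2} is therefore circular as stated. Either complete the direct operator computation along the lines above, or give an independent construction of the graded Lie algebra from the Hein axioms alone before comparing embeddings.
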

\begin{proof}
First note that, for $\epsilon=1$, \eqref{eq:JTHein1} coincides with \eqref{eq:FK1}, while for 
$\epsilon=\delta=1$
\eqref{eq:JTHein2} can be rewritten as 
\[
K(x,z)=T(x,z)
\]
due to \eqref{eq:LK} and \eqref{eq:STs}. Also, for $\epsilon=\delta=1$, a Freudenthal-Kantor triple system
is special if and only if $K(x,y)=T(x,y)$ for any $x,y$, because of \eqref{eq:special}.

Therefore, if $(U,xyz)$ is a special $(1,1)$ Freudenthal-Kantor triple system, \eqref{eq:JTHein1} follows
from \eqref{eq:FK1}, while \eqref{eq:JTHein2} follows from \eqref{eq:special}.

Conversely, if $\cT$ is a $J$-ternary algebra then, with $\epsilon=\delta=1$, \eqref{eq:FK1} is equivalent
to \eqref{eq:JTHein1}, \eqref{eq:special} is equivalent to \eqref{eq:JTHein2}. Besides, \eqref{eq:JTHein1} 
implies
\[
T(u,v)(xyz)=\bigl(T(u,v)x\bigr)yz-x\bigl(T(u,v)y\bigr)z+xy\bigl(T(u,v)z\bigr).
\]
Interchanging $x$ and $z$ above and subtracting we get
\[
T(u,v)K(x,z)=K\bigl(T(u,v)x,z\bigr)-K(x,z)T(u,v)+K\bigl(x,T(u,v)z\bigr),
\]
which, because $K(x,z)=T(x,z)$ by \eqref{eq:JTHein2}, becomes
\begin{equation}\label{eq:KKs}
K(u,v)K(x,z)+K(x,z)K(u,v)=K\bigl(K(u,v)x,z\bigr)+K\bigl(x,K(u,v)z\bigr),
\end{equation}
for all $x,y,z,u,v\in\cT$. Using that 
\[
L(x,y)=\frac{1}{2}\bigl(S(x,y)-T(x,y)\bigr)=\frac{1}{2}\bigl(S(x,y)-K(x,y)\bigr),
\]
we compute, as in \cite[Proof of Proposition 3.5]{EldOku_dicyclic}:
\[
\begin{split}
L(y,x)&K(u,v)-K(u,v)L(x,y)\\
&=\frac{1}{2}\Bigl(S(y,x)-K(y,x)\Bigr)K(u,v)-\frac{1}{2}K(u,v)\Bigl(S(x,y)-K(x,y)\Bigr)\\
&=\frac{1}{2}[S(x,y),K(u,v)]+\frac{1}{2}\Bigl(K(x,y)K(u,v)+K(u,v)K(x,y)\Bigr)\\
&=-\frac{1}{2}[K(u,v),S(x,y)]+\frac{1}{2}\Bigl(K(x,y)K(u,v)+K(u,v)K(x,y)\Bigr)\\
&=K\bigl(K(u,v)x,y\bigr)\quad\text{because of \eqref{eq:[TS]} and \eqref{eq:KKs},}
\end{split}
\]
thus obtaining that $\cT$ also satisfies \eqref{eq:FK2}.
\end{proof}

\smallskip

Recall that given an associative algebra $\cA$, its associated Jordan algebra $\cA^{(+)}$ is the
algebra defined on the vector space $\cA$ with multiplication $a\cdot b=\frac{1}{2}(ab+ba)$ for 
$a,b\in\cA$.

\begin{theorem}\label{th:JTHein_Allison}
Let $\cJ$ be a unital Jordan algebra and $\cT$ a $\cJ$-ternary algebra (Definition \ref{df:JTAllison}). 
Then $\cT$ is a $J$-ternary algebra (Definition \ref{df:JTHein}).

Conversely, let $\cT$ be a $J$-ternary algebra, then the subspace $\cJ\bydef \FF\id+K(\cT,\cT)$ is
a Jordan subalgebra of $\End_\FF(\cT)^{(+)}$, where
$K(x,y)z=xzy-yzx$ for $x,y\in\cT$ and $K(\cT,\cT)=\espan{K(x,y)\mid x,y\in\cT}$, and $\cT$ becomes a
$\cJ$-ternary algebra with the natural action of $J$ on $\cT$: $a\bullet x=a(x)$, and the multilinear maps:
\[
\begin{split}
\ptriple{.,.,.}:\cT\times \cT\times \cT&\rightarrow \cT,\quad \ptriple{x,y,z}=xyz,\\
\langle .\mid .\rangle: \cT\times \cT&\rightarrow \cJ,\quad \langle x\mid y\rangle =-K(x,y).
\end{split}
\]
\end{theorem}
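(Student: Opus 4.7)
The plan is to treat the two implications separately, using Proposition~\ref{pr:JT_special11} freely to move between the Hein formulation and the special $(1,1)$ Freudenthal--Kantor viewpoint.

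For the forward implication, assume $\cT$ is a $\cJ$-ternary algebra in the sense of Definition~\ref{df:JTAllison}. The identity \eqref{eq:JTHein1} is literally \eqref{eq:JTAllison6}. For \eqref{eq:JTHein2}, identity \eqref{eq:JTAllison3} yields $xyz - zyx = -\langle x\mid z\rangle \bullet y$, while applying \eqref{eq:JTAllison4} after swapping the roles of $x$ and $z$ and invoking the skew-symmetry of $\langle\cdot\mid\cdot\rangle$ gives $zxy - xzy = -\langle x\mid z\rangle \bullet y$; comparing the two yields the required equality.

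For the reverse implication, let $\cT$ satisfy Definition~\ref{df:JTHein}. By Proposition~\ref{pr:JT_special11} and the calculation in its proof, $\cT$ is a special $(1,1)$ Freudenthal--Kantor triple system, so $K(x,y) = T(x,y)$ as operators, Lemma~\ref{le:FK1} applies, and the key identity \eqref{eq:KKs} is available. The subspace $\cJ \bydef \FF\id + K(\cT,\cT)$ is closed under the symmetrized product of $\End_\FF(\cT)$: $\id$ is a unit, and \eqref{eq:KKs} rewrites $K(u,v)\cdot K(x,z) = \tfrac12\bigl(K(K(u,v)x, z) + K(x, K(u,v)z)\bigr)$, which lies in $K(\cT,\cT)$. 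Hence $\cJ$ is a Jordan subalgebra of $\End_\FF(\cT)^{(+)}$, and $\cT$ becomes a Jordan module under the operator action $a\bullet x = a(x)$. Of the six axioms of Definition~\ref{df:JTAllison}, \eqref{eq:JTAllison6} is \eqref{eq:JTHein1}, while \eqref{eq:JTAllison3} and \eqref{eq:JTAllison4} become, after unwinding $\langle x\mid z\rangle = -K(x,z)$, the definition of $K$ and a relabeling of \eqref{eq:JTHein2}. By linearity it suffices to check \eqref{eq:JTAllison1} and \eqref{eq:JTAllison2} for $a = \id$ (trivial) and $a = K(u,v)$; the first case is then exactly \eqref{eq:KKs}, and the second is the derivation-type formula for $T(u,v) = K(u,v)$ on a triple product provided by Lemma~\ref{le:FK1}.

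The main obstacle is \eqref{eq:JTAllison5}. After unwinding signs it becomes the operator identity
\[
K(xyz, w) + K(z, xyw) + K(x, K(z,w)y) = 0.
\]
The plan is to exploit the antisymmetry $K(a,b) = -K(b,a)$ and apply \eqref{eq:FK2} to rewrite the third term as $-L(x,y)K(z,w) + K(z,w)L(y,x)$; then, evaluating both sides on an arbitrary $v\in\cT$ and expanding $xy(zvw)$ and $xy(wvz)$ via \eqref{eq:JTHein1}, the terms containing $z(yxv)w$ and $w(yxv)z$ cancel in pairs, and both sides collapse to $(xyz)vw - (xyw)vz + zv(xyw) - wv(xyz)$. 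This is the only step that requires a genuine calculation, and all the ingredients are already provided by Lemma~\ref{le:FK1}, \eqref{eq:FK1} and \eqref{eq:FK2}.
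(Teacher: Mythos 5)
Your proposal is correct and follows essentially the same route as the paper: both directions reduce to Proposition~\ref{pr:JT_special11} and Lemma~\ref{le:FK1}, with \eqref{eq:JTAllison1} coming from \eqref{eq:KKs}, \eqref{eq:JTAllison2} from the derivation-type formula for $T(u,v)=K(u,v)$, and \eqref{eq:JTAllison5} from expanding via \eqref{eq:JTHein1} and invoking \eqref{eq:FK2} together with the skew-symmetry of $K$. The only difference is cosmetic: the paper first derives the operator identity $K(L(x,y)z,w)+K(z,L(x,y)w)=L(x,y)K(z,w)-K(z,w)L(y,x)$ and then applies \eqref{eq:FK2}, whereas you apply \eqref{eq:FK2} first and verify the resulting identity by the same expansion.
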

\begin{proof}
If $\cT$ is a $\cJ$-ternary algebra for a Jordan algebra $\cJ$, then with $xyz=\ptriple{x,y,z}$, equation
\eqref{eq:JTAllison6} becomes \eqref{eq:JTHein1}, while equations \eqref{eq:JTAllison3} and 
\eqref{eq:JTAllison4} give:
\[
xyz-zyx=-\langle x\mid z\rangle\bullet y=zxy-xzy,
\]
which gives \eqref{eq:JTHein2}.

Conversely, let $\cT$ be a $J$-ternary algebra. Then the subspace $\cJ=\FF \id+K(\cT,\cT)$ is a Jordan
subalgebra of $\End_\FF(\cT)^{(+)}$ because of \eqref{eq:KKs}, with multiplication 
$f\cdot g=\frac{1}{2}(fg+gf)$, and $\cT$ becomes a special
Jordan module with $a\bullet x=a(x)$ for $a\in\cJ\subseteq\End_\FF(\cT)$ and $x\in\cT$. 
Now, with $\ptriple{x,y,z}=xyz$, and
$\langle x\mid y\rangle =-K(x,y)\in\cJ$, \eqref{eq:JTAllison6} is just \eqref{eq:JTHein1},
 \eqref{eq:JTAllison3} is the definition of $\langle x\mid z\rangle$ and \eqref{eq:JTAllison4} follows
from \eqref{eq:JTHein2}. Also, \eqref{eq:KKs} gives \eqref{eq:JTAllison1}, and equation \eqref{eq:[TL]}
gives \eqref{eq:JTAllison2} because $K(x,y)=T(x,y)$ for any $x,y$, as $\cT$ is a special $(1,1)$ 
Freudenthal-Kantor triple system. 
Finally, \eqref{eq:JTHein1} gives, for any $x,y,z,w,u$:
\[
\begin{split}
L(x,y)K(z,w)u&=L(x,y)\bigl(zuw-wuz)\\
&=K\bigl(L(x,y)z,w\bigr)u+K\bigl(z,L(x,y)w\bigr)u+K(z,w)L(y,x)u,
\end{split}
\]
so that
\[
K\bigl(L(x,y)z,w\bigr)+K\bigl(z,L(x,y)w\bigr)=L(x,y)K(z,w)-K(z,w)L(y,x).
\]
Hence, \eqref{eq:FK2} (with $\epsilon=1$) can be written as follows:
\[
K\bigl(K(z,w)y,x\bigr)=K\bigl(L(x,y)z,w\bigr)+K\bigl(z,L(x,y)w\bigr),
\]
which gives \eqref{eq:JTAllison5}, because of the skew-symmetry of $K$.
\end{proof}

\smallskip

It is time to provide the most important class of examples of $J$-ternary algebras (see \cite[Proposition 4.1]{BCCE} and references there in).

\begin{example}[Prototypical example]\label{ex:prototypical}
Let $(\cA,*)$ be a unital associative algebra with involution and let  $\cT$ be a left $\cA$-module 
endowed with a skew-hermitian form 
$\hup:\cT\times\cT\rightarrow \cA$. Then $\cT$ is a $\cJ$-ternary algebra, where $\cJ=\cH(\cA,*)$ is 
the Jordan
algebra of symmetric elements of $\cA$ relative to the involution $*$,  with 
the following operations: 
\begin{itemize}
\item $a\cdot b=\frac{1}{2}(ab+ba)$ for any $a,b\in\cJ=\cH(\cA,*)$,
\item $a\bullet x=ax$ for any $a\in\cJ$ and $x\in\cT$,
\item $\langle x\mid y\rangle = \hup(x,y)-\hup(y,x)$ for $x,y\in\cT$, and
\item $\ptriple{x,y,z}=\hup(x,y)z+\hup(z,x)y+\hup(z,y)x$ for $x,y,z\in\cT$.
\end{itemize}
\end{example}

\smallskip

The $J$-ternary algebras are tightly related with the Lie algebras with short $\SLs_2$-structures. This connection
has been dealt with in \cite{BCCE}, and references there in, assuming the characteristic of $\FF$ is
not $2$ nor $3$. However, due to \cite[Remark 2.3]{BCCE}, most of the arguments are valid too
in characteristic $3$. 

First the necessary definitions, following \cite{BCCE}:

\begin{definition}[{\cite[Definition 0.1]{Vinberg}}]\label{df:Sstructure}
Let $\mathbf{S}$ be a reductive algebraic group and let $\cL$ be a Lie algebra. An 
\emph{$\mathbf{S}$-structure} on $\cL$ is a homomorphism $\Phi:\mathbf{S}\rightarrow\AAut(\cL)$ 
from $\mathbf{S}$ into the algebraic group of automorphisms of $\cL$.

The $\mathbf{S}$-structure $\Phi:\mathbf{S}\rightarrow \AAut(\cL)$ on the Lie algebra $\cL$ is said to 
be \emph{inner} if there is a one-to-one Lie algebra homomorphism 
$\iota:\frs\hookrightarrow \cL$, where $\frs$ is the Lie algebra of $\mathbf{S}$,
 such that the following diagram commutes, where $\textup{d}\Phi$ denotes
the differential of $\Phi$: 
\begin{equation}\label{eq:iota}
\begin{tikzcd}
\frs\arrow[rd,"\textup{d}\Phi"']\arrow[r, hook,"\iota"]&\cL\arrow[d,"\ad"]\\
&\Der(\cL)
\end{tikzcd}
\end{equation}
\end{definition}

Note that if $\Phi:\mathbf{S}\rightarrow \AAut(\cL)$ is a non-inner $\mathbf{S}$-structure on the 
Lie algebra $\cL$, then we can take the split extension $\tilde\cL=\cL\oplus\frs$ as in 
\cite[p.~18]{Jacobson}, where $\frs$ acts on $\cL$ through $\textup{d}\Phi$, and this is endowed with a natural $\mathbf{S}$-structure. Hence, it is not harmful to restrict to inner 
$\mathbf{S}$-structures.

\smallskip

\begin{definition}\label{df:shortA1}
An $\SLs_2$-structure $\Phi:\SLs_2\rightarrow \AAut(\cL)$ on a Lie algebra $\cL$ is said to be \emph{short} if $\cL$ decomposes, as a module for $\SLs_2$ via $\Phi$, into a direct sum of
copies of the adjoint, natural, and trivial modules.
\end{definition}

Therefore, the isotypic decomposition of $\cL$ allows us to describe $\cL$ as follows:
\begin{equation}\label{eq:shortA1_isotypic}
\cL=\bigl(\frsl(V)\otimes \cJ\bigr)\oplus \bigl(V\otimes\cT\bigr)\oplus\cD,
\end{equation}
for vector spaces $\cJ$, $\cT$, and $\cD$, where $V$ is the natural two-dimensional representation of 
$\SLs_2\simeq\SLs(V)$. The action of $\SLs_2$ is given by the adjoint action of $\SLs_2$ on 
$\frsl(V)$, its natural action on $V$, and the trivial action on $\cJ$, $\cT$ and $\cD$. 
The subspace $\cD$, being the subspace of fixed elements by $\SLs_2$, is a subalgebra of $\cL$.

\smallskip

\begin{lemma}\label{le:sl2}
Let $V$ be a two-dimensional vector space.
\begin{romanenumerate}
\item 
The space $\Hom_{\frsl(V)}\bigl(\frsl(V)\otimes\frsl(V),\frsl(V)\bigr)$ of $\frsl(V)$-invariant linear maps  
$\frsl(V)\otimes\frsl(V)\rightarrow\frsl(V)$ is spanned by the (skew-symmetric) Lie bracket:
\[
f\otimes g\mapsto [f,g]=fg-gf.
\]
\item 
The space $\Hom_{\frsl(V)}\bigl(\frsl(V)\otimes\frsl(V),\FF\bigr)$ is spanned by the trace map:
\[
f\otimes g\mapsto \trace(fg).
\]
\item 
The space $\Hom_{\frsl(V)}\bigl(\frsl(V)\otimes V,V\bigr)$ is spanned by the natural action:
\[
f\otimes v\mapsto f(v).
\]
\item 
The space $\Hom_{\frsl(V)}\bigl(V\otimes V,\FF\bigr)$ is one-dimensional. Its nonzero elements
are of the form
\[
u\otimes v\mapsto (u\mid v),
\]
for a nonzero skew-symmetric bilinear form $(.\mid .)$ on $V$. 
\item
The space $\Hom_{\frsl(V)}\bigl(V\otimes V,\frsl(V)\bigr)$ is one-dimensional. Once a nonzero
skew-symmetric bilinear form $(.\mid .)$ is fixed on $V$, this subspace is spanned by the following symmetric map:
\[
u\otimes v\mapsto \gamma_{u,v}\bigl(: w\mapsto (u\mid w)v+(v\mid w)u\bigr).
\]
\item
The spaces $\Hom_{\frsl(V)}\bigl(\frsl(V)\otimes\frsl(V),V\bigr)$, 
$\Hom_{\frsl(V)}\bigl(\frsl(V)\otimes V,\frsl(V)\bigr)$,  \\
$\Hom_{\frsl(V)}\bigl(\frsl(V)\otimes V,\FF\bigr)$,  and
$\Hom_{\frsl(V)}\bigl(V\otimes V,V\bigr)$ are all trivial.
\end{romanenumerate}

Moreover, $\Hom_{\frsl(V)}$ may be replaced by $\Hom_{\SLs(V)}$ all over.
\end{lemma}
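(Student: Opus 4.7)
The plan is to reduce each $\Hom$ space to invariants in a low tensor power of $V$, and then to exhibit explicit spanning maps. I would first fix a symplectic basis $\{e,f\}$ of $V$ with $(e\mid f)=1$, giving an $\SLs(V)$-equivariant isomorphism $V\cong V^*$, $v\mapsto(v\mid\cdot)$. In any characteristic $\neq 2$ there is a decomposition $V\otimes V=S^2V\oplus\Lambda^2V$, with $\Lambda^2V$ the one-dimensional trivial module (spanned by $e\otimes f-f\otimes e$) and $S^2V$ a three-dimensional simple module. The map $uv\mapsto\gamma_{u,v}$ is well defined (since $\gamma_{u,v}$ is symmetric in $u,v$) and $\SLs(V)$-equivariant, and its image lies in $\frsl(V)$ as $\gamma_{u,v}$ has trace zero; this yields an isomorphism $S^2V\cong\frsl(V)$. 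Hence $\frsl(V)$ is a direct summand of $V\otimes V$, and the projection onto it is, up to scalar, $u\otimes v\mapsto\gamma_{u,v}$.

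For the four vanishing claims in (vi), using the embedding $\frsl(V)\hookrightarrow V\otimes V$ and the self-duality $V\cong V^*$, each such $\Hom$ space embeds in $(V^{\otimes n})^{\SLs(V)}$ with $n$ odd, and these invariants vanish because every weight of $V^{\otimes n}$ has the same parity as $n$, so there are no weight-zero vectors. For (iv), $\Hom_{\SLs(V)}(V\otimes V,\FF)\cong(V\otimes V)^{\SLs(V)}\cong\Lambda^2V$ is one-dimensional, spanned by $(\cdot\mid\cdot)$. For (v), the decomposition $V\otimes V=\frsl(V)\oplus\FF$ combined with Schur's lemma (applied to the simple module $\frsl(V)$) gives a one-dimensional $\Hom$ space, spanned by the projection $u\otimes v\mapsto\gamma_{u,v}$. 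For items (i)--(iii) I would argue by highest-weight-vector analysis: any $\frsl(V)$-equivariant map is determined by its values on the highest-weight vectors of the source, and each value must be a vector of the same weight in the target annihilated by the positive root element $E$. A direct computation locates highest-weight vectors in $\frsl(V)\otimes\frsl(V)$ of weights $4,2,0$ and in $\frsl(V)\otimes V$ of weights $3,1$ (each line one-dimensional), while $\FF$, $V$, $\frsl(V)$ have highest weights $0,1,2$. Matching weights produces at most one-dimensional $\Hom$ spaces in (i)--(iii), and exhibiting the Lie bracket, the trace form, and the natural action as nonzero $\frsl(V)$-equivariant maps identifies them as the generators.

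The main delicate point is that in characteristic $3$ the tensor products $\frsl(V)\otimes V$ and $\frsl(V)\otimes\frsl(V)$ may fail to be completely reducible as $\SLs(V)$-modules, so one cannot invoke the classical Clebsch--Gordan decomposition. Even so, the highest-weight analysis still yields the correct upper bound for each $\Hom$ dimension, since the constraint on the image of each highest-weight vector of the source is determined by the weight structure of the target (which is unchanged), and this bound is matched by the exhibited maps. Finally, the \emph{moreover} assertion follows because $\SLs(V)$ is a connected linear algebraic group with Lie algebra $\frsl(V)$, so invariance under the group and invariance under its Lie algebra coincide.
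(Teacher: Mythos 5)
Your treatment of (iv), (v) and (vi) is fine (the splitting $V\otimes V=S^2V\oplus\Lambda^2V$ with $S^2V\cong\frsl(V)$ simple, and the odd-weight parity argument, work in characteristic $3$), but the argument you give for (i)--(iii) has a genuine gap precisely in characteristic $3$, which is the only case where the lemma is not classical (the paper handles $\charac\FF\neq 2,3$ by citing \cite[Lemma 2.1]{EldOku_dicyclic} and characteristic $3$ by citing \cite[Remark 2.3]{BCCE}). The principle you invoke --- that an $\frsl(V)$-equivariant map is determined by its values on the highest-weight vectors of the source --- is valid only when the source is generated by its highest-weight vectors, and in characteristic $3$ neither $\frsl(V)\otimes\frsl(V)$ nor $\frsl(V)\otimes V$ is. Concretely, with the standard basis $e,h,f$ of $\frsl(V)$, the highest-weight vectors of $\frsl(V)\otimes\frsl(V)$ are, up to scalar, $e\otimes e$, $e\otimes h-h\otimes e$ and $w_0=h\otimes h-e\otimes f-f\otimes e$; since $f^{3}\cdot(e\otimes e)=6(f\otimes h+h\otimes f)$ and $f\cdot w_0=3(f\otimes h+h\otimes f)$ both vanish in characteristic $3$, the submodule they generate is only $7$-dimensional (it contains neither $h\otimes h$ nor $f\otimes h+h\otimes f$). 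Worse, the trace map of item (ii) --- a nonzero equivariant map --- vanishes identically on that submodule: it kills all nonzero-weight vectors, and on the weight-zero part one has $\trace(h^2)-2\trace(ef)=2-2=0$ and $\trace(ef)-\trace(fe)=0$. So in characteristic $3$ an equivariant map is \emph{not} determined by its values on highest-weight vectors, and the upper bounds you claim for (i)--(iii) are not established by this argument (the same issue occurs for $\frsl(V)\otimes V$, where $f^{3}$ kills the weight-$3$ highest-weight vector, so the submodule generated by the highest-weight vectors is only $3$-dimensional out of $6$).

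The conclusion is nevertheless correct, and the gap can be repaired inside your framework. Since $\FF$, $V$ and $\frsl(V)$ are all self-dual, one has $\Hom_{\frsl(V)}(M,N)\cong\Hom_{\frsl(V)}(N,M)$ for the modules involved, and on the transposed side the source is trivial or irreducible, hence generated by its highest-weight vector; the spaces of $E$-annihilated vectors of the matching weight in the target are one-dimensional (weights $2$ and $0$ in $\frsl(V)\otimes\frsl(V)$, weight $1$ in $\frsl(V)\otimes V$), which gives the bound $\leq 1$ in (i)--(iii), attained by the bracket, the trace form and the natural action. Alternatively, a short direct computation of the weight-zero vectors killed by $e$ and $f$ in $\frsl(V)^{\otimes 3}$, $\frsl(V)^{\otimes 2}$ and $\frsl(V)\otimes V\otimes V$ gives the same answer. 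Finally, your justification of the ``moreover'' clause is also too quick: in characteristic $p$ it is false that invariance under a connected algebraic group coincides with invariance under its Lie algebra (Frobenius twists of $\SLs(V)$-modules are $\frsl(V)$-trivial but not $\SLs(V)$-trivial); what you need, and what does hold, is the inclusion $\Hom_{\SLs(V)}\subseteq\Hom_{\frsl(V)}$ together with the observation that the exhibited spanning maps are visibly $\SLs(V)$-equivariant.
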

\begin{proof}
This is well known if the characteristic of $\FF$ is $\neq 2,3$ (see, e.g., 
\cite[Lemma 2.1]{EldOku_dicyclic}), but it remains valid in characteristic $3$ by 
\cite[Remark 2.3]{BCCE}.
\end{proof}

Let $\cL$ be a Lie algebra with an inner $\SLs_2$-structure and isotypic decomposition as in 
\eqref{eq:shortA1_isotypic}. The $\SLs_2$-structure being inner forces $\cJ$ to contain a distinguished element $1$, such that $\frsl(V)\otimes 1$ is the image of $\iota$ in 
\eqref{eq:iota}.

The $\SLs_2$-invariance or, equivalently, the $\frsl(V)$-invariance, of the Lie bracket in our Lie algebra 
$\cL$ gives, for any $f,g\in\frsl(V)$, $u,v\in V$, $a,b\in\cJ$, $x,y\in\cT$, and $D\in \cD$, the 
following conditions:
\begin{equation}\label{eq:shortA1bracket}
\begin{split}
[f\otimes a,g\otimes b]&=[f,g]\otimes a\cdot b+2\trace(fg)D_{a,b},\\
[f\otimes a,u\otimes x]&=f(u)\otimes a\bullet x,\\
[u\otimes x,v\otimes y]&=\gamma_{u,v}\otimes \langle x\mid y\rangle 
                                              +\bigl(u\mid v\bigr)d_{x,y},\\
[D,f\otimes a]&=f\otimes D(a),\\
[D,u\otimes x]&=u\otimes D(x),
\end{split}
\end{equation}
for suitable $\cD$-invariant bilinear maps
\begin{equation}\label{eq:shortA1maps}
\begin{split}
\cJ\times \cJ\rightarrow \cJ:&\quad (a,b)\mapsto a\cdot b\quad\text{(symmetric),}\\
\cJ\times \cJ\rightarrow \cD:&\quad (a,b)\mapsto D_{a,b}\quad\text{(skew-symmetric),}\\
\cJ\times \cT\rightarrow \cT:&\quad (a,x)\mapsto a\bullet x,\\
\cT\times \cT\rightarrow \cJ:&\quad (x,y)\mapsto \langle x\mid y\rangle\quad
                     \text{(skew-symmetric),}\\
\cT\times \cT\rightarrow \cD:&\quad (x,y)\mapsto d_{x,y}\quad\text{(symmetric),}\\
\cD\times \cJ\rightarrow \cJ:&\quad (D,a)\mapsto D(a),\\
\cD\times \cT\rightarrow \cT:&\quad (D,x)\mapsto D(x),
\end{split}
\end{equation}
such that 
\begin{equation}\label{eq:D1a}
1\cdot a=a,\quad D_{1,a}=0,\quad\text{and}\quad 1\bullet x=x,
\end{equation}
for any $a\in \cJ$ and $x\in\cT$.

The Jacobi identity on $\cL$ also shows that all these maps are invariant under the action of the Lie subalgebra 
$\cD$. The next result summarizes the properties of these maps:

\begin{theorem}\label{th:ShortA1Properties}
A Lie algebra $\cL$ is endowed with an inner short $\SLs_2$-structure if and only if there is a 
two-dimensional vector space $V$ such that $\cL$ is, up to isomorphism, the Lie algebra in \eqref{eq:shortA1_isotypic}, with Lie bracket given in \eqref{eq:shortA1bracket}, for suitable 
bilinear maps given in \eqref{eq:shortA1maps}, satisfying the following conditions:
\begin{itemize}
\item
$\cJ$ is a unital commutative algebra with the multiplication $a\cdot b$.
\item
The following equations hold for $a,b\in\cJ$ and $x\in \cT$:
\begin{equation}\label{eq:bullet}
1\bullet x=x,\quad (a\cdot b)\bullet x=\frac{1}{2}\bigl(a\bullet(b\bullet x)+b\bullet(a\bullet x)\bigr).
\end{equation}
\item
For any $a,b,c\in\cJ$ and $x,y,z\in\cT$, the following identities hold:
\begin{subequations}
\begin{gather}
D_{a,b}(c)=a\cdot (b\cdot c)-b\cdot (a\cdot c),\label{eq:Dab}\\
D_{a\cdot b,c}+D_{b\cdot c,a}+D_{c\cdot a,b}=0,\label{eq:Dabc}\\
4D_{a,b}(x)=a\bullet(b\bullet x)-b\bullet(a\bullet x),\label{eq:Dabx}\\
4D_{a,\langle x\mid y\rangle}=-d_{a\bullet x,y}+d_{x,a\bullet y},\label{eq:Dd}\\
2a\cdot \langle x\mid y\rangle=\langle a\bullet x\mid y\rangle +\langle x \mid a\bullet y\rangle,
\label{eq:axy}\\
d_{x,y}(a)=\langle a\bullet x\mid y\rangle -\langle x\mid a\bullet y\rangle,\label{eq:dxya}\\
d_{x,y}(z)-d_{z,y}(x)=\langle x\mid y\rangle \bullet z-\langle z\mid y\rangle\bullet x
  + 2\langle x\mid z\rangle\bullet y.\label{eq:dxyz}
\end{gather}
\end{subequations}
\item
For any $D\in\cD$, the linear endomorphism of $\cJ\oplus \cT$, given by $a+x\mapsto D(a)+D(x)$, for
$a\in\cJ$ and $x\in\cT$, is an even derivation of the $\ZZ/2\ZZ$-graded algebra with even part $\cJ$, odd part $\cT$, and multiplication given by the formula:
\begin{equation}\label{eq:J+T}
(a+x)\diamond(b+y)=\bigl(a\cdot b+\langle x\mid y\rangle\bigr)+\bigl(a\bullet y+b\bullet x\bigr),
\end{equation}
for $a,b\in\cJ$ and $x,y\in\cT$.\qed
\end{itemize}
\end{theorem}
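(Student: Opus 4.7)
The plan is to use Lemma~\ref{le:sl2} to pin down the form of every $\SLs_2$-equivariant bilinear map $\cL\otimes\cL\to\cL$, and then extract the listed identities from the Jacobi identity, case by case.

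For the forward direction, suppose $\cL$ carries an inner short $\SLs_2$-structure with isotypic decomposition \eqref{eq:shortA1_isotypic}. Since $[\cdot,\cdot]$ is $\SLs_2$-equivariant, its restriction to each pair of isotypic components is determined, via Lemma~\ref{le:sl2}, by the shape displayed in \eqref{eq:shortA1bracket}. The symmetry and skew-symmetry of the bilinear maps in \eqref{eq:shortA1maps} then follow from the skew-symmetry of $[\cdot,\cdot]$ combined with the (skew-)symmetry properties of the commutator on $\frsl(V)$, the trace form, the symplectic form on $V$, and the symmetric map $\gamma_{u,v}$. Innerness of $\Phi$ provides a distinguished $1\in\cJ$ such that $\iota(f)=f\otimes 1$ for all $f\in\frsl_2=\frsl(V)$; comparing $[f\otimes 1,-]$ with the $\SLs_2$-action on each isotypic component then yields \eqref{eq:D1a} (and in particular $1\cdot 1=1$).

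The remaining identities \eqref{eq:bullet} and \eqref{eq:Dab}--\eqref{eq:dxyz} will emerge by applying the Jacobi identity $[X,[Y,Z]]+\textup{c.p.}=0$ to triples $(X,Y,Z)$ drawn from the three isotypic components and projecting onto each target. A triple of type $(\frsl(V)\otimes\cJ,\frsl(V)\otimes\cJ,\frsl(V)\otimes\cJ)$ should produce \eqref{eq:Dab} (projection to $\frsl(V)\otimes\cJ$, after fixing $f,g\in\frsl(V)$ with $\tr(fg)\neq 0$) and \eqref{eq:Dabc} (projection to $\cD$). A triple of type $(\frsl(V)\otimes\cJ,\frsl(V)\otimes\cJ,V\otimes\cT)$ should give \eqref{eq:bullet} and \eqref{eq:Dabx}. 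A triple of type $(\frsl(V)\otimes\cJ,V\otimes\cT,V\otimes\cT)$ should yield \eqref{eq:axy}, \eqref{eq:dxya}, and \eqref{eq:Dd}, separating according to the $\frsl(V)\otimes\cJ$ and $\cD$ components of the target. A triple of type $(V\otimes\cT,V\otimes\cT,V\otimes\cT)$ should produce \eqref{eq:dxyz}, invoking the identity $(u\mid v)w+(v\mid w)u+(w\mid u)v=0$ valid on the two-dimensional symplectic space $V$. Finally, any Jacobi identity with at least one entry in $\cD$ amounts exactly to the statement that the action of $\cD$ on $\cJ$ and on $\cT$ is by derivations of the $\ZZ/2$-graded product $\diamond$ of \eqref{eq:J+T}, i.e., the last bulleted condition of the theorem.

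The converse direction then follows by reversing these computations: given the data satisfying the listed identities, the bracket \eqref{eq:shortA1bracket} is $\SLs_2$-equivariant and skew-symmetric by construction, and Jacobi holds on each triple by the projection analysis above; the embedding $f\mapsto f\otimes 1$ provides the required $\iota$. The main obstacle I anticipate is the careful bookkeeping of normalizations: the constants $2$ and $4$ appearing in \eqref{eq:shortA1bracket}, \eqref{eq:Dabx}, and \eqref{eq:Dd} arise from the interplay of $\tr$, the symplectic form on $V$, and the symmetrization $\gamma_{u,v}$, and one must verify throughout that no factor of $3$ is ever needed in a denominator. This is precisely the content of \cite[Remark~2.3]{BCCE}, which is what allows Lemma~\ref{le:sl2}, and hence the whole argument, to go through in characteristic $3$ as well.
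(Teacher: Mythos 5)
Your plan is correct and is essentially the paper's own argument: the paper simply cites \cite[Theorem 2.2]{EldOku_dicyclic}, whose proof is exactly this case-by-case analysis (equivariance via Lemma~\ref{le:sl2} to fix the shape of the bracket, then the Jacobi identity on triples from the isotypic components, projected onto each summand), together with the observation that Lemma~\ref{le:sl2} survives in characteristic $3$ by \cite[Remark 2.3]{BCCE} and that no denominator $3$ ever occurs, which is precisely the point you flag at the end. The only caveat the paper adds, consistent with your write-up, is that in characteristic $3$ the identity \eqref{eq:Dabc} no longer forces $\cJ$ to be a Jordan algebra, which is why the theorem only claims $\cJ$ is unital commutative.
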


\begin{proof}
The proof in \cite[Theorem 2.2]{EldOku_dicyclic}, where the characteristic is assumed to be $\neq 2,3$,
works word for word in characteristic $3$ too, with the only difference that \eqref{eq:Dabc} 
does not imply the algebra $\cJ$ to be a Jordan algebra. 

Note that if the characteristic is  $\neq 2,3$, \eqref{eq:Dabc} is equivalent
to the condition $D_{a\cdot a,a}=0$ for any $a\in\cJ$, and hence \eqref{eq:Dab}, with 
$b=a^{\cdot 2}$, gives 
$a^{\cdot 2}\cdot(c\cdot a)=(a^{\cdot 2}\cdot c)\cdot a$,  due to the commutativity of $a\cdot b$. 
It follows that $\cJ$ is a Jordan algebra in this case.  In characteristic $3$, we get that $\cJ$ only
satisfies the complete linearization of the Jordan identity, but not necessarily the Jordan identity.
\end{proof}

\smallskip

\begin{theorem}\label{th:JTernarySL2}
Let $\cL$ be a Lie algebra endowed with an inner short $\SLs_2$-structure, with isotypic decomposition
as in \eqref{eq:shortA1_isotypic} and bracket in \eqref{eq:shortA1bracket}. Then $\cT$ is a $J$-ternary
algebra with the triple product
\begin{equation}\label{eq:xyzdxy}
\ptriple{x,y,z}\bydef \frac{1}{2}\bigl(d_{x,y}(z)-\langle x\mid y\rangle\bullet z\bigr)
\end{equation}
for $x,y,z\in\cT$.

Conversely, let $\cT$ be a $J$-ternary algebra (that is, a special (1,1) Freudenthal-Kantor triple system)
 and consider the Jordan algebra $\cJ=\FF\id +K(\cT,\cT)$
as in Theorem \ref{th:JTHein_Allison}, then the direct sum
\begin{equation}\label{eq:L(T)}
\cL(\cT)\bydef \bigl(\frsl(V)\otimes \cJ)\oplus\bigl( V\otimes \cT\bigr)\oplus S(\cT,\cT)
\end{equation}
is a Lie algebra, with an inner $\SLs_2$-structure, with the Lie bracket given by the bracket in 
\begin{equation}\label{eq:STT}
S(\cT,\cT)=\espan{S(x,y)=L(x,y)+L(y,x)\mid x,y\in\cT}\leq\frgl(\cT)
\end{equation}
 and the following equations:
\begin{equation}\label{eq:L(T)bracket}
\begin{split}
[f\otimes a,g\otimes b]&=[f,g]\otimes \frac{1}{2}(ab+ba)\, +\, \frac{1}{2}\trace(fg)[a,b],\\
[f\otimes a,u\otimes x]&=f(u)\otimes a(x),\\
[u\otimes x,v\otimes y]&=\gamma_{u,v}\otimes K(x,y)\,+\, (u\vert v)S(x,y)\\
[\varphi,f\otimes a]&=f\otimes [\varphi,a],\\
[\varphi,u\otimes x]&=u\otimes \varphi(x),
\end{split}
\end{equation}
for any $f,g\in\frsl(V)$, $u,v\in V$, $a,b\in \cJ$, $x,y\in \cT$ and $\varphi\in S(\cT,\cT)$.
\end{theorem}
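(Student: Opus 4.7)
My plan is to prove both directions using Proposition \ref{pr:JT_special11} (which identifies $J$-ternary algebras with special $(1,1)$ Freudenthal-Kantor triple systems) and Lemma \ref{le:sl2} (which controls, up to scalars, every $\SLs(V)$-equivariant map between the isotypic pieces appearing in \eqref{eq:shortA1_isotypic}).

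For the forward direction, I would start with $\cL$ carrying an inner short $\SLs_2$-structure, define $L(x,y)z\bydef\ptriple{x,y,z}$ by \eqref{eq:xyzdxy}, and substitute into the defining identity $K(x,y)z=xzy-yzx$ from \eqref{eq:LK}. Using the symmetry of $d_{\cdot,\cdot}$, the skew-symmetry of $\langle\cdot\mid\cdot\rangle$, and identity \eqref{eq:dxyz}, this collapses to $K(x,y)z=L(y,x)z-L(x,y)z$, which is exactly the special condition \eqref{eq:special}. For the Freudenthal-Kantor axiom \eqref{eq:FK1}, the last bullet of Theorem \ref{th:ShortA1Properties} states that every $D\in\cD$ is a derivation of the $\ZZ/2$-graded algebra on $\cJ\oplus\cT$ defined by \eqref{eq:J+T}; applied to $D=d_{x,y}$ and combined with \eqref{eq:Dabx}, \eqref{eq:Dd} and \eqref{eq:dxya}, this gives \eqref{eq:FK1}, so by Proposition \ref{pr:JT_special11} $\cT$ is a $J$-ternary algebra.

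For the converse, Theorem \ref{th:JTHein_Allison} supplies the Jordan algebra $\cJ=\FF\id+K(\cT,\cT)$, and the identity $[K(u,v),K(x,y)]=-S(K(u,v)x,y)+S(x,K(u,v)y)$ obtained from \eqref{eq:[TT]} with $K=T$ (valid because $\cT$ is special) guarantees $[a,b]\in S(\cT,\cT)$ for all $a,b\in\cJ$, so every bracket in \eqref{eq:L(T)bracket} actually lies in $\cL(\cT)$; antisymmetry is a matter of routine inspection (skew-symmetry of $[f,g]$ and of $K$, symmetry of $\gamma_{u,v}$ and of $S$). By construction each formula in \eqref{eq:L(T)bracket} is $\SLs(V)$-equivariant, so that once the Jacobi identity is verified, the embedding $\frsl(V)\hookrightarrow\cL(\cT)$, $f\mapsto f\otimes\id$, will exhibit the required inner short $\SLs_2$-structure. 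For the Jacobi identity itself, I would split by isotypic component of the three arguments; since each relevant space of $\SLs(V)$-equivariant maps is at most one-dimensional by Lemma \ref{le:sl2}, each case reduces to a single identity in the coordinate data of $\cT$.

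The principal obstacle is the Jacobi identity on three elements of $V\otimes\cT$: its $V\otimes\cT$-component encodes \eqref{eq:FK2} (via the pairings $(u\mid v)$ and $\gamma_{u,v}$ together with the definitions of $K$ and $S$), while its $\frsl(V)\otimes\cJ$-component encodes the special condition \eqref{eq:special}. Equally delicate is the case of two factors in $V\otimes\cT$ and one in $\frsl(V)\otimes\cJ$, where the dictionary of Theorem \ref{th:JTHein_Allison} translates the requirement into \eqref{eq:JTAllison2}. The remaining cases (in particular all those involving a factor in $S(\cT,\cT)$) are straightforward because $S(\cT,\cT)$ consists of derivations of the triple system by Lemma \ref{le:FK1} and acts compatibly on $\cJ$ via \eqref{eq:[SL]} and \eqref{eq:[TT]}.
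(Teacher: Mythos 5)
Your forward direction is essentially the paper's own argument: \eqref{eq:dxyz}, together with the symmetry of $d$ and the skew-symmetry of $\langle\cdot\mid\cdot\rangle$, gives \eqref{eq:JTHein2} (the special condition), and \eqref{eq:Dabx}, \eqref{eq:Dd}, \eqref{eq:dxya} combined with the action of $\cD$ give \eqref{eq:JTHein1}, i.e.\ \eqref{eq:FK1}. One small caveat: the last bullet of Theorem \ref{th:ShortA1Properties} only makes $D\in\cD$ a derivation with respect to $\cdot$, $\bullet$ and $\langle\cdot\mid\cdot\rangle$; to conclude that $d_{u,v}$ is a derivation of the triple product \eqref{eq:xyzdxy} you also need the $\cD$-invariance of the map $(x,y)\mapsto d_{x,y}$, which is part of the standing hypotheses in \eqref{eq:shortA1maps}, so this is easily repaired.

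The converse, however, rests on a case analysis that is wrong at its decisive point. For three arguments in $V\otimes\cT$ the Jacobiator lies entirely in $V\otimes\cT$: each double bracket $[[u\otimes x,v\otimes y],w\otimes z]$ belongs to $[(\frsl(V)\otimes\cJ)\oplus S(\cT,\cT),\,V\otimes\cT]\subseteq V\otimes\cT$ by \eqref{eq:L(T)bracket}. Hence there is no $\frsl(V)\otimes\cJ$-component from which to extract the special condition, and the unique component does not encode the quintic axiom \eqref{eq:FK2}: being trilinear in $\cT$, it encodes exactly \eqref{eq:dxyz}, which with $d_{x,y}=S(x,y)$ and $\langle x\mid y\rangle=K(x,y)$ reduces to the special condition \eqref{eq:special} (this is the computation closing the paper's proof). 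In fact \eqref{eq:FK2} is never needed anywhere: by Proposition \ref{pr:JT_special11} it is a consequence of \eqref{eq:FK1} and specialness. The genuinely higher-degree input enters instead in the case you call ``equally delicate'' (one factor in $\frsl(V)\otimes\cJ$, two in $V\otimes\cT$), whose vanishing amounts to \eqref{eq:axy}, \eqref{eq:Dd} and \eqref{eq:dxya}, i.e.\ to the operator identities \eqref{eq:KKs}, \eqref{eq:[TT]} and \eqref{eq:[TS]}, not to \eqref{eq:JTAllison2} as you claim (that identity is \eqref{eq:[TL]} with $K=T$, related but not what this Jacobi case demands; in particular the symmetrized identity \eqref{eq:KKs} does not follow from it without the extra work done in Proposition \ref{pr:JT_special11}). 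So, as written, your roadmap for the two key Jacobi cases would not go through and the bookkeeping must be redone. Note finally that the paper sidesteps the whole case-by-case verification by checking the hypotheses of Theorem \ref{th:ShortA1Properties}, whose ``if'' direction already packages the Jacobi identity; taking that route would both shorten your argument and bring the correct list of identities (\eqref{eq:KKs}, \eqref{eq:[TT]}, \eqref{eq:[TS]}, and \eqref{eq:special} for \eqref{eq:dxyz}) to the surface.
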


\begin{proof}
If $\cL$ is a Lie algebra endowed with an inner short $\SLs_2$-structure and with isotypic
decomposition as in \eqref{eq:shortA1_isotypic}, and we consider the triple product on $\cT$ given
by \eqref{eq:xyzdxy}, then for any $x,y,z\in\cT$ we get
\[
\begin{split}
\ptriple{x,y,z}-\ptriple{z,y,x}
  &=\frac{1}{2}\bigl(d_{x,y}(z)-d_{z,y}(x)-
      \langle x\mid y\rangle\bullet z+\langle z\mid y\rangle\bullet x\bigr)\\
 &=\langle x\mid z\rangle \bullet y
\end{split}
\]
because of \eqref{eq:dxyz}, and also
\[
\begin{split}
\ptriple{z,x,y}-\ptriple{x,z,y}
 &=\frac{1}{2}\bigl(d_{z,x}(y)-d_{x,z}(y)-
      \langle z\mid x\rangle\bullet y+\langle x\mid z\rangle\bullet y\bigr)\\
 &=\langle x\mid z\rangle \bullet y
\end{split}
\]
by the symmetry of $d_{x,y}$ and the skew-symmetry of $\langle x\mid y\rangle$. Hence, we obtain
\[
\ptriple{x,y,z}-\ptriple{z,y,x}=\langle x\mid z\rangle\bullet y=\ptriple{z,x,y}-\ptriple{x,z,y},
\]
which implies \eqref{eq:JTHein2}. Now, for $a\in\cJ$ and $x,y,z\in \cT$ we get:
\[
\begin{split}
a\bullet&\ptriple{x,y,z} -\ptriple{x,y,a\bullet z}\\
&=\frac{1}{2}\Bigl(a\bullet d_{x,y}(z)-d_{x,y}(a\bullet z)
   -a\bullet\bigl(\langle x\mid y\rangle\bullet z\bigr)+\langle x\mid y\rangle\bullet (a\bullet z)\Bigr)\\
&=\frac{1}{2}\Bigl(-d_{x,y}(a)\bullet z-4D_{a,\langle x\mid y\rangle}(z)\Bigr)\\
&\null\qquad\text{(using \eqref{eq:Dabx} and the fact that $d_{x,y}$ is a derivation)}\\[4pt]
&=\frac{-1}{2}\Bigl(\langle a\bullet x\mid y\rangle\bullet z-\langle x\mid a\bullet y\rangle\bullet z-d_{a\bullet x,y}(z)+d_{x,a\bullet y}(z)\Bigr)\\
&\null\qquad\text{(because of \eqref{eq:Dd} and \eqref{eq:dxya})}\\[4pt]
&=\ptriple{a\bullet x,y,z}-\ptriple{x,a\bullet y,z},
\end{split}
\]
thus obtaining the validity of \eqref{eq:JTAllison2}. The validity of \eqref{eq:JTHein1} follows from this,
 together with the invariance of $\ptriple{x,y,z}$ under the action of $\cD$:
\[
\begin{split}
\ptriple{x,y,\ptriple{u,v,z}}&=
     \frac{1}{2}d_{x,y}\bigl(\ptriple{u,v,z}\bigr)-\frac{1}{2}\langle x\mid y\rangle\bullet \ptriple{u,v,z}\\
 &=\frac{1}{2}\Bigl(\ptriple{d_{x,y}(u),v,z}+\ptriple{u,d_{x,y}(v),z}+\ptriple{u,v,d_{x,y}(z)}\\
  &\qquad -\ptriple{\langle x\mid y\rangle\bullet u,v,z}+\ptriple{u,\langle x\mid y\rangle\bullet v,z}
            -\ptriple{u,v,\langle x\mid y\rangle\bullet z}\Bigr)\\
 &=\ptriple{\ptriple{x,y,u},v,z}+\ptriple{u,\ptriple{y,x,v},z}+\ptriple{u,v,\ptriple{x,y,z}}.
\end{split}
\]

\smallskip

Conversely, let $\cT$ be a $J$-ternary algebra and consider the Jordan algebra $\cJ=\FF \id+K(\cT,\cT)$.
Define the anticommutative algebra $\cL(T)$ as in \eqref{eq:L(T)bracket}. Note first that this is well defined
because for $a,b\in\cJ\leq\End_\FF(\cT)^{(+)}$, the bracket $[a,b]$ lies in $S(\cT,\cT)$ by 
\eqref{eq:[TT]}, as $T(x,y)=K(x,y)$ for any $x,y\in\cT$ (see the proof of 
Proposition \ref{pr:JT_special11}). The maps in \eqref{eq:shortA1bracket} are the following:
\begin{gather*}
a\cdot b=\frac{1}{2}(ab+ba),\quad a\bullet x=a(x),\quad \langle x\mid y\rangle =K(x,y),\\
D_{a,b}=\frac{1}{4}[a,b],\quad d_{x,y}=S(x,y),
\end{gather*}
for $a,b\in\cJ$ and $x,y\in \cT$. All these maps are $S(\cT,\cT)$-invariant by \eqref{eq:[SL]}.

Let us check that the conditions in Theorem \ref{th:ShortA1Properties}
hold. Clearly $\cJ$ is a unital commutative algebra, as it is a Jordan subalgebra of $\End_\FF(\cT)^{(+)}$,
and \eqref{eq:bullet} holds. Also $S(\cT,\cT)$ acts by derivations of the algebra in \eqref{eq:J+T}.

Equation \eqref{eq:Dab} is clear, because for any $a,b,c\in\End_\FF(\cT)$ we have
\[
\begin{split}
a\cdot(b\cdot c)&-b\cdot(a\cdot c)\\
&=\frac{1}{4}\bigl(a(bc+cb)+(bc+cb)a-b(ac+ca)-(ac+ca)b\bigr)\\
&=\frac{1}{4}\bigl(abc+cba-bac-cab\bigr)\\
&=\frac{1}{4}\bigl([a,b]c-c[a,b]\bigr)=\frac{1}{4}[[a,b],c].
\end{split}
\]
Equation \eqref{eq:Dabc} is a consequence then of the Jacobi identity on $\End_\FF(\cT)$. Also,
for $a,b\in\cJ$ and $x\in\cT$ we get
\[
a\bullet(b\bullet x)-b\bullet(a\bullet x)=a(b(x))-b(a(x))=[a,b](x),
\]
thus obtaining \eqref{eq:Dabx}. Equations \eqref{eq:Dd}, \eqref{eq:axy} and \eqref{eq:dxya} follow,
respectively from equations \eqref{eq:[TT]}, \eqref{eq:KKs} and \eqref{eq:[TS]}. Finally, for $x,y,z\in\cT$
we compute:
\[
\begin{split}
\langle x\mid y\rangle\bullet z&-\langle z\mid y\rangle\bullet x+2\langle x\mid z\rangle\bullet y\\
 &=K(x,y)z-K(z,y)x+2K(x,z)y\\
 &=xzy-yzx-zxy+yxz+2xyz-2zyx\\
 &=(xzy-zxy)+(xyz+yxz)-(zyx+yzx)+(xyz-zyx)\\
 &=(xyz+yxz)-(zyx+yzx)\quad\text{(because of \eqref{eq:JTHein2})}\\
 &=S(x,y)z-S(z,y)x=d_{x,y}(z)-d_{z,y}(x),
\end{split}
\]
thus getting \eqref{eq:dxyz}.
\end{proof}

%------------------------------------------

\bigskip

\section{Structurable algebras and \texorpdfstring{$J$}{J}-ternary algebras}\label{se:structurable}

This section is devoted to study the connection of structurable algebras with some $J$-ternary algebras.

Structurable algebras were defined in \cite{Allison78} over fields of characteristic $\neq 2,3$. The
definition over arbitrary fields, or even commutative rings, requires an extra condition 
\cite[\S 5]{AllisonFaulkner_Steinberg}. 

\begin{definition}\label{df:structurable}
A unital algebra with involution $(\cA,-)$  is a \emph{structurable algebra} if the following
two conditions hold:
\begin{subequations}
\begin{gather}
[V_{a,b},V_{c,d}]=V_{V_{a,b}(c),d}-V_{c,V_{b,a}(d)}\label{eq:str1}\\
(a-\overline{a},b,c)=(b,\overline{a}-a,c)\label{eq:str2}
\end{gather}
\end{subequations}
for any $a,b,c,d\in\cA$, where $(a,b,c)\bydef (ab)c-a(bc)$ is the associator of $a,b,c$, and 
$V_{a,b}(c)=(a\overline{b})c+(c\overline{b})a-(c\overline{a})b$.
\end{definition}

Note that \eqref{eq:str1} shows that any structurable algebra satisfies \eqref{eq:FK1} with $\epsilon=-1$
for the triple product
\begin{equation}\label{eq:triple_str}
\{a,b,c\}\bydef V_{a,b}(c)=(a\overline{b})c+(c\overline{b})a-(c\overline{a})b.
\end{equation}
Also, \eqref{eq:str2} may be written as $(s,x,y)=-(x,s,y)$ for any 
$s\in \cS=\{x\in\cA\mid \overline{x}=-x\}$ and $x,y\in\cA$. Applying 
the involution we also have $(x,y,s)=-(x,s,y)$. These equations will be referred to as the 
skew-alternativity of $(\cA,-)$.

Our first aim in this section is to show that the construction of the Lie algebra $\cK(\cA,-)$ in 
\cite{Allison_Isotropic}, defined over fields of characteristic $\neq 2,3$, makes sense too in characteristic
$3$.

Recall that a Lie triple system is a vector space $\cT$ endowed with a trilinear product $[a,b,c]$ 
satisfying the following equations:
\begin{subequations}
\begin{gather}
[u,u,v]=0,\label{eq:LTS1}\\
[u,v,w]+[v,w,u]+[w,u,v]=0,\label{eq:LTS2}\\
[a,b,[u,v,w]]=[[a,b,u],v,w]+[u,[a,b,v],w]+[u,v,[a,b,w]],\label{eq:LTS3}
\end{gather}
\end{subequations}
for any $a,b,u,v,w\in\cT$. Also, for any Lie triple system $\cT$, its \emph{standard embedding} is the
($\ZZ/2\ZZ$-graded) Lie algebra $L(\cT,\cT)\oplus\cT$, where $L(x,y)(z)=[x,y,z]$, $L(\cT,\cT)$ is the span of the operators
$L(x,y)$ (a Lie subalgebra of $\End_\FF(\cT)$ because of \eqref{eq:LTS3}), with the Lie bracket given by
\[
[A+x,B+y]=\bigl([A,B]+L(x,y)\bigr)+\bigr(A(y)-B(x)\bigr)
\]
for $x,y\in\cT$ and $A,B\in L(\cT,\cT)$.

\smallskip

As in \cite{Faulkner_triples}, given a structurable algebra, consider the triple product $[.,.,.]$ defined on
the direct sum of two copies of $\cA$: $KT(\cA)\bydef \cA_+\oplus \cA_{-}$, as follows:
\begin{equation}\label{eq:KTA}
\begin{split}
[a_\delta,b_{-\delta},c_\delta]&=\{a,b,c\}_\delta,\\ 
[a_\delta,b_\delta,c_\delta]&=0,\\
[a_{-\delta},b_{\delta},c_\delta]&=-\{b,a,c\}_\delta,\\
[a_\delta,b_{\delta},c_{-\delta}]&=\{a,c,b\}_\delta-\{b,c,a\}_\delta,
\end{split}
\end{equation}
for $a,b,c\in\cA$, $\delta=\pm$.

\begin{proposition}\label{pr:KTA}
Let $(\cA,-)$ be a structurable algebra. Then, with the triple product in \eqref{eq:KTA}, $KT(\cA)$ is 
a Lie triple system.
\end{proposition}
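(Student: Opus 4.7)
The plan is to verify the three Lie triple system axioms by a finite case analysis based on the natural $\mathbb{Z}/2$-grading $KT(\cA)=\cA_+\oplus\cA_-$. First I would note that the triple product in \eqref{eq:KTA} is compatible with the finer $\mathbb{Z}$-grading that assigns degrees $+1$ and $-1$ to $\cA_+$ and $\cA_-$ respectively: a product $[x,y,z]$ is nonzero only if the sum of the degrees lies in $\{\pm 1\}$. This forces $[a_\delta,b_\delta,c_\delta]=0$ and reduces each axiom to a handful of cases; by the symmetry $\delta\leftrightarrow -\delta$, it suffices to consider products of total degree $+1$.

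Axiom \eqref{eq:LTS1} is immediate: either $u$ has the same parity as $v$, in which case $[u,u,v]=0$ by definition, or $u=a_\delta$ and $v=c_{-\delta}$, and the fourth formula in \eqref{eq:KTA} yields $\{a,c,a\}_\delta-\{a,c,a\}_\delta=0$. For axiom \eqref{eq:LTS2}, the only nontrivial configuration (up to grading symmetry) is $u,v\in\cA_\delta$ and $w\in\cA_{-\delta}$, and a direct substitution in \eqref{eq:KTA} gives
\[
\bigl(\{a,c,b\}_\delta-\{b,c,a\}_\delta\bigr)+\{b,c,a\}_\delta-\{a,c,b\}_\delta=0.
\]

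The substance of the proof is axiom \eqref{eq:LTS3}, the derivation identity, which I would recast as the operator identity
\[
[L(x,y),L(u,v)]=L\bigl(L(x,y)u,v\bigr)+L\bigl(u,L(x,y)v\bigr),
\]
where $L(x,y)\bydef[x,y,\cdot\,]$, and split according to the parities of $x$ and $y$. When $x\in\cA_\delta$ and $y\in\cA_{-\delta}$, the operator $L(x,y)$ preserves the grading, acting as $V_{x,y}$ on $\cA_\delta$ and as $-V_{y,x}$ on $\cA_{-\delta}$; the required identity then reduces directly to the defining structurable identity \eqref{eq:str1}. When $x$ and $y$ lie in the same graded component, $L(x,y)$ is odd for the grading and acts via combinations of $V$-operators transported across the grading by the formula \eqref{eq:triple_str}; the verification in this block is where the skew-alternativity \eqref{eq:str2} enters crucially, since it is precisely what relates an associator with a skew entry to its transposes.

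The main obstacle is organizing this last family of cases efficiently and checking that no step quietly divides by $3$. Over fields of characteristic different from $2$ and $3$, axiom \eqref{eq:str2} is actually a consequence of \eqref{eq:str1} (as shown in \cite{Allison78}), and Faulkner's original argument in \cite{Faulkner_triples} applies verbatim. Including \eqref{eq:str2} as a separate axiom in Definition~\ref{df:structurable}, as is customary for structurable algebras over arbitrary rings, is precisely what keeps the same argument valid in characteristic $3$, and hence what establishes the proposition in that case.
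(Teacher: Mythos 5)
Your proposal is correct and ultimately takes the same route as the paper, whose entire proof is the citation of Faulkner's Lemma 1.8 and Corollary 1.4: your treatment of the decisive axiom \eqref{eq:LTS3} likewise defers to Faulkner's characteristic-free argument, correctly observing that it is the inclusion of \eqref{eq:str2} in Definition \ref{df:structurable} that keeps it valid in characteristic $3$ (this is exactly the role of \eqref{eq:KV} quoted in Remark \ref{re:KAA}). The explicit checks you add for \eqref{eq:LTS1} and \eqref{eq:LTS2} are fine, with the minor caveat that \eqref{eq:LTS1} is quadratic in $u$, so for non-homogeneous $u$ one should note the polarized form $[u,v,w]=-[v,u,w]$, which follows at once from the first and third formulas in \eqref{eq:KTA}.
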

\begin{proof}
This is a consequence of \cite[Lemma 1.8 and Corollary 1.4]{Faulkner_triples}.
\end{proof}

\smallskip

\begin{remark}\label{re:KAA}
Let $(\cA,-)$ be a structurable algebra. Consider the triple product $\{a,b,c\}=V_{a,b}(c)$ 
in \eqref{eq:triple_str} and, as in \eqref{eq:LK}, the operators $K_{a,b}:c\mapsto \{a,c,b\}-\{b,c,a\}$.
A simple computation gives:
\begin{equation}\label{eq:Kab_str}
\begin{split}
K_{a,b}(c)&=\{a,c,b\}-\{b,c,a\}=V_{a,c}(b)-V_{b,c}(a)\\
 &=(a\overline{c})b+(b\overline{c})a-(b\overline{a})c
                   -(b\overline{c})a-(a\overline{c})b+(a\overline{b})c\\
 &=(a\overline{b}-b\overline{a})c=L_{a\overline{b}-b\overline{a}}(c),
\end{split}
\end{equation}
for any $a,b,c\in \cA$, where $L_x$ denotes the left multiplication by $x$. Hence, $K(\cA,\cA)$ (the
linear span of the operators $K_{a,b}$) is just $L_\cS$, where $\cS=\{a\in\cA\mid \overline{a}=-a\}=
\espan{x-\overline{x}\mid x\in\cA}$ is the subspace of skew-symmetric elements for the involution.

\cite[Corollary 1.4]{Faulkner_triples} shows that the following equation, which is equation (STS2) in
\cite{Faulkner_triples}, holds:
\begin{equation}\label{eq:KV}
K_{a,b}V_{u,v}+V_{v,u}K_{a,b}=K_{K_{a,b}(u),v}
\end{equation}
for any $a,b,u,v\in\cA$. Equations \eqref{eq:str1} and \eqref{eq:KV} show that $\cA$ is a 
$(-1,1)$ Freudenthal-Kantor triple system, i.e., a Kantor triple system.
\end{remark}

\smallskip

Given a structurable algebra $(\cA,-)$, consider the Lie triple system $\cT=KT(\cA)=\cA_+\oplus\cA_-$ in
\eqref{eq:KTA}. This Lie triple system $\cT$ is graded by $\ZZ$ with $\cT_{1}=\cA_+$ and 
$\cT_{-1}=\cA_-$. As a consequence, its standard embedding $\cL=L(\cT,\cT)\oplus\cT$ is $\ZZ$-graded
with:
\[
\cL_{\pm 1}=\cT_{\pm 1},\quad \cL_2=L(\cT_1,\cT_1),\quad \cL_{-2}=L(\cT_{-1},\cT_{-1}),\quad
\cL_0=L(\cT_1,\cT_{-1}).
\]
In what follows we will identify $\End_\FF(\cT)$ with the matrix algebra 
$\Mat_2\bigl(\End_\FF(\cA)\bigr)$. With this identification, we compute elements in $L(\cT,\cT)$:

\begin{itemize}
\item 
For $a,b,c\in \cA$ we have $[a_+,b_-,c_+]=\{a,b,c\}_+=V_{a,b}(c)_+$ and
$[a_+,b_-,c_-]=-[b_-,a_+,c_-]=-V_{b,a}(c)_-$. Hence, the operator $L(a_+,b_-)$, when considered
as an element in $\Mat_2\bigl(\End_\FF(\cA)\bigr)$, is
\[
L(a_+,b_-)=\begin{pmatrix} V_{a,b}&0\\ 0&-V_{b,a}\end{pmatrix}.
\]
We may consider, as in \cite{Allison78}, the linear endomorphism $\varepsilon$ on $\End_\FF(\cA)$ given
by \[
T^\varepsilon=T-L_{T(1)+\overline{T(1)}}.
\] 
For $x,y,z\in\cA$ we have
\[
\begin{split}
V_{x,y}(z)+V_{y,x}(z)&=(x\overline{y})z+(z\overline{y})x-(z\overline{x})y +
                                      (y\overline{x})z+(z\overline{x})y-(z\overline{y})x\\
      &=(x\overline{y}+y\overline{x})z=L_{x\overline{y}+y\overline{x}}(z).
\end{split}
\]
but $V_{x,y}(1)=x\overline{y}+\overline{y}x-\overline{x}y$, $\overline{V_{x,y}(1)}=
y\overline{x}+\overline{x}y-\overline{y}x$, so 
$x\overline{y}+y\overline{x}=V_{x,y}(1)+\overline{V_{x,y}(1)}$, and hence we get
$V_{x,y}+V_{y,x}=L_{V_{x,y}(1)+\overline{V_{x,y}(1)}}$, or 
\[
V_{x,y}^\varepsilon=-V_{y,x}.
\]
Therefore, $\varepsilon$ restricts to a linear automorphism of order $2$ of the
Lie algebra (\emph{inner structure Lie algebra})
$\mathfrak{instrl}(\cA,-)=\espan{V_{a,b}\mid a,b\in\cA}$. Equation \eqref{eq:str1} shows that
$\varepsilon$ becomes an order two automorphism of the Lie algebra $\mathfrak{instrl}(\cA,-)$.

With this notation, we get
\begin{equation}\label{eq:La+b-}
L(a_+,b_-)= \begin{pmatrix} V_{a,b}&0\\ 0&V_{a,b}^\varepsilon\end{pmatrix},
\end{equation}
and hence $\cL_0=L(\cT_1,\cT_{-1})$ is isomorphic to $\mathfrak{instrl}(\cA,-)$, by means of the
map $T\mapsto \begin{pmatrix} T&0\\ 0&T^\varepsilon\end{pmatrix}$.

\smallskip

\item
Again, for $a,b,c\in\cA$, we have 
$[a_+,b_+,c_-]=\{a,c,b\}_+-\{b,c,a\}_+=\bigl(L_{a\overline{b}-b\overline{a}}\bigr)(c)$
because of \eqref{eq:Kab_str}, which shows
\begin{equation}\label{eq:La+b+}
L(a_+,b_+)=\begin{pmatrix} 0&L_{a\overline{b}-b\overline{a}}\\ 0&0\end{pmatrix},
\end{equation}
so that $\cL_2$ is just $\begin{pmatrix} 0&L_{\cS}\\ 0&0\end{pmatrix}$, which can be identified with
$\cS$ by means of the map $s\mapsto \begin{pmatrix} 0&L_{s}\\ 0&0\end{pmatrix}$.

\smallskip

\item
In the same vein, 
\begin{equation}\label{eq:La-b-}
L(a_-,b_-)=\begin{pmatrix} 0& 0\\ L_{a\overline{b}-b\overline{a}}&0\end{pmatrix},
\end{equation}
and $\cL_{-2}$ can be identified too with $\cS$: 
$s\mapsto \begin{pmatrix} 0&0\\L_{s}&0\end{pmatrix}$.
\end{itemize}

\smallskip

Our next result shows that the Lie algebra $\cK(\cA,-)$ constructed in \cite{Allison_Isotropic} makes 
sense too assuming only that the characteristic is not $2$. It must be remarked that in characteristic 
$3$ it is
no longer true that $\mathfrak{instrl}(\cA,-)$ is the direct sum of $V_{\cA,1}$ and $\Der(\cA,-)$, a result
which is crucial in \cite{Allison_Isotropic} and that we cannot use here.

\begin{proposition}\label{pr:KantorLieAlgebra}
Let $(\cA,-)$ be a structurable algebra. Consider the Lie triple system $\cT=KT(\cA)$ in 
\eqref{eq:KTA} and let $\cL$ be its standard embedding. Then $\cL$ is isomorphic, as a $\ZZ$-graded
Lie algebra, to the algebra defined on the vector space
\[
\cK(\cA,-)=\cN^\sim\oplus\mathfrak{instrl}(\cA,-)\oplus \cN,
\]
where $\cN=\cA\times\cS=\{(x,s)\mid x\in\cA,\, s\in\cS\}$, $\cN^\sim$ is a copy of $\cN$, and 
the bracket is given by imposing that $\mathfrak{instrl}(\cA,-)$ is a subalgebra, together with
 the following formulas:
\begin{equation}\label{eq:KA-}
\begin{aligned}
\relax [T,(x,s)]&=\bigl(T(x),T^\delta(s)\bigr),\\ 
       [T,(x,s)^\sim]&=\bigl(T^\varepsilon(x),T^{\varepsilon\delta}(s)\bigr)^\sim,\\
[(x,s),(y,t)]&=(0,x\overline{y}-y\overline{x}), \\
     [(x,s)^\sim,(y,t)^\sim]&= (0,x\overline{y}-y\overline{x})^\sim,\\
[(x,s),(y,t)^\sim]&=-(tx,0)^\sim+\bigl(V_{x,y}+L_sL_t\bigr)+(sy,0),
\end{aligned}
\end{equation}
for $x,y\in\cA$, $s,t\in\cS$, $T\in\mathfrak{instrl}(\cA,-)$, where $T^\delta=T+R_{\overline{T(1)}}$, 
that is, $T^\delta(x)=T(x)+x\overline{T(1)}$ for any $T\in\mathfrak{instrl}(\cA,-)$ and $x\in\cA$.
\end{proposition}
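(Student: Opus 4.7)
The plan is to construct an explicit isomorphism between the standard embedding $\cL=L(\cT,\cT)\oplus\cT$ of the Lie triple system $\cT=KT(\cA)$ and the space $\cK(\cA,-)$, respecting the natural $\ZZ$-grading, and then check that the Lie bracket of $\cL$ matches the formulas in \eqref{eq:KA-}. The graded pieces of $\cL$ have already been identified in the text preceding the statement: \eqref{eq:La+b-} identifies $\cL_0=L(\cT_1,\cT_{-1})$ with $\frinstrl(\cA,-)$ via $L(a_+,b_-)\leftrightarrow V_{a,b}$; \eqref{eq:La+b+} and \eqref{eq:La-b-} identify $\cL_{\pm 2}$ with $\cS$ via $L(a_\pm,b_\pm)\leftrightarrow a\overline{b}-b\overline{a}$; and $\cL_{\pm 1}=\cA_\pm$ is identified with $\cA$. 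This yields the required vector space identification $\cL\cong\cN^\sim\oplus\frinstrl(\cA,-)\oplus\cN$.

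A first technical step is to verify well-definedness of these identifications on $\cL_0$ and $\cL_{\pm 2}$. For $\cL_0$, one must check that $\sum V_{a_i,b_i}=0$ forces $\sum V_{b_i,a_i}=0$, which follows from $V_{x,y}^\varepsilon=-V_{y,x}$ and the linearity of $\varepsilon$. For $\cL_{\pm 2}$, \eqref{eq:Kab_str} shows that $L(a_\pm,b_\pm)$ acts on $\cT_\mp$ as $L_{a\overline{b}-b\overline{a}}$, so it is determined by $a\overline{b}-b\overline{a}\in\cS$, and the map onto $\cS$ is surjective because $\cS=\espan{x-\overline{x}\mid x\in\cA}$.

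With the graded isomorphism in place, the brackets are matched component by component. The brackets $[\cL_0,\cL_{\pm 1}]$, $[\cL_{\pm 1},\cL_{\mp 1}]$ and $[\cL_{\pm 1},\cL_{\pm 1}]$ are immediate from \eqref{eq:KTA}, \eqref{eq:Kab_str} and the definition of the standard embedding, while $[\cL_0,\cL_0]$ is the commutator in $\frinstrl(\cA,-)$ by construction. The brackets $[\cL_0,\cL_{\pm 2}]$ and $[\cL_{\pm 1},\cL_{\mp 2}]$ are computed as operator commutators in the matrix realization $\Mat_2(\End_\FF(\cA))$; the resulting elements of $\cS$ (respectively $\cA$) are then matched with $V_{a,b}^\delta(s)$ (respectively $sy$, $tx$) in \eqref{eq:KA-} by direct expansion using $V_{a,b}(1)=a\overline{b}+\overline{b}a-\overline{a}b$ and the definition of $\delta$.

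The main obstacle I anticipate is the bracket $[\cL_{\pm 2},\cL_{\mp 2}]$. The Lie triple system structure guarantees that this bracket lies in $\cL_0$, but the matrix commutator gives $\begin{pmatrix}L_tL_s&0\\0&-L_sL_t\end{pmatrix}$, and to identify this with an element of $\frinstrl(\cA,-)$ under $T\mapsto\begin{pmatrix}T&0\\0&T^\varepsilon\end{pmatrix}$ one must verify the identity $(L_tL_s)^\varepsilon=-L_sL_t$. Expanding the definition of $\varepsilon$ together with $\overline{s}=-s$ and $\overline{t}=-t$, this reduces to $(s,t,x)+(t,s,x)=0$ for all $x\in\cA$, which is precisely the skew-alternativity consequence of \eqref{eq:str2}. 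Once this key identity is established, the matching of $[\cL_{\pm 2},\cL_{\mp 2}]$ with the $L_sL_t$ term in \eqref{eq:KA-} is immediate, and the proof is complete. Notably, the whole argument sidesteps the decomposition $\frinstrl(\cA,-)=V_{\cA,1}+\Der(\cA,-)$ relied upon in \cite{Allison_Isotropic}, thereby covering characteristic $3$ as well.
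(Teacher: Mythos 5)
Your proposal is correct and takes essentially the same route as the paper: the same degree-preserving identification of $\cL_{\pm1}$, $\cL_{\pm2}$, $\cL_0$ with $\cA$, $\cS$ and $\frinstrl(\cA,-)$ via \eqref{eq:La+b-}, \eqref{eq:La+b+}, \eqref{eq:La-b-}, followed by the same case-by-case verification of the brackets, with the key points being the $[\cL_0,\cL_{\pm2}]$ computation and the identity $(L_sL_t)^\varepsilon=-L_tL_s$. The only (harmless) difference is that the paper derives this last identity by first writing $L_sL_t=\frac{1}{2}(V_{st,1}-V_{s,t})$ and using $V$-identities, while you reduce it directly, via the definition of $\varepsilon$ and $\overline{st}=ts$, to the skew-alternativity relation $(s,t,x)+(t,s,x)=0$, which is equivalent.
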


Note that for $s,t\in\cS$ and $x\in\cA$, an easy computation gives
\[
\begin{split}
V_{st,1}(x)&-V_{s,t}(x)=(st)x+x(st)-x(\overline{st})-(s\overline{t})x-(x\overline{t})s+(x\overline{s})t\\
&=(st)x+x(st)-x(ts)+(st)x+(xt)s-(xs)t\\
&=2(st)x-(x,s,t)+(x,t,s)\\
&=2\bigl((st)x-(s,t,x)\bigr)\quad\text{by skew-alternativity}\\
&=2s(tx)=2L_sL_t(x),
\end{split}
\]
so that $L_sL_t=\frac{1}{2}(V_{st,1}-V_{s,t})$ is indeed in 
$\mathfrak{instrl}(\cA,-)$, and the bracket above is well defined. The $\ZZ$-grading on $\cK=\cK(\cA,-)$
is given by $\cK_{2}=\{(0,s)\mid s\in \cS\}$, $\cK_1=\{(x,0)\mid x\in\cA\}$, 
$\cK_0=\mathfrak{instrl}(\cA,-)$, $\cK_{-1}=\{(x,0)^\sim\mid x\in\cA\}$, and 
$\cK_{-2}=\{(0,s)^\sim\mid s\in\cS\}$.

\begin{proof}
Define the degree-preserving linear isomorphism $\varphi:\cL\rightarrow \cK$ as follows:
\begin{itemize}
\item 
$\varphi(x_+)=(x,0)$ and $\varphi(x_-)=(x,0)^\sim$, for $x\in\cA$,

\item
$\varphi\left(\begin{pmatrix} T&0\\ 0&T^\varepsilon\end{pmatrix}\right)=T$, for 
$T\in\mathfrak{instrl}(\cA,-)$,

\item 
$\varphi\left(\begin{pmatrix} 0&L_s\\ 0&0\end{pmatrix}\right)=(0,s)$ and
$\varphi\left(\begin{pmatrix} 0&0\\ L_s &0\end{pmatrix}\right)=(0,s)^\sim$, for $s\in\cS$.
\end{itemize}

Equations \eqref{eq:La+b-}, \eqref{eq:La+b+}, and \eqref{eq:La-b-} prove that 
$\varphi([X,Y])=[\varphi(X),\varphi(Y)]$ for $X,Y\in\cL_1\cup\cL_{-1}$. This is also trivially true for
$X\in\cL_1$ and $Y\in\cL_{-2}$, or $X\in\cL_{-1}$ and $Y\in\cL_2$.

For $s\in\cS$, \eqref{eq:Kab_str} gives $K_{s,1}(x)=(s\overline{1}-1\overline{s})x=2sx$, so we get
$K_{s,1}=2L_s$. Equations
\eqref{eq:KV} and \eqref{eq:Kab_str} give
\[
L_sV_{x,y}+V_{y,x}L_s=K_{sx,y}=L_{(sx)\overline{y}-y(\overline{sx})}.
\]
Hence, for $x,y\in\cA$ and $s\in \cS$ we get the following bracket in $\cL$:
\[
\left[\begin{pmatrix} V_{x,y}&0\\ 0&-V_{y,x}\end{pmatrix}, 
   \begin{pmatrix} 0&L_s\\ 0&0\end{pmatrix}\right]=
 \begin{pmatrix} 0& V_{x,y}L_s+L_sV_{y,x}\\ 0&0\end{pmatrix}=
 \begin{pmatrix} 0& L_{(sy)\overline{x}-x(\overline{sy})}\\ 0&0\end{pmatrix},
\]
but, since $\overline{s}=-s$, we compute
\[
\begin{split}
(sy)\overline{x}-x(\overline{sy})&= (sy)\overline{x}+x(\overline{y}s)\\
  &=(s,y,\overline{x})+s(y\overline{x})-(x,\overline{y},s)+(x\overline{y})s\\
 &=V_{x,y}(s)-(s,\overline{y},x)-s(\overline{y}x)+(s,\overline{x},y)+s(\overline{x}y)\\
 &\qquad +(s,y,\overline{x})+s(y\overline{x})-(s,x,\overline{y})\\
 &=V_{x,y}(s)+s(y\overline{x}+\overline{x}y-\overline{y}x)\\
 &\qquad +(s,\overline{x},y)+(s,y,\overline{x})-(s,\overline{y},x)-(s,x,\overline{y})\\
&=V_{x,y}(s)+s(\overline{x\overline{y}+\overline{y}x-\overline{x}y})\\
 &=V_{x,y}(s)+s\overline{V_{x,y}(1)}=V_{x,y}^\delta(s),
\end{split}
\]
because
\begin{multline*}
(s,\overline{x},y)+(s,y,\overline{x})-(s,\overline{y},x)-(s,x,\overline{y})\\
 = (s,\overline{x}-x,y)+(s,y,\overline{x}-x)-(s,\overline{y}-y,x)-(s,x,\overline{y}-y)=0,
\end{multline*}
by skew-alternativity.
Thus, for $x,y\in\cA$ and $s\in\cS$, we obtain the following bracket in $\cL$:
\[
\left[\begin{pmatrix} V_{x,y}&0\\ 0&-V_{y,x}\end{pmatrix}, 
   \begin{pmatrix} 0&L_s\\ 0&0\end{pmatrix}\right]=
 \begin{pmatrix} 0& V_{x,y}^\delta(s)\\ 0&0\end{pmatrix},
\]
which shows that $\varphi([X,Y])=[\varphi(X),\varphi(Y)]$ also for $X\in\cL_0$ and $Y\in \cL_2$.

In the same vein we compute, for $x,y\in\cA$ and $s\in\cS$,
\begin{multline*}
\left[\begin{pmatrix} V_{x,y}&0\\ 0&-V_{y,x}\end{pmatrix}, 
   \begin{pmatrix} 0&0\\ L_s&0\end{pmatrix}\right]
  =\begin{pmatrix} 0&0\\ -V_{y,x}L_s-L_sV_{x,y}&0\end{pmatrix}\\
   =\begin{pmatrix} 0&0\\ -L_{V_{y,x}(s)+s\overline{V_{y,x}(1)}}&0\end{pmatrix}
=\begin{pmatrix} 0&0\\ L_{V_{x,y}^\varepsilon(s)+s\overline{V_{x,y}^\varepsilon(1)}} & 0\end{pmatrix}
=\begin{pmatrix} 0&0\\ L_{V_{x,y}^{\varepsilon\delta}(s)}&0\end{pmatrix}
\end{multline*}

and this shows that $\varphi([X,Y])=[\varphi(X),\varphi(Y)]$ also for $X\in\cL_0$ and $Y\in \cL_{-2}$.

Finally, for $s,t\in\cS$ we get the bracket
\[
\left[\begin{pmatrix} 0&L_s\\ 0&0\end{pmatrix},\begin{pmatrix} 0&0\\ L_t&0\end{pmatrix}\right]
  =\begin{pmatrix} L_sL_t&0\\ 0&-L_tL_s\end{pmatrix}.
\]
Also, for any $x,y\in\cA$, 
$V_{x,1}(y)=xy+yx-y\overline{x}=V_{1,\overline{x}}(y)$, and hence
\[
(L_sL_t)^\varepsilon=\frac{1}{2}(V_{st,1}-V_{s,t})^\varepsilon
=\frac{1}{2}(-V_{1,st}+V_{t,s})=\frac{1}{2}(-V_{ts,1}+V_{t,s})=-L_tL_s.
\]
We conclude that $\varphi([X,Y])=[\varphi(X),\varphi(Y)]$ also for $X\in\cL_2$ and $Y\in \cL_{-2}$.
\end{proof}

\smallskip

\begin{remark}\label{re:KA-2}
A slightly different version of $\cK(\cA,-)$ is given in \cite[\S 6.4]{AllisonBenkartGao}, the bracket in
\eqref{eq:KA-} is substituted by
\begin{equation}\label{eq:KA-2}
\begin{aligned}
\relax [T,(x,s)]&=\bigl(T(x),T^\delta(s)\bigr),\\ 
       [T,(x,s)^\sim]&=\bigl(T^\varepsilon(x),T^{\varepsilon\delta}(s)\bigr)^\sim,\\
[(x,s),(y,t)]&=\bigl(0,2(x\overline{y}-y\overline{x})\bigr), \\
     [(x,s)^\sim,(y,t)^\sim]&=\bigl(0,2(x\overline{y}-y\overline{x})\bigr)^\sim,\\
[(x,s),(y,t)^\sim]&=-(tx,0)^\sim+\bigl(2V_{x,y}+L_sL_t\bigr)+(sy,0).
\end{aligned}
\end{equation}
As remarked in \cite[\S 6.4]{AllisonBenkartGao}, the map
\[
(y,t)^\sim + T + (x,s)\mapsto (y,\frac{1}{2}s)^\sim + T + (2x,2s)
\]
gives an isomorphism from the Lie algebra in \eqref{eq:KA-2} to the Lie algebra in \eqref{eq:KA-}.
\end{remark}

\bigskip

The next results are taken from \cite[\S 4.2]{BCCE}, but one has to be a bit careful as the characteristic
of the ground field here is just $\neq 2$. First, an easy lemma.

\begin{lemma}\label{le:5graded}
Let $\cL$ be a $5$-graded Lie algebra: $\cL=\cL_{-2}\oplus\cL_{-1}\oplus\cL_0\oplus\cL_1\oplus\cL_2$,
 and assume that there are elements $E\in\cL_2$ and 
$F\in\cL_{-2}$, such that the linear span of
$E$, $F$, and $H=[E,F]$ is a subalgebra isomorphic to $\frsl_2$, with 
$\cL_i=\{X\in\cL\mid [H,X]=iX\}$, for $i=-2,-1,0,1,2$.  (In particular $[H,E]=2E$ and 
$[H,F]=-2F$.) Then the following properties hold:
\begin{romanenumerate}
\item
The linear map $\ad_F\vert_{\cL_i}$ is one-to-one for $i=1,2$. Actually, the map 
$\cL_1\rightarrow \cL_{-1}$, $X\mapsto [F,X]$, is bijective with inverse $\cL_{-1}\rightarrow \cL_1$,
$Y\mapsto [E,Y]$; and the map $\cL_2\rightarrow \cL_{-2}$, $X\mapsto [F,[F,X]]$, is bijective
with inverse $\cL_{-2}\rightarrow\cL_2$, $Y\mapsto \frac{1}{4}[E,[E,Y]]$.

\item
Denote by $\frsl_2$ the subalgebra spanned by $H$, $E$, and $F$. Then we have
\[
\Cent_{\cL}(\frsl_2)=\{X\in\cL_0\mid [F,X]=0\}=\{X\in \cL_0\mid [E,X]=0\},
\]
where $\Cent_{\cL}(\frsl_2)$
denotes the centralizer in $\cL$ of the subalgebra $\frsl_2$. 

\item 
 The subalgebra $\cL_0$ splits as 
\[
\cL_0=\Cent_{\cL}(\frsl_2)\oplus [F,\cL_2].
\]

\item
The Lie algebra $\cL$ splits as
\[
\cL=\bigl(\cL_2\oplus [F,\cL_2]\oplus \cL_{-2}\bigr)\oplus \bigl(\cL_1\oplus\cL_{-1}\bigr)
\oplus \Cent_\cL(\frsl_2),
\]
and the subspace $\cL_2\oplus [F,\cL_2]\oplus \cL_{-2}=\sum_{X\in\cL_2}\espan{X,[F,X],[F,[F,X]]}$
is a sum of copies of the adjoint module for $\frsl_2$, the subspace 
$\cL_1\oplus\cL_{-1}=\sum_{X\in\cL_1}\espan{X,[F,X]}$ is a sum of copies of its natural two-dimensional 
module, and $\Cent_{\cL}(\frsl_2)$ is a sum of copies of the one-dimensional trivial module.

In particular the dimension of $\cL$ coincides with $3\dim\cL_2+2\dim\cL_1+\dim\Cent_{\cL}(\frsl_2)$.

\item
If the conditions $\cL_2=[\cL_1,\cL_1]$ and $\cL_0=[\cL_1,\cL_{-1}]$ hold, then we get
\begin{gather*}
\Cent_{\cL}(\frsl_2)=\espan{[X,[F,Y]]+[Y,[F,X]]\mid X,Y\in\cL_1},\\
[F,\cL_2]=\espan{[X,[F,Y]]-[Y,[F,X]]\mid X,Y\in\cL_1}.
\end{gather*}
\end{romanenumerate}
\end{lemma}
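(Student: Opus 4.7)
The proof reduces to standard $\frsl_2$-representation theory, but carried out through direct bracket manipulations so as to avoid any finite-dimensionality assumption and keep working only with $\chr\FF\neq 2$.

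For part (i), I would work purely from the Jacobi identity and the relations $[E,F]=H$, $[H,E]=2E$, $[H,F]=-2F$. For $X\in\cL_1$, the weight constraint forces $[E,X]\in\cL_3=0$, so $[E,[F,X]] = [[E,F],X]+[F,[E,X]] = [H,X] = X$; and symmetrically $[F,[E,Y]] = Y$ for $Y\in\cL_{-1}$. For $X\in\cL_2$, iterate: first $[E,[F,X]] = [H,X] = 2X$ (since $[E,X]\in\cL_4 = 0$), then $[E,[F,[F,X]]] = [H,[F,X]]+[F,[E,[F,X]]] = 0 + 2[F,X] = 2[F,X]$, and applying $[E,-]$ once more gives $[E,[E,[F,[F,X]]]] = 4X$. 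The mirror-image computation on $\cL_{-2}$ shows the claimed inverse formulas.

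Part (ii) then follows quickly: if $X\in\cL_0$ with $[F,X] = 0$, the element $[E,X]$ lies in $\cL_2$ and $[F,[E,X]] = -[H,X] + [E,[F,X]] = 0$, so $[F,[F,[E,X]]] = 0$ and the injectivity of $\ad_F^2$ on $\cL_2$ obtained in (i) forces $[E,X] = 0$; the other implication is symmetric. For (iii), given $X\in\cL_0$, I would pick $Z = \tfrac14[E,[E,[F,X]]]\in\cL_2$, which by (i) satisfies $[F,[F,Z]] = [F,X]$; then $X - [F,Z]$ is killed by $\ad_F$, hence lies in $\Cent_\cL(\frsl_2)$ by (ii). To see the sum is direct, if $[F,Z]\in\Cent_\cL(\frsl_2)$ with $Z\in\cL_2$, then $0 = [E,[F,Z]] = [H,Z] + [F,[E,Z]] = 2Z$, forcing $Z = 0$ since $\chr\FF\neq 2$.

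Part (iv) is now a combinatorial assembly: by the grading $\cL = \bigoplus_{i=-2}^2 \cL_i$ together with (iii), the stated decomposition is a direct sum. The three summands are $\frsl_2$-submodules of the claimed isotypic types, since for each $X\in\cL_2$ the span $\espan{X,[F,X],[F,[F,X]]}$ is a copy of the adjoint (by (i) the three pieces have equal dimension, giving the dimension formula), for each $X\in\cL_1$ the span $\espan{X,[F,X]}$ is a copy of the natural module, and $\Cent_\cL(\frsl_2)$ consists of trivial modules by definition. Finally, for (v), the hypothesis $\cL_0 = [\cL_1,\cL_{-1}]$ together with the surjectivity of $\ad_F:\cL_1\to\cL_{-1}$ from (i) shows $\cL_0 = \espan{[X,[F,Y]]\mid X,Y\in\cL_1}$. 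Decomposing $[X,[F,Y]]$ into its symmetric and skew-symmetric parts, the Jacobi identity yields $[X,[F,Y]] - [Y,[F,X]] = [F,[X,Y]]\in [F,\cL_2]$, while the symmetric part is killed by $\ad_F$ (since $[F,[F,X]] = [F,[F,Y]] = 0$ by weight), hence lies in $\Cent_\cL(\frsl_2)$ by (ii); the direct sum decomposition from (iii) shows these spanning sets are exact.

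No step presents a real obstacle; the only point requiring care is (ii), where one needs the injectivity of $\ad_F$ (not merely $\ad_F^2$) on $\cL_2$, but this is immediate from (i) because $[F,Z] = 0$ implies $[F,[F,Z]] = 0$, hence $Z = 0$.
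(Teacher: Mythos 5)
Your proof is correct and follows essentially the same route as the paper: direct Jacobi-identity computations giving the inverse formulas in (i), the injectivity arguments for (ii) and the directness in (iii), and the symmetric/skew splitting $[X,[F,Y]]=\tfrac12([X,[F,Y]]+[Y,[F,X]])+\tfrac12([X,[F,Y]]-[Y,[F,X]])$ for (v). The only cosmetic difference is in (iii), where you produce the $[F,\cL_2]$-component as $[F,\tfrac14[E,[E,[F,X]]]]$ instead of the paper's $\tfrac12[F,[E,X]]$; both yield the same decomposition.
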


\begin{proof}
If $X\in\cL_i$, $i=1,2$, and $[F,X]=0$, then we get $0=[E,[F,X]]=[[E,F],X]=[H,X]=iX$, because $[E,X]$ 
belongs to $\cL_{i+2}=0$. For $X\in\cL_1$, this argument, together with the analogous argument
with $F$ changed by $E$ and $i$ by $-i$, shows that the linear maps 
$\ad_F\colon \cL_1\rightarrow \cL_{-1}$ and $\ad_E:\cL_{-1}\rightarrow\cL_1$ are bijective and one is
the inverse of the other. Also, for $X\in\cL_2$, we get
\begin{multline*}
[E,[E,[F,[F,X]]]]=[E,[F,[E,[F,X]]]]\\
=[E,[F,[H,X]]]=2[E,[F,X]]=2[H,X]=4X,
\end{multline*}
where we have used that $[F,X]$ lies in $\cL_0=\{Z\in\cL\mid [H,Z]=0\}$ and that $[E,X]=0$. 
Assertion (i) follows.

It is clear that $\Cent_{\cL}(\frsl_2)$ lies in the centralizer of $F$, or of $E$. On the other hand,
$\Cent_{\cL}(\frsl_2)$ lies in the centralizer of $H$, which is $\cL_0$. Now, for $X\in\cL_0$ with $[F,X]=0$
we get $0=[E,[F,X]]=[F,[E,X]]$, but $\ad_F\vert_{\cL_2}$ is one-to-one, so $[E,X]=0$ follows. This proves
(ii).

For (iii), if $X$ lies in the intersection $\Cent_{\cL}(\frsl_2)\cap [F,\cL_2]$, then there is an element
$Y\in\cL_2$ such that $X=[F,Y]$ and $[E,X]=0$, so that $0=[E,X]=[E,[F,Y]]=[H,Y]=2Y$, and hence $Y=0$,
and also $X=0$. On the other hand, for $X\in\cL_0$, an easy computation gives:
\[
\begin{split}
[F,[F,[E,X]]]&=-[F,[H,X]]+[F,[E,[F,X]]]\\
   &=[F,[E,[F,X]]]\quad\text{because $[H,X]=0$,}\\
 &=-[H,[F,X]]+[E,[F,[F,X]]]\\
 &=2[F,X]\quad\text{because $[F,X]\in\cL_{-2}$ and $[F,[F,X]]\in\cL_{-4}=0$.}
\end{split}
\]
Hence, we have
\[
X=\left(X-\frac{1}{2}[F,[E,X]]\right)+ \frac{1}{2}[F,[E,X]]\in\Cent_{\cL}(\frsl_2)+[F,\cL_2],
\]
and (iii) follows. 

The assertion in (iv) follows at once from the previous ones.

Finally, in  case $\cL_2=[\cL_1,\cL_1]$ and $\cL_0=[\cL_1,\cL_{-1}]$, we get
\[
\cL_0=[\cL_1,\cL_{-1}]=[\cL_1,[F,\cL_1]]=\espan{[X,[F,Y]]\mid X,Y\in\cL_1},
\]
but for $X,Y\in\cL_1$ we have
\[
[X,[F,Y]]-[Y,[F,X]]=[[F,X],Y]+[X,[F,Y]]=[F,[X,Y]]\in [F,\cL_2],
\]
and 
\[
\Bigl[F,[X,[F,Y]]+[Y,[F,X]]\Bigr]=[[F,X],[F,Y]]+[[F,Y],[F,X]]=0,
\]
because $[F,[F,\cL_1]]\in\cL_{-3}=0$. This means that $[X,[F,Y]]+[Y,[F,X]]$ belongs to 
$\Cent_{\cL}(\frsl_2)$. The assertion in (v) follows at once from (iii) because 
$[X,[F,Y]]=\frac12([X,[F,Y]]+[Y,[F,X]])+\frac12([X,[F,Y]]-[Y,[F,X]])$.
\end{proof}

\begin{proposition}\label{pr:5graded}
Let $\cL$ be a $5$-graded Lie algebra: $\cL=\cL_{-2}\oplus\cL_{-1}\oplus\cL_0\oplus\cL_1\oplus\cL_2$,
 and assume that there are elements $E\in\cL_2$ and 
$F\in\cL_{-2}$, such that $\espan{E,F,H=[E,F]}$ is a subalgebra isomorphic to $\frsl_2$, with 
$\cL_i=\{X\in\cL\mid [H,X]=iX\}$, for $i=-2,-1,0,1,2$.   Then $\cL_1$ is a $J$-ternary algebra with the triple product
\[
\ptriple{X,Y,Z}=\frac{1}{2}[[X,[F,Y]],Z]
\]
for $X,Y,Z\in\cL_1$.
\end{proposition}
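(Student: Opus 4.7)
The plan is to deduce this from the forward direction of Theorem \ref{th:JTernarySL2} (Lie algebra with inner short $\SLs_2$-structure gives a $J$-ternary algebra on $\cT$), by realizing $\cL_1$ as the space $\cT$ in the isotypic decomposition \eqref{eq:shortA1_isotypic}. First, the embedding $\iota:\frsl_2\hookrightarrow\cL$ sending $e,f,h$ to $E,F,H$ makes $\cL$ into an $\frsl_2$-module whose $\ad H$-eigenspace decomposition is exactly the given $5$-grading, so only the adjoint, natural, and trivial irreducibles appear. Lemma \ref{le:5graded}(iv) supplies the required isotypic decomposition $\cL\cong\bigl(\frsl(V)\otimes\cJ\bigr)\oplus\bigl(V\otimes\cT\bigr)\oplus\cD$, with $\cJ\cong\cL_2$, $\cT\cong\cL_1$, $\cD=\Cent_\cL(\frsl_2)$, and with distinguished element $1\in\cJ$ being the preimage of $E$. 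Since the three irreducibles involved are restricted and lift to algebraic group representations of $\SLs_2$, the $\frsl_2$-action integrates to a genuine $\SLs_2$-action, yielding an inner short $\SLs_2$-structure in the sense of Definition \ref{df:shortA1}.

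Next, I would verify that the triple product defined in the proposition matches \eqref{eq:xyzdxy}. Fix a basis $v_+,v_-\in V$ adapted to $e,f,h$ and normalize the skew form by $(v_+\mid v_-)=1$; identify $\cL_1$ with $v_+\otimes\cT$ via $X=v_+\otimes x$. A short calculation using \eqref{eq:shortA1bracket} gives $[F,v_+\otimes y]=v_-\otimes y$ and
\[
[v_+\otimes x,v_-\otimes y]=-h\otimes\langle x\mid y\rangle+d_{x,y},
\]
since $\gamma_{v_+,v_-}=-h$. Bracketing once more with $v_+\otimes z$, using that $h(v_+)=v_+$ and $[d_{x,y},v_+\otimes z]=v_+\otimes d_{x,y}(z)$, yields
\[
[[X,[F,Y]],Z]=v_+\otimes\bigl(d_{x,y}(z)-\langle x\mid y\rangle\bullet z\bigr),
\]
so that $\frac{1}{2}[[X,[F,Y]],Z]$ is precisely $v_+\otimes\ptriple{x,y,z}$ for the triple product \eqref{eq:xyzdxy}. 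The $J$-ternary axioms for this product on $\cT$ are then immediate from Theorem \ref{th:JTernarySL2}.

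The main obstacle is purely bookkeeping: keeping track of the normalizations of the skew form $(.\mid.)$, the basis $v_\pm$, and the distinguished element $1\in\cJ$ so that the identification $F=f\otimes 1$ is internally consistent, and in particular so that the sign in $\gamma_{v_+,v_-}=-h$ comes out correctly in order for the factor $\frac{1}{2}$ in front of $[[X,[F,Y]],Z]$ to reproduce \eqref{eq:xyzdxy}. Nothing substantial beyond Lemma \ref{le:5graded} and the forward direction of Theorem \ref{th:JTernarySL2} is required.
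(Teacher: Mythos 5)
Your proposal is correct and follows essentially the same route as the paper: Lemma \ref{le:5graded} gives the decomposition of $\cL$ into adjoint, natural and trivial $\frsl_2$-modules, hence a short $\SLs_2$-structure with $\cT$ identified with $\cL_1=p\otimes\cT$, and then the forward direction of Theorem \ref{th:JTernarySL2} together with the same bracket computation from \eqref{eq:shortA1bracket} (including the sign $\gamma_{p,q}(p)=-p$) shows $[[X,[F,Y]],Z]=p\otimes 2\ptriple{x,y,z}$. Your explicit remark that the restricted $\frsl_2$-action integrates to an $\SLs_2$-action is a point the paper passes over silently, but it is the same argument.
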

\begin{proof}
$\cL$ splits, as a module for the Lie subalgebra $\frsl_2=\espan{E,F,H}$, as a direct sum of copies
of the adjoint module, the natural module, and the trivial module, as shown in Lemma \ref{le:5graded}.
 Actually, for any  $X\in\cL_2$, 
$\espan{X,[F,X],[F,[F,X]]}$ is (isomorphic to) the adjoint module, while for any $X\in\cL_1$, 
$\espan{X,[F,X]}$ is (isomorphic to) the natural module. The sum of the trivial modules, that is, the 
centralizer of $\frsl_2$, is contained in $\cL_0$. 
Therefore, $\cL$ is endowed with a short 
$\SLs_2$-structure and can be written as in \eqref{eq:shortA1_isotypic}, with $V=\FF p\oplus\FF q$, 
$(p\mid q)=1$, $H,E,F\in\frsl(V)$ with coordinate matrices 
$\left(\begin{smallmatrix} 1&0\\ 0&-1\end{smallmatrix}\right)$,
$\left(\begin{smallmatrix} 0&1\\ 0&0\end{smallmatrix}\right)$, and
$\left(\begin{smallmatrix} 0&0\\ 1&0\end{smallmatrix}\right)$, respectively, 
 in the basis $\{p,q\}$. In particular $\cT$ can be identified with $\cL_1=p\otimes\cT$.

Now, $\cT$ is a $J$-ternary algebra by Theorem \ref{th:JTernarySL2}, with triple product
in \eqref{eq:xyzdxy}, and for $X=p\otimes x$, $Y=p\otimes y$, and $Z=p\otimes z$ in $\cL_1$, we use
 \eqref{eq:shortA1bracket} to get
\[
\begin{split}
[[X,[F,Y]],Z]&=[[p\otimes x,[F,p\otimes y]],p\otimes z]\\
  &=[[p\otimes x,q\otimes y],p\otimes z]\\
&=[\gamma_{p,q} \otimes \langle x\mid y\rangle + d_{x,y},p\otimes z]\\
&=\gamma_{p,q}(p)\otimes\langle x\mid y\rangle\bullet z + p\otimes d_{x,y}(z)\\
&=p\otimes\Bigl(d_{x,y}(z)-\langle x\mid y\rangle\bullet z\Bigr)\\
&=p\otimes 2\ptriple{x,y,z},
\end{split}
\]
thus proving our result.
\end{proof}

\begin{theorem}\label{th:structurable_Jternary}
Let $(\cA,-)$ be a structurable algebra and let $s\in\cS$ be a skew-symmetric element such that the left
multiplication $L_s$ is bijective. Then $\cA$ is a $J$-ternary algebra with the triple product
\[
\ptriple{x,y,z}=V_{x,sy}(z)
\]
for $x,y,z\in\cA$.

Moreover, $\cS$ is a Jordan algebra with multiplication
\[
a\cdot b=\frac12\bigl(a(sb)+b(sa)\bigr),
\]
for $a,b\in\cS$;
$\cA$ is a special Jordan $\cS$-module with
\[
a\bullet x=a(sx),
\]
for $a\in\cS$ and $x\in\cA$; and $\cA$ is an $\cS$-ternary algebra with the triple product above and with
\[
\langle x\mid y\rangle=y\overline{x}-x\overline{y},
\]
for $x,y\in\cA$.

Moreover, if $t$ is the element in $\cA$ with $L_s(t)=1$, then $t$ is in $\cS$ and the Lie algebra 
$S(\cA,\cA)\leq\End_\FF(\cA)^{(-)}$ in \eqref{eq:STT} is given by
\[
S(\cA,\cA)=\{T\in\mathfrak{instrl}(\cA,-)\mid T^\delta(t)=0\}.
\]
\end{theorem}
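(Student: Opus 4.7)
The strategy is to embed $\cA$ as the degree-$1$ component of the $5$-graded Kantor Lie algebra $\cK(\cA,-)$ of Proposition~\ref{pr:KantorLieAlgebra}, to exhibit an $\frsl_2$-triple in its outer grades whose semisimple element acts on $\cK_i$ with weight $i$, and then to read off the $J$-ternary structure from Proposition~\ref{pr:5graded}. For concreteness I will work with the rescaled version \eqref{eq:KA-2} of the Kantor bracket, which makes the scaling cleanest.

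The crucial preliminary fact is that $t\bydef L_s^{-1}(1)$ lies in $\cS$ and satisfies $L_tL_s=L_sL_t=\id_\cA$; this is the classical $s$-invertibility property for structurable algebras from \cite{Allison78}. I expect this to be the main technical hurdle, but it can be established inside $(\cA,-)$ itself by combining the skew-alternativity \eqref{eq:str2} with the identity $V_{x,y}+V_{y,x}=L_{x\overline{y}+y\overline{x}}$ derived in the proof of Proposition~\ref{pr:KantorLieAlgebra}.

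With $t$ in hand, set $E\bydef(0,t)\in\cK_2$ and $F\bydef(0,s)^\sim\in\cK_{-2}$. A calculation using \eqref{eq:KA-2} gives $H\bydef[E,F]=L_tL_s=\id_\cA\in\cK_0$; since $\id^\delta=2\,\id$ and $\id^\varepsilon=-\id$, one checks at once that $[H,X]=iX$ for every $X\in\cK_i$, so $(E,H,F)$ is the short $\frsl_2$-triple required by Proposition~\ref{pr:5graded}. That proposition therefore equips $\cK_1\cong\cA$ (under the identification $(x,0)\leftrightarrow x$) with a $J$-ternary structure via $\ptriple{X,Y,Z}=\tfrac12[[X,[F,Y]],Z]$. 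Direct bracket computation with \eqref{eq:KA-2} yields $[F,(y,0)]=(sy,0)^\sim$, then $[(x,0),(sy,0)^\sim]=2V_{x,sy}\in\mathfrak{instrl}(\cA,-)$, and finally $[2V_{x,sy},(z,0)]=(2V_{x,sy}(z),0)$, so the induced triple product on $\cA$ is exactly $\ptriple{x,y,z}=V_{x,sy}(z)$.

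For the remaining assertions I would invoke Theorem~\ref{th:JTHein_Allison}, which already presents $\cA$ as a $\cJ$-ternary algebra for $\cJ\bydef\FF\id+K(\cA,\cA)\subseteq\End_\FF(\cA)^{(+)}$ with action $a\bullet x=a(x)$ and bilinear form $\langle x\mid y\rangle=-K(x,y)$. Combining Remark~\ref{re:KAA} with $\ptriple{x,y,z}=V_{x,sy}(z)$ gives
\[
K(x,y)z=\ptriple{x,z,y}-\ptriple{y,z,x}=V_{x,sz}(y)-V_{y,sz}(x)=(x\overline{y}-y\overline{x})(sz),
\]
so $K(x,y)=L_{x\overline{y}-y\overline{x}}L_s$; since $\id=L_tL_s$ with $t\in\cS$, this forces $\cJ=L_{\cS}L_s$, and the map $a\mapsto L_aL_s$ is a linear isomorphism $\cS\to\cJ$. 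Pulling back the Jordan product, module action, and bilinear form through this isomorphism produces precisely $a\cdot b=\tfrac12(a(sb)+b(sa))$, $a\bullet x=a(sx)$, and $\langle x\mid y\rangle=y\overline{x}-x\overline{y}$. Finally, Proposition~\ref{pr:5graded} identifies $S(\cA,\cA)$ with $\Cent_{\cK}(\frsl_2)\cap\cK_0$, and $[T,E]=(0,T^\delta(t))$ from \eqref{eq:KA-} gives the stated description $S(\cA,\cA)=\{T\in\mathfrak{instrl}(\cA,-)\mid T^\delta(t)=0\}$.
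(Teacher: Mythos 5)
Your outline follows the paper's own proof essentially step by step: the same $\frsl_2$-triple $E=(0,t)$, $F=(0,s)^\sim$, $H=\id$ in $\cK(\cA,-)$, Proposition \ref{pr:5graded} to obtain the triple product $V_{x,sy}(z)$ (the paper itself notes that the scaling of Remark \ref{re:KA-2} removes the factor $\frac12$), Theorem \ref{th:JTHein_Allison} together with the computation $K(x,y)=L_{x\overline y-y\overline x}L_s$ for the $\cS$-ternary structure, and the centralizer description of $S(\cA,\cA)$. These parts are correct, and the small mis-attribution at the end is harmless: the identification $S(\cA,\cA)=\Cent_{\cK(\cA,-)}(\frsl_2)=\{T\in\frinstrl(\cA,-)\mid T^\delta(t)=0\}$ comes from items (v) and (ii) of Lemma \ref{le:5graded}, not from Proposition \ref{pr:5graded}, and requires the (easy) verifications $\cK(\cA,-)_2=[\cK(\cA,-)_1,\cK(\cA,-)_1]$ and $\cK(\cA,-)_0=[\cK(\cA,-)_1,\cK(\cA,-)_{-1}]$.

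The genuine gap is in the two facts that rest on skew-alternativity. First, for $t=L_s^{-1}(1)$ you must prove $t\in\cS$ and $L_sL_t=L_tL_s=\id$; the identity $V_{x,y}+V_{y,x}=L_{x\overline y+y\overline x}$ you propose does not yield this, and the appeal to \cite{Allison78} is delicate because that reference assumes characteristic $\neq 2,3$, while the theorem must hold (and is later applied) in characteristic $3$. The paper's argument is characteristic-free for char $\neq 2$: skew-alternativity gives $(s,t,s)=0$, whence $s(ts)=(st)s=s$ and $ts=1$; conjugating gives $t\in\cS$; and the Moufang-type identity $s(t(sx))=(s(ts))x$, valid for skew elements by skew-alternativity as in \cite[\S 2.1]{Schafer}, gives $L_sL_tL_s=L_s$ and hence $L_sL_t=L_tL_s=\id$. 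Second, the assertion that pulling back the Jordan product of $\cJ=L_\cS L_s$ through $a\mapsto L_aL_s$ \emph{produces precisely} $a\cdot b=\frac12\bigl(a(sb)+b(sa)\bigr)$ is exactly where the linearized form of the same Moufang identity, $L_aL_sL_b+L_bL_sL_a=L_{a(sb)+b(sa)}$ for $a,b\in\cS$, is needed; without it the transfer of the product $\frac12(fg+gf)$ from $\End_\FF(\cA)^{(+)}$ to $\cS$ does not close. Once these two points are supplied, your proof coincides with the paper's.
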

\begin{proof}
Since $L_s$ is invertible, if $t\in\cA$ is the element such that $st=1$ then, by skew-alternativity, 
we have
$s(ts)=(st)s=s=s1$, which gives $L_s(ts-1)=0$ and $ts=1$. Taking conjugates we get 
$-s\overline{t}=1$, so $L_s(t+\overline{t})=0$ and $t\in\cS$. Moreover, by skew-alternativity, we
have the Moufang identity (proof as in \cite[\S 2.1]{Schafer}) $s(t(sx)=(sts)x=sx$ for any $x\in\cA$. 
Thus $L_sL_tL_s=L_s$ and the invertibility of $L_s$ gives $L_sL_t=L_tL_s=\id$.

Now, the elements $E=(0,t)$ and $F=(0,s)^\sim$ in $\cK(\cA,-)$ (Proposition \ref{pr:KantorLieAlgebra})
satisfy the hypotheses of Proposition \ref{pr:5graded}, which imply that 
$\cK(\cA,-)_1=\{(x,0)\mid x\in\cA\}$ 
is a $J$-ternary
algebra with the triple product
\begin{multline*}
\ptriple{(x,0),(y,0),(z,0)}=\frac12 [[(x,0),[F,(y,0)]],(z,0)]=\frac12 [[(x,0),(sy,0)^\sim],(z,0)]\\
     =\frac12 [V_{x,sy},(z,0)]=\frac12 (V_{x,sy}(z),0),
\end{multline*}
for $x,y,z\in\cA$, and hence we obtain that $\cA$ is a $J$-ternary algebra with the given triple product
 after scaling  by $2$. (Alternatively, 
the version of $\cK(\cA,-)$ in Remark \ref{re:KA-2} can be used to avoid the appearance of $\frac12$ in
the formula above.)

Let us consider now the operators in \eqref{eq:LK} for this $J$-ternary algebra (i.e., special $(1,1)$ 
Freudenthal-Kantor triple system), where
$\ptriple{x,y,z}=V_{x,sy}(z)$. To distinguish it from the $K$-operators $K_{a,b}$ in 
\eqref{eq:Kab_str} we will
denote them by $K^s$ remarking its dependence on the fixed element $s\in\cS$. For $x,y,z\in\cA$,
we get
\begin{multline*}
K^s(x,y)(z)=\ptriple{x,z,y}-\ptriple{y,z,x}
  = V_{x,sz}(y)-V_{y,sz}(x)=K_{x,y}(sz)\\
  =L_{x\overline{y}-y\overline{x}}(sz)=L_{x\overline{y}-y\overline{x}}L_s(z),
\end{multline*}
so that $K^s(\cA,\cA)=L_{\cS}L_s$. As $L_tL_s=\id$, the Jordan algebra $\cJ=\FF\id+K^s(\cA,\cA)$
is just $\cJ=K^s(\cA,\cA)=L_{\cS}L_s$. 

For any $a\in\cS$, by skew-alternativity we have, as before, the Moufang identity $L_aL_sL_a=L_{a(sa)}$, 
and hence, for any $a,b\in\cS$, $L_aL_sL_b+L_bL_sL_a=L_{a(sb)+b(sa)}$, so that the Jordan product 
in $\cJ=L_{\cS}L_s$ becomes
\[
\frac12\bigl((L_aL_s)(L_bL_s)+(L_bL_s)(L_aL_s)\bigr)=\frac12L_{a(sb)+b(sa)}L_s,
\]
for $a,b\in\cS$. Transferring this Jordan product to $\cS$ through the linear bijection 
$\cS\rightarrow \cJ$, $a\mapsto L_aL_s$, shows that $\cS$ is a Jordan algebra with the product
\[
a\cdot b=\frac{1}{2}\bigl(a(sb)+b(sa)\bigr),
\] 
and that $\cA$ is a special module for $\cS$ with $a\bullet x=L_aL_s(x)=a(sx)$,
for $a\in\cS$ and $x\in \cA$.

Theorem \ref{th:JTHein_Allison} shows that $\cA$ is an $\cS$-ternary algebra with the
given triple product and with
\[
\langle x\mid y\rangle =y\overline{x}-x\overline{y}
\]
for $x,y\in\cA$, because 
$-K^s(x,y)=-L_{x\overline{y}-y\overline{x}}L_s=L_{y\overline{x}-x\overline{y}}L_s$, which corresponds,
through the linear bijection above, to the element $y\overline{x}-x\overline{y}\in\cS$.

Finally, from the definition of the Lie algebra $S(\cA,\cA)$ we get
\[
\begin{split}
S(\cA,\cA)&=\espan{V_{x,sy}+V_{y,sx}\mid x,y\in\cA}\\
 &=\espan{[(x,0),[(0,s)^\sim,(y,0)]]+[(y,0),[(0,s)^\sim,(x,0)]]\mid x,y\in\cA}\\
 &=\Cent_{\cK(\cA,-)}(\frsl_2)\quad \text{($\frsl_2=\espan{(0,t),\id,(0,s)^\sim}$)}\\
 &=\{T\in\mathfrak{instrl}(\cA,-)\mid [(0,t),T]=0\}\\
 &=\{T\in\mathfrak{instrl}(\cA,-)\mid T^\delta(t)=0\},
\end{split}
\]
where we have used items (v) and (ii) of Lemma \ref{le:5graded}. (Note that 
$\cK(\cA,-)_0=\mathfrak{instrl}(\cA,-)=[\cK(\cA,-)_1,\cK(\cA,-)_{-1}]$ and
$\cK(\cA,-)_2=[\cK(\cA,-)_1,\cK(\cA,-)_1]$.)
\end{proof}

\smallskip

\begin{example}\label{ex:hermitian_prototypical}
Let $(\cA=\cE\oplus\cW,-)$ be a structurable algebra of a hermitian form 
(see \cite[Example 6.5]{AllisonFaulkner_Steinberg}), that is,
\begin{itemize}
\item 
$(\cE,-)$ is an associative algebra with involution, with multiplication denoted by juxtaposition,

\item 
$\cW$ is a left $\cE$-module with action denoted by $e\circ x$ for $e\in\cE$ and $x\in\cW$,

\item 
$\cW$ is endowed with a hermitian form $\hup:\cW\times\cW\rightarrow \cE$, so that 
$\hup(y,x)=\overline{\hup(x,y)}$ and $\hup(e\circ x,y)=e\hup(x,y)$, for any $x,y\in\cW$ and $e\in\cE$,

\item 
the involution on $\cA$ is defined by $\overline{e+x}=\overline{e}+x$, for $e\in\cE$ and $x\in\cW$, 
that is, the involution on $\cE$ is extended to an order $2$ linear automorphism of $\cA$ by
imposing that $\cW$ consists of symmetric elements.

\item
The multiplication in $\cA$ is defined as follows:
\[
(e_1+x_1)(e_2+x_2)=\bigl(e_1e_2+\hup(x_2,x_1)\bigr)+\bigl(\overline{e_1}\circ x_2+e_2\circ x_1\bigr)
\]
for $e_1,e_2\in\cE$ and $x_1,x_2\in\cW$.
\end{itemize}
Take an element $s\in\cS=\{x\in\cA\mid \overline{x}=-x\}=\{a\in\cE\mid\overline{a}=-a\}$, 
with $L_s$ invertible. Theorem \ref{th:structurable_Jternary} shows that $\cS$ is a Jordan algebra
with $a\cdot b=\frac12 (asb+bsa)$ for $a,b\in\cS$ ($\cE$ is associative, so there is no need 
of parentheses), and that $\cA$ is an $\cS$-ternary algebra with 
\[
a\bullet x=a(sx),\quad \langle x\mid y\rangle =y\overline{x}-x\overline{y},\quad
\ptriple{x,y,z}=V_{x,sy}(z),
\]
for $a\in\cS$, $x,y,z\in\cA$.

Let us show that this ternary triple product is prototypical (Example \ref{ex:prototypical}).

To begin with, consider the \emph{isotope} $\cE^{(s)}$, which is the associative algebra defined on
$\cE$ with new multiplication $a*b\bydef asb$ for any $a,b\in\cE$, and with involution 
$\tau(a)=-\overline{a}$ for any $a\in\cE$. Note that $\tau$ is indeed an involution because 
$\overline{s}=-s$. The unit element of $\cE^{(s)}$ is $t=s^{-1}$, and the Jordan algebra of
symmetric elements is $H(\cE^{(s)},\tau)=\{a\in\cE\mid \tau(a)=a\}=\cS$, with the product
$\frac12 (a*b+b*a)=\frac12(asb+bsa)=a\cdot b$ for $a,b\in\cS$.

Moreover, $\cW$ is a left $\cE^{(s)}$-module with 
\[
a* x\bydef (as)\circ x=a\circ(s\circ x)
\]
for any $a\in \cE$ and $x\in\cW$. Hence, for any $a\in\cS$, $x\in\cW$, and $e\in\cE$,
$a\bullet x=a(sx)=\overline{a}\circ(\overline{s}\circ x)=a\circ(s\circ x)=a* x$, while 
$a\bullet e=ase=a*e$. We conclude that $\cA$ is a left $\cE^{(s)}$-module with action given by
$a\bullet x$ for $a\in\cE$ and $x\in\cA$.

For $x,y\in\cW$ and $a,b\in\cE$, we have
\[
\begin{split}
\langle x\mid y\rangle &=y\overline{x}-x\overline{y}=yx-xy=\hup(x,y)-\hup(y,x),\\
\langle a\mid x\rangle &=x\overline{a}-a\overline{x}=\overline{a}\circ x-\overline{a}\circ x=0,\\
\langle a\mid b\rangle &=b\overline{a}-a\overline{b}.
\end{split}
\]
Define a bilinear form 
\[
\tilde \hup:\cA\times\cA\rightarrow \cE^{(s)}
\]
by imposing $\tilde\hup(\cE,\cW)=\tilde\hup(\cW,\cE)=0$,
$\tilde\hup(x,y)=\hup(x,y)$ for $x,y\in\cW$, and $\tilde \hup(a,b)=-a\overline{b}$ for $a,b\in\cE$. 
By its own definition we get
\[
\langle x\mid y\rangle =\tilde \hup(x,y)-\tilde \hup(y,x),
\]
for any $x,y\in\cA$. For $x,y\in\cW$, $e,f\in\cE$, and $a\in\cS$, we have:
\[
\begin{split}
\tilde \hup(y,x)&=\hup(y,x)=\overline{\hup(x,y)}=-\tau\bigl(\tilde \hup(x,y)\bigr),\\
\tilde \hup(f,e)&=-f\overline{e}=-\overline{e\overline{f}}
      =\tau\bigl(e\overline{f}\bigr)=-\tau\bigl(\tilde\hup(e,f)\bigr),\\
\tilde \hup(a\bullet x,y)&=\hup((as)\circ x,y)=(as)\hup(x,y)=a\bullet \hup(x,y)=a\bullet\tilde \hup(x,y),\\
\tilde \hup(a\bullet e,f)&=-ase\overline{f}=a\bullet\tilde \hup(e,f).
\end{split}
\]
This shows that $\tilde \hup$ is a skew-hermitian form. We must finally check that 
$\ptriple{x,y,z}=\tilde \hup(x,y)\bullet z+\tilde \hup(z,x)\bullet y +\tilde \hup(z,y)\bullet x$ for any $x,y,z\in\cA$.
We split this into several cases.

For $x,y,z\in\cW$ we get
\[
\begin{split}
\ptriple{x,y,z}&=V_{x,sy}(z)
      =\bigl(x(\overline{sy})\bigr)z+\bigl(z(\overline{sy})\bigr)x-\bigl(z\overline{x}\bigr)(sy)\\
 &=-\bigl(x(ys)\bigr)z-\bigl(z(ys)\bigr)x-(zx)(sy)\\
 &=-\hup(s\circ y,x)z-\hup(s\circ y,z)x-\hup(x,z)(sy)\\
 &=-\hup(x,s\circ y)\circ z-\hup(z,s\circ y)\circ x+\hup(z,x)\circ(s\circ y)\quad 
                        \text{(as $ex=\overline{e}\circ x$)}\\
 &=\bigl(\hup(x,y)s\bigr)\circ z+\bigl(\hup(z,y)s\bigr)\circ x+\hup(z,x)\circ(s\circ y)\\
 &=\tilde\hup(x,y)\bullet z+\tilde\hup(z,y)\bullet x+\tilde\hup(z,x)\bullet y,
\end{split}
\]
as desired.

For $e,f,g\in\cE$ we get 
\[
\begin{split}
\ptriple{e,f,g}&=V_{e,sf}(g)
       =\bigl(e(\overline{sf})\bigr)g+\bigl(g(\overline{sf})\bigr)e-(g\overline{e})(sf)\\
  &=-e\overline{f}sg-g\overline{f}se-g\overline{e}sf\\
  &=\tilde\hup(e,f)\bullet g+\tilde\hup(g,f)\bullet e+\tilde\hup(g,e)\bullet f.
\end{split}
\]

Now, for $x,y\in\cW$ and $e\in\cE$,
\[
\begin{split}
\ptriple{x,e,y}&=V_{x,se}(y)
      =\bigl(x(\overline{se})\bigr)y+\bigl(y(\overline{se})\bigr)x-(y\overline{x})(se)\\
 &=-\bigl((\overline{e}s)\circ x\bigr)y-\bigl((\overline{e}s)\circ y\bigr)x-(yx)(se)\\
 &=-\hup(y,(\overline{e}s)\circ x)-\hup(x,(\overline{e}s)\circ y)-\hup(x,y)se\\
 &=-\hup(y,x)\overline{s}e-\hup(x,y)\overline{s}e-\hup(x,y)se\\
 &=\tilde\hup(y,x)\bullet e,
\end{split}
\]
as required, because $\tilde\hup(\cW,\cE)=0$. We also have
\[
\ptriple{e,x,y}=\ptriple{x,e,y}+\langle e\mid x\rangle\bullet y=\ptriple{x,e,y}=\tilde\hup(y,x)\bullet e
\]
using \eqref{eq:JTAllison4}, and
\[
\ptriple{x,y,e}=\ptriple{e,y,x}-\langle x\mid e\rangle\bullet y=\ptriple{e,y,x}=\ptriple{y,e,x}
 =\tilde\hup(x,y)\bullet e
\]
using \eqref{eq:JTAllison3}, \eqref{eq:JTAllison4}, and $\langle\cW\mid\cE\rangle=0$.

Finally, for $e,f\in\cE$ and $x\in\cW$, we get
\[
\begin{split}
\ptriple{x,e,f}&=V_{x,se}(f)
     =\bigl(x(\overline{se})\bigr)f+\bigl(f(\overline{se})\bigr)x-(f\overline{x})(se)\\
 &=-\bigl((\overline{e}s)\circ x\bigr)f-(f\overline{e}s)x-(se)\circ(fx)\\
 &=-f\circ\bigl((\overline{e}s)\circ x\bigr)+(se\overline{f})\circ  x -(se\overline{f})\circ x\\
 &=-(f\overline{e})\bullet x=\tilde\hup(f,e)\bullet x
\end{split}
\]
as desired, and also, using \eqref{eq:JTAllison4}, \eqref{eq:JTAllison3}, and 
$\langle\cW\mid\cE\rangle=0$ we have
\[
\ptriple{e,x,f}=\ptriple{x,e,f}+\langle e\mid x\rangle \bullet f=\ptriple{x,e,f}=\tilde\hup(f,e)\bullet x
\]
and 
\[
\ptriple{e,f,x}=\ptriple{x,f,e}-\langle e\mid x\rangle\bullet f=\ptriple{x,f,e}=\tilde\hup(e,f)\bullet x.
\]

We conclude that indeed, $\cA$ is an $\cS$-ternary algebra of prototypical type.

Also note that the structurable algebras of hermitian forms include the associative algebras with
involution ($\cW=0$).
\end{example}

\bigskip

%-----------------------------------------------

\section{From algebras to superalgebras via tensor categories}\label{se:tensor}

This section is devoted to reviewing the process to get Lie superalgebras from Lie algebras in the 
symmetric tensor categories $\Repap$ over fields of prime characteristic $p$. Details may be found
in \cite{Kannan,DES,ElduqueEtingofKannan} and the references there in.

Given a symmetric tensor category $\mathfrak{C}$, an \emph{operadic Lie algebra}  is an object $\frg$
in $\mathfrak{C}$, with a morphism $\beta:\frg\otimes\frg\rightarrow\frg$, such that the following
relations hold:
\begin{gather*}
\beta\circ(\id_{\frg\otimes\frg}+c_{\frg,\frg})=0\quad\text{(anticommutativity)},\\[6pt]
\beta\circ(\beta\otimes\id_\frg)\circ\Bigl(\id_{\frg\otimes\frg\otimes\frg}
+(\id_{\frg}\otimes c_{\frg,\frg})\circ (c_{\frg,\frg}\otimes\id_{\frg})\hspace*{1in}\null\\[-2pt]
\null\hspace*{1in}+(c_{\frg,\frg}\otimes\id_{\frg})\circ(\id_{\frg}\otimes c_{\frg,\frg})\Bigr)=0
\quad\text{(Jacobi identity)},
\end{gather*}
where $c_{X,Y}$ is the braiding $X\otimes Y\rightarrow Y\otimes X$ in $\mathfrak{C}$.

For instance, an operadic Lie algebra in the category of vector superspaces $\sVec$ over our ground field
$\FF$, where the braiding is given by $c(x\otimes y)=(-1)^{\lvert x\rvert\lvert y\rvert}y\otimes x$ for
homogeneous elements $x$ and $y$, where $\lvert x\rvert$ is the parity: $0$ or $1$, of $x$, is
a vector superspace $\frg=\frg\subo\oplus \frg\subuno$, with a degree preserving bilinear map
$\frg\otimes \frg\rightarrow \frg$, $x\otimes y\to [x,y]$, satisfying the conditions
\begin{gather*}
[x,y]=-(-1)^{\lvert x\rvert\lvert y\rvert}[y,x],\\
[[x,y],z]+(-1)^{\lvert x\rvert(\lvert y\rvert+\lvert z\rvert)}[[y,z],x]
+(-1)^{\lvert z\rvert(\lvert x\rvert+\lvert y\rvert)}[[z,x],y] = 0,
\end{gather*}
for homogeneous elements $x,y,z\in\frg$.

\begin{remark}\label{re:weak}
There is a subtle point here, and that is the reason to use the adjective `operadic'. In characteristic $3$,
an operadic Lie algebra in $\sVec$ is also called a \emph{weak Lie superalgebra}. A Lie superalgebra
is a weak Lie superalgebra satisfying the extra condition $[[x,x],x]=0$ for any odd element $x$. (Note
that this does not follow from the Jacobi identity in characteristic $3$, being a stronger condition
than the Jacobi identity for odd elements.) 

Given a weak Lie superalgebra $\frg$, the subspace
\[
\mathfrak{i}=\espan{[[x,x],x]\mid x\in\frg\subuno}
\]
is an ideal of $\frg$, contained in $\frg\subuno$, that satisfies $[\frg,\mathfrak{i}]=0$, due to
Jacobi identity. The quotient $\frg/\mathfrak{i}$ is a Lie superalgebra (with the same even part).
In particular, if $\frg\subuno$ is a sum of irreducible nontrivial modules for $\frg\subo$, then $\frg$ is
a Lie superalgebra.

In the same vein, in characteristic $2$, an operadic Lie algebra in $\sVec$ is a Lie superalgebra if
 it satisfies the extra
condition $[x,x]=0$ for any even element $x$.

If the characteristic is $>3$, an operadic Lie algebra in $\sVec$ is just a Lie superalgebra.
\end{remark}

Assume that the characteristic of the ground field $\FF$ is $p>0$. 
The affine group scheme $\balpha_p$,
looked at as a functor from the category of unital, commutative, associative algebras over $\FF$ to
the category of groups, is the functor that assigns to any $R$ the additive group 
$\balpha_p(R)=\{r\in R\mid r^p=0\}$, and that acts naturally on morphisms. Its representing object is
the Hopf algebra $\FF[X]/(X^p)$. A representation of $\balpha_p$ is nothing else but a vector space
endowed with a nilpotent endomorphism $\delta$ satisfying $\delta^p=0$. 
A Lie algebra in $\Repap$ is just
a Lie algebra $\cL$ over $\FF$ endowed with a nilpotent derivation $\delta\in\Der(\cL)$ with 
$\delta^p=0$.

There are, up to isomorphism, $p$ indecomposable objects in $\Repap$: $L_1,\ldots,L_p$, where 
$L_i$  denotes the indecomposable object of dimension $i$, $1\leq i\leq p$.

A morphism $f:X\rightarrow Y$ in $\Repap$ is said to be \emph{negligible} if $\trace(f\circ g)=0$ for any 
morphism $g:Y\rightarrow X$. The symmetric tensor category $\Repap$ is not semisimple. Its 
semisimplification, which is the
category with the same objects but with morphisms the classes of morphisms in $\Repap$ modulo
the subspace of negligible ones, is the Verlinde category $\Ver_p$. In $\Ver_p$, the indecomposable
objects $L_1,\ldots,L_{p-1}$ become irreducible, while $L_p$ is a zero object.

Assuming $p>2$, the full tensor subcategory of $\Ver_p$ generated by $L_1$ and $L_{p-1}$ is equivalent
to the symmetric tensor category $\sVec$ of vector superspaces (over our ground field). 
Therefore, given a Lie algebra $\cL$ over $\FF$ endowed with a nilpotent derivation 
$\delta\in\Der(\cL)$ such
that $\delta^p=0$, that is, given a Lie algebra $\cL$ in $\Repap$, we obtain, by considering 
the Lie bracket
modulo negligible homomorphisms, an operadic Lie algebra in $\Ver_p$. Moreover, this operadic Lie algebra
in $\Ver_p$ contains a subalgebra, which lies in the subcategory above, and hence it is an operadic Lie algebra  
in $\sVec$. Note that if $p=3$, the tensor categories $\Ver_3$ and $\sVec$ are equivalent,
because $L_3$ is a zero object in $\Ver_3$.

Let us make these comments more precise.

\begin{theorem}\label{th:Lss}
Let $\cL$ be a finite-dimensional Lie algebra over a field $\FF$ of characteristic $p>2$, endowed with
a nilpotent derivation $\delta\in\Der(\cL)$ such that $\delta^p=0$. 
Decompose $\cL$ in a sum of indecomposable modules for the action of $\delta$, so that 
\[
\cL=\cL_1\oplus\cdots \oplus\cL_{p},
\]
where $\cL_i$ is a sum of indecomposable modules of dimension $i$ for the action of $\delta$, 
$1\leq i\leq p$. Write $\cL\subo=\cL_1$ and choose a subspace $\cL\subuno$ of $\cL_{p-1}$ 
complementing $\delta(\cL_{p-1})$: $\cL_{p-1}=\cL\subuno\oplus\delta(\cL_{p-1})$.

Define a bracket on $\Lss\bydef\cL\subo\oplus\cL\subuno$ as follows:
\begin{equation}\label{eq:recipe}
\begin{split}
[x\subo, y\subo] &=\mathrm{proj}_{\cL\subo} ([x\subo, y\subo])\\
[x\subo, y\subuno]&=\mathrm{proj}_{\cL\subuno} ([x\subo, y\subuno])\\
[x\subuno, y\subo]&=\mathrm{proj}_{\cL\subuno} ([x\subuno, y\subo])\\
[x\subuno, y\subuno]
    &=\mathrm{proj}_{\cL\subo} \bigl([x\subuno,\delta^{p-2}(y\subuno)]\bigr)
\end{split}
\end{equation}
for all $x\subo,y\subo\in \cL\subo$ and $x\subuno,y\subuno\in \cL\subuno$, where the projections 
are taken with respect to the decomposition
\begin{equation}\label{eq:L1Lp}
\cL=\cL\subo\oplus\cL_2\oplus\cdots\oplus\cL_{p-2}\oplus\cL\subuno\oplus\delta(\cL_{p-1})\oplus\cL_p.
\end{equation}
Then $\Lss$, with this bracket, is an operadic Lie algebra in $\sVec$, which is isomorphic
to a subalgebra of the operadic Lie algebra in $\Ver_p$ obtained from the Lie algebra
$\cL$ in $\Repap$.
\end{theorem}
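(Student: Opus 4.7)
The plan is to realize $\Lss$ with the bracket~\eqref{eq:recipe} as a subalgebra sitting in the $\sVec$-subcategory of $\Ver_p$ of the operadic Lie algebra $F\cL$ obtained by applying the semisimplification functor $F\colon\Repap \to \Ver_p$ to $\cL$. To this end, decompose $\cL$ as a module over $\FF[X]/(X^p)$: $\cL \cong \bigoplus_{i=1}^p L_i \otimes M_i$, with $M_i$ a multiplicity vector space (on which $\delta$ acts trivially), so that $\cL_i = L_i \otimes M_i$. Fix a basis $e_0, \ldots, e_{p-2}$ of $L_{p-1}$ with $\delta e_k = e_{k+1}$ and $e_{p-1} = 0$; then $\cL\subo = \cL_1 \cong M_1$, and the chosen complement $\cL\subuno$ of $\delta(\cL_{p-1})$ in $\cL_{p-1}$ corresponds to $e_0 \otimes M_{p-1}$, so that $\delta^k \cL\subuno = e_k \otimes M_{p-1}$. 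In particular, the summands appearing in~\eqref{eq:L1Lp} are precisely the isotypic components of $\cL$ (with $\cL_{p-1}$ split into $\cL\subuno \oplus \delta(\cL_{p-1})$).

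Next, I would invoke the fusion rules in $\Ver_p$: among the simples $L_1, \ldots, L_{p-1}$ one has $L_1 \otimes L_i = L_i$, $L_{p-1} \otimes L_i = L_{p-i}$, and in particular $L_{p-1} \otimes L_{p-1} = L_1$. Thus the full subcategory of $\Ver_p$ on direct sums of $L_1$'s and $L_{p-1}$'s is closed under tensor products and equivalent to $\sVec$, as recalled earlier in the section. Set $W \bydef (L_1 \otimes \cL\subo) \oplus (L_{p-1} \otimes \cL\subuno) \subseteq F\cL$; by the fusion rules $F\beta$ sends $W \otimes W$ into $W$. Therefore $W$ is a subalgebra of $F\cL$ and, lying in the $\sVec$-subcategory, inherits the structure of an operadic Lie algebra in $\sVec$ with $\cL\subo$ and $\cL\subuno$ as even and odd parts.

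The third step is to match this inherited bracket on $W$ with the recipe~\eqref{eq:recipe}. The even-even and even-odd brackets are immediate: $F\beta$ composed with the projection onto the $L_1$- or $L_{p-1}$-isotypic component of $F\cL$ is exactly $\proj_{\cL\subo}$ or $\proj_{\cL\subuno}$ in the decomposition~\eqref{eq:L1Lp}. For the odd-odd bracket, the canonical $\delta$-invariant generator of the $L_1$-summand of $L_{p-1} \otimes L_{p-1}$ is $\omega = \sum_{k=0}^{p-2}(-1)^k e_k \otimes e_{p-2-k}$. Using $\delta(e_j \otimes e_{p-3-j}) = e_{j+1} \otimes e_{p-3-j} + e_j \otimes e_{p-2-j}$ one obtains $[e_{j+1} \otimes e_{p-3-j}] = -[e_j \otimes e_{p-2-j}]$ in the coinvariants of $L_{p-1} \otimes L_{p-1}$, hence $[e_k \otimes e_{p-2-k}] = (-1)^k[e_0 \otimes e_{p-2}]$ and $[\omega] = (p-1)[e_0 \otimes e_{p-2}] = -[e_0 \otimes e_{p-2}]$. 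Since $\proj_{\cL\subo} \circ \beta$ factors through these coinvariants (as $\proj_{\cL\subo}$ kills $\delta(\cL) \subseteq \cL_2 \oplus \cdots \oplus \cL_p$), the formula $\proj_{\cL\subo}([x\subuno, \delta^{p-2} y\subuno])$ records exactly the $L_1$-summand contribution to the bracket, yielding the odd-odd super-bracket on $W$ up to a global sign absorbed into the isomorphism.

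The principal obstacle is verifying that $[e_0 \otimes e_{p-2}]$ is genuinely proportional to $[\omega]$ in the coinvariants; equivalently, that it has no contribution from the $L_p$-summands of $L_{p-1} \otimes L_{p-1}$. This is confirmed by the identity $\delta^{p-1}(e_0 \otimes e_{p-2}) = 0$, which shows that $e_0 \otimes e_{p-2}$ generates only an $L_{p-1}$-cyclic submodule (not a full $L_p$), together with the direct coinvariant computation above. Granted this, the operadic Jacobi identity on $W$ transports to $\Lss$ via the isomorphism, completing the proof.
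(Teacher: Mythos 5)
Your strategy is essentially the one the paper relies on: its proof of this theorem is just a citation of \cite[Recipe 2.8, Corollary 2.9, Remark 2.11]{ElduqueEtingofKannan}, and what you do --- pass to the semisimplification $F\cL$ in $\Ver_p$, use the fusion rules to see that the sum $W$ of the $L_1$- and $L_{p-1}$-isotypic components is a subalgebra lying in the subcategory equivalent to $\sVec$, and then read off the transported bracket through the invariant vector $\omega=\sum_{k}(-1)^k e_k\otimes e_{p-2-k}$ and the coinvariants of $L_{p-1}\otimes L_{p-1}$ --- is exactly that argument. The identification of the even--even and even--odd brackets, and the observation that $\mathrm{proj}_{\cL\subo}\circ[\cdot,\cdot]$ factors through the coinvariants (because $\mathrm{proj}_{\cL\subo}\circ\delta=0$ and $\delta$ is a derivation of $\cL$), are correct.

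The step whose justification does not hold up is the last one. What you actually need is that $e_0\otimes e_{p-2}$ is \emph{not} in the image of $\delta$ on $L_{p-1}\otimes L_{p-1}$, i.e.\ that its class in the coinvariants is nonzero; only then does $[\omega]=-[e_0\otimes e_{p-2}]\neq 0$ guarantee that $\omega$ spans a genuine $L_1$ direct summand (rather than lying in the socle of the $L_p$-part) and that the recipe's formula is a nonzero multiple of the categorical odd--odd bracket. The identity $\delta^{p-1}(e_0\otimes e_{p-2})=0$ does not give this: in $L_2$, for example, both the generator and its image under $\delta$ are killed by $\delta^{p-1}$, yet only the latter lies in $\im\delta$. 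Likewise, ``no contribution from the $L_p$-summands'' is not the right criterion, since each $L_p$-summand has nonzero coinvariants; what saves you is only that $\mathrm{proj}_{\cL\subo}\circ[\cdot,\cdot]$ kills $\im\delta$, which you already have. The missing fact is true and easy to supply: $\delta$ raises by one the grading in which $e_i\otimes e_j$ has degree $i+j$, and in degree $p-2$ its image is spanned by the $p-2$ vectors $e_i\otimes e_{p-2-i}+e_{i+1}\otimes e_{p-3-i}$, all annihilated by the linear form $e_k\otimes e_{p-2-k}\mapsto(-1)^k$, which does not vanish on $e_0\otimes e_{p-2}$. Finally, since $[\omega]=-[e_0\otimes e_{p-2}]$, the clean fix for the sign is to take $-\omega$ as the splitting $L_1\hookrightarrow L_{p-1}\otimes L_{p-1}$ (a global sign in the odd--odd bracket cannot be absorbed by rescaling the odd part unless $-1$ is a square in $\FF$); with that normalization the transported bracket is literally \eqref{eq:recipe}, and both assertions of the theorem follow.
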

\begin{proof}
This is proved in \cite[Recipe 2.8, Corollary 2.9, Remark 2.11]{ElduqueEtingofKannan}.
\end{proof}

\smallskip

We are interested in the following consequence of Theorem \ref{th:Lss} for $J$-ternary algebras.

\begin{corollary}\label{co:ternary_Lie_super}
Let $\cT$ be a $J$-ternary algebra over a field $\FF$ of characteristic $3$.Then the $\ZZ/2\ZZ$-graded
vector space $\Lss(\cT)$ with 
\[
\Lss(\cT)\subo= S(\cT,\cT),\quad \Lss(\cT)\subuno=\cT,
\]
with $S(\cT,\cT)$ in \eqref{eq:STT}, is a weak Lie superalgebra with the bracket given by the usual
bracket in $S(\cT,\cT)\leq\End_\FF(\cT)$, by $[d,x]=d(x)$ for $d\in S(\cT,\cT)$ and $x\in \cT$, and by
\[
[x,y]=S(x,y)
\]
for $x,y\in\cT=\Lss(\cT)\subuno$.
\end{corollary}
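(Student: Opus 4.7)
The plan is to apply Theorem \ref{th:Lss} with $p=3$ to the Lie algebra $\cL(\cT)$ constructed in Theorem \ref{th:JTernarySL2}, using as nilpotent derivation the inner derivation $\delta\bydef\ad_{F\otimes 1}$ associated with $F=\matr{0&0\\1&0}\in\frsl(V)$. The first step is to verify that $\delta^{3}=0$ and to identify the $\delta$-isotypic components $\cL_i$ of $\cL(\cT)$, that is, the sums of indecomposable $\delta$-submodules of dimension $i$, for $i=1,2,3$. From \eqref{eq:L(T)bracket}, $\delta$ acts as $\ad_F\otimes\id$ on $\frsl(V)\otimes\cJ$, as $F\otimes\id$ on $V\otimes\cT$, and as $0$ on $S(\cT,\cT)$ (because $1\in\cJ$ is central, so $[\varphi,F\otimes 1]=F\otimes[\varphi,1]=0$ for any $\varphi\in S(\cT,\cT)$). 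Since $\ad_F$ acts as a single Jordan block of size $3$ on $\frsl(V)$ (with chain $E\mapsto -H\mapsto F\mapsto 0$ in characteristic $3$), and $F$ acts as a Jordan block of size $2$ on $V$, one concludes that $\delta^{3}=0$ and
\[
\cL_3=\frsl(V)\otimes\cJ,\qquad \cL_2=V\otimes\cT,\qquad \cL_1=S(\cT,\cT).
\]

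Fixing $V=\FF p\oplus\FF q$ with $F(p)=q$, $F(q)=0$ and $(p\mid q)=1$, we have $\delta(\cL_2)=q\otimes\cT$, so the recipe suggests the natural choice
\[
\Lss(\cT)\subo\bydef\cL_1=S(\cT,\cT),\qquad \Lss(\cT)\subuno\bydef p\otimes\cT\cong\cT.
\]
It remains to match the three brackets prescribed by \eqref{eq:recipe} with those of the statement. The even-even bracket is immediate, as $S(\cT,\cT)$ is already a subalgebra of $\cL(\cT)$ and its bracket lies in $\cL_1$. For the even-odd bracket, \eqref{eq:L(T)bracket} yields $[\varphi,p\otimes x]=p\otimes\varphi(x)\in\Lss(\cT)\subuno$, so no projection is needed, and under the identification $p\otimes\cT\cong\cT$ this becomes $\varphi(x)$, as required.

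The main verification is the odd-odd bracket, computed via the recipe as the projection of $[p\otimes x,\delta(p\otimes y)]=[p\otimes x,q\otimes y]$ onto $\cL_1$. Using \eqref{eq:L(T)bracket},
\[
[p\otimes x,q\otimes y]=\gamma_{p,q}\otimes K(x,y)+S(x,y).
\]
A direct check gives $\gamma_{p,q}(p)=-p$ and $\gamma_{p,q}(q)=q$, so $\gamma_{p,q}=-H\in\frsl(V)$, and hence the first summand lies in $\frsl(V)\otimes\cJ=\cL_3$; in the decomposition \eqref{eq:L1Lp} this summand has zero component in $\cL\subo$, and the projection returns exactly $S(x,y)$, matching the formula $[x,y]=S(x,y)$ in the statement. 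Theorem \ref{th:Lss} then ensures that the resulting structure is an operadic Lie algebra in $\sVec$, i.e., a weak Lie superalgebra, which concludes the argument. The only nontrivial step is the identification of $\gamma_{p,q}$ as an element of $\frsl(V)$ of $\delta$-size $3$, so that $\gamma_{p,q}\otimes K(x,y)$ falls in $\cL_3$ and is killed by the projection onto $\cL\subo$; the rest is bookkeeping.
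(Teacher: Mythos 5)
Your proposal is correct and follows essentially the same route as the paper: apply Theorem \ref{th:Lss} with $p=3$ to $\cL(\cT)$ with $\delta=\ad_{F\otimes\id}$, identify $\frsl(V)\otimes\cJ$, $V\otimes\cT$, $S(\cT,\cT)$ as the sums of indecomposables of dimensions $3$, $2$, $1$, choose $\cL\subuno=p\otimes\cT$, and read off the three brackets from \eqref{eq:L(T)bracket}. Your explicit identification $\gamma_{p,q}=-H$ just spells out why $\gamma_{p,q}\otimes K(x,y)$ is killed by the projection onto $S(\cT,\cT)$, which the paper states more briefly; otherwise the two arguments coincide.
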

\begin{proof}
Fix a symplectic basis $\{p,q\}$ in the two-dimensional vector space $V$ in Theorem \ref{th:JTernarySL2}, that is, $(p\mid q)=1$;  take the endomorphism $F\in\frsl(V)$ with coordinate matrix
$\left(\begin{smallmatrix}0&0\\ 1&0\end{smallmatrix}\right)$ in this basis (i.e., $p\to q\to 0$)  and consider the Lie algebra $\cL(\cT)$ in \eqref{eq:L(T)}. Then $\delta=\ad_{F\otimes \id}$ is a nilpotent
derivation of $\cL(\cT)$ with $\delta^3=0$. The summand $\frsl(V)\otimes\cJ$ in \eqref{eq:L(T)} is a sum
of indecomposable modules of dimension $3$ for the action of $\delta$, the summand $V\otimes\cT$ is
a sum of indecomposable modules of dimension $2$, while $S(\cT,\cT)$ is annihilated by $\delta$, and 
hence it is a sum of trivial (indecomposable) modules of dimension $1$. Besides, 
$V\otimes\cT=(p\otimes \cT)+\delta(V\otimes\cT)$, so that we get a decomposition as in
\eqref{eq:L1Lp}, with 
\[
\cL\subo=S(\cT,\cT),\quad \cL\subuno=p\otimes\cT.
\] 
Note that $S(\cT,\cT)$ is a subalgebra of $\cL(\cT)$, and for $d\in S(\cT,\cT)$ and $x\in \cT$, 
$[d,p\otimes x]=p\otimes d(x)$, by \eqref{eq:L(T)bracket}, which also gives
\[
\mathrm{proj}_{\cL\subo} \bigl([p\otimes x,\delta(p\otimes y)]\bigr)
=\mathrm{proj}_{S(\cT,\cT)}[p\otimes x,q\otimes y]=S(x,y),
\]
for any $x,y\in\cT$.
The result then follows at once from Theorem \ref{th:Lss}, by identifying $p\otimes\cT$ with $\cT$ by
means of $p\otimes x\leftrightarrow x$ for $x\in\cT$.
\end{proof}

\smallskip

\begin{remark}
Corollary \ref{co:ternary_Lie_super} can be proved without using the semisimplification process, 
but it has been this process the one that has allowed to realize that it was possible to
define $\Lss(\cT)$ and to check that this is a weak Lie superalgebra.

On the other hand, $\Lss(\cT)$ may fail to be a bona fide Lie superalgebra. For example, let $\cT$ be
a two-dimensional vector space over a field $\FF$ of characteristic $3$ with basis $\{x,y\}$ and
triple product determined by $xxx=y$ and $uvw=0$ if at least one of $u$, $v$, or $w$ equals $y$.
With this triple product, $\cT$ becomes trivially a $J$-ternary algebra. Besides, $S(x,x)$ takes $x$ to 
$2y=-y$ and hence we have $[[x,x],x]=S(x,x)x=-y\neq 0$ in $\Lss(\cT)$. Therefore, the weak
Lie superalgebra $\Lss(\cT)$ is not a Lie superalgebra.
\end{remark}

\bigskip

\section{From \texorpdfstring{$J$}{J}-ternary algebras to Lie superalgebras in characteristic 
\texorpdfstring{$3$}{3}}\label{se:JternarySuper}

Corollary \ref{co:ternary_Lie_super} shows us how to get Lie superalgebras from $J$-ternary algebras
over fields of characteristic $3$. This section will give examples of this situation.

It must be remarked that there is no known classification of the simple finite-dimensional 
$J$-ternary algebras over fields of characteristic $3$. 
The known classification \cite{Hein_degree>2}, \cite{Hein_degree2}, 
needs characteristic $\neq 2,3$. However, all the simple algebras that appear in these
classifications are either of prototypical type or they are obtained from a structurable algebra
with a skew-symmetric element with invertible left multiplication as in 
Theorem \ref{th:structurable_Jternary}. Again, there is no known classification of the simple
finite-dimensional structurable algebras over fields of characteristic $3$, but the known simple
algebras in other characteristics make sense in characteristic $3$ (see 
\cite[\S 6]{AllisonFaulkner_Steinberg}).

\subsection{Lie superalgebras from prototypical \texorpdfstring{$J$}{J}-ternary algebras}\label{ss:proto}

Let us start with the Lie superalgebras obtained from simple $J$-ternary algebras fitting in
 the prototypical example (Example \ref{ex:prototypical}).

\begin{theorem}\label{th:proto_super}
Let $(\cA,-)$ be a finite-dimensional simple algebra with involution (i.e., there is no proper ideal closed
under the involution!) over an algebraically closed field $\FF$ of characteristic $3$. Let $\cT$ be a left
$\cA$-module and let $\hup:\cT\times\cT\rightarrow\cA$ be a nondegenerate skew-hermitian form. Then
$\cT$ is a $J$-ternary algebra with triple product
\[
\ptriple{x,y,z}=\hup(x,y)z+\hup(z,x)y+\hup(z,y)x
\]
for $x,y,z\in\cT$. Let $\Lss(\cT)$ be the associated weak Lie superalgebra in 
Corollary \ref{co:ternary_Lie_super}. Then $\Lss(\cT)$ is a Lie superalgebra and the following conditions 
hold:
\begin{itemize}
\item
If the involution $-$ on $\cA$ is of the first kind, then there are $n,m\in\ZZ_{> 0}$, with $m$ even,
 such that $\Lss(\cT)$ is isomorphic to the orthosymplectic Lie superalgebra $\frosp(n\vert m)$.
\item 
If the involution $-$ on $\cA$ is of the second kind, then there are $n,m\in\ZZ_{> 0}$ such
that $\Lss(\cT)$ is isomorphic to the projective special linear Lie superalgebra $\frpsl(n\vert m)$.
\end{itemize}
\end{theorem}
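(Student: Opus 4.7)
The plan is to reduce to the classification of finite-dimensional simple algebras with involution over an algebraically closed field of characteristic $\neq 2$: up to isomorphism,
\[
(\cA,-)\cong\begin{cases}
(\Mat_n(\FF),\,X\mapsto g^{-1}X^tg),\ g^t=g & \text{(first kind, orthogonal)},\\
(\Mat_{2n}(\FF),\,X\mapsto g^{-1}X^tg),\ g^t=-g & \text{(first kind, symplectic)},\\
(\Mat_n(\FF)\oplus\Mat_n(\FF)^{\mathrm{op}},\,\text{exchange}) & \text{(second kind)}.
\end{cases}
\]
In each case, $\cT$ is a sum of the simple left $\cA$-modules, and a nondegenerate skew-hermitian form $\hup$ can be brought to a canonical form via a Witt-type theorem. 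The key computation is the explicit formula
\[
S(x,y)z=L(x,y)z+L(y,x)z=\bigl(\hup(x,y)+\hup(y,x)\bigr)z+2\hup(z,x)y+2\hup(z,y)x.
\]
A straightforward check shows that each $S(x,y)$ is $\cA$-linear and preserves $\hup$, so $S(\cT,\cT)$ sits inside the unitary Lie algebra $\mathfrak{u}(\cT,\hup)\bydef\{T\in\End_\cA(\cT)\mid \hup(Tx,y)+\hup(x,Ty)=0\}$. Using (for instance) the explicit description of $\cL(\cT)$ provided by Theorem \ref{th:JTernarySL2} together with a dimension count, one identifies $S(\cT,\cT)=\mathfrak{u}(\cT,\hup)$ (or its derived algebra modulo center, in the second-kind case).

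Next I would invoke the standard description of $\mathfrak{u}(\cT,\hup)$ over an algebraically closed field. In the first-kind orthogonal case, the natural $\cA$-module $\FF^n$ carries a symmetric form coming from the involution, while $\hup$ induces a skew-symmetric form on the multiplicity space $\FF^m$ (necessarily with $m$ even), so $\cT\cong\FF^n\otimes\FF^m$ with $\mathfrak{u}(\cT,\hup)\cong\frso_n\oplus\frsp_m$. This is precisely the even part of $\frosp(n\vert m)$, with odd part $\FF^n\otimes\FF^m$. The symplectic-involution case is analogous (with roles interchanged). In the second-kind case, the two central idempotents of $\cA$ decompose $\cT=\cT_+\oplus\cT_-$, and $\hup$ pairs these summands; modulo the center one recovers the even part $\frsl_n\oplus\frsl_m$ of $\frpsl(n\vert m)$, with $\cT$ playing the role of the odd part $\FF^n\otimes(\FF^m)^*\oplus(\FF^n)^*\otimes\FF^m$.

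It remains to match the brackets and verify genuineness. By construction, the even-even and even-odd brackets of $\Lss(\cT)$ are the Lie bracket in $\End_\FF(\cT)$ and the natural action, which coincide with those of $\frosp(n\vert m)$ or $\frpsl(n\vert m)$. The odd-odd bracket $[x,y]=S(x,y)$ is, up to scalar, the unique $\mathfrak{u}(\cT,\hup)$-equivariant symmetric map $\cT\otimes\cT\to\mathfrak{u}(\cT,\hup)$, hence agrees with the odd-odd bracket of the target Lie superalgebra. Finally, $\cT$ is a sum of natural modules for the classical factors of $S(\cT,\cT)$, so it decomposes as a sum of irreducible nontrivial $S(\cT,\cT)$-modules; by Remark \ref{re:weak}, the weak Lie superalgebra $\Lss(\cT)$ is then a genuine Lie superalgebra.

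The main obstacle is not any single step but the careful bookkeeping in each case: determining $(n,m)$ from the multiplicity of $\cT$ and the rank of $\cA$, handling the passage to the quotient by the center in the second-kind case (which is slightly delicate in characteristic $3$, where $\frsl_n$ has a one-dimensional center exactly when $3\mid n$), and ensuring that the identification of $S(\cT,\cT)$ with the claimed classical Lie algebra is exhaustive. An efficient way to sidestep most of the computation is to realize $\cL(\cT)$ directly as the Lie algebra of operators on $V\oplus\cT$ preserving the natural form (symplectic$\oplus$skew-hermitian), recognize the $\SLs_2$-action, and then observe that semisimplification of this classical Lie algebra in $\Repa_3$ produces the corresponding classical Lie superalgebra.
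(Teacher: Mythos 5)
Your overall strategy (classify $(\cA,-)$, write $\cT$ as a simple module tensored with a multiplicity space, compute the operators $S(x,y)$, and match them with the even part of $\frosp$ or $\frpsl$) is the paper's, but the step that identifies the even part rests on a false claim. The operator $S(x,y)$ is neither $\cA$-linear nor $\hup$-skew: writing $s=\hup(x,y)+\hup(y,x)=\hup(x,y)-\overline{\hup(x,y)}\in\Skew(\cA,-)$, your own formula gives $S(x,y)z=sz+2\bigl(\hup(z,x)y+\hup(z,y)x\bigr)$; the second summand does lie in your $\fru(\cT,\hup)=\{T\in\End_\cA(\cT)\mid \hup(Tz,w)+\hup(z,Tw)=0\}$, but the first is left multiplication by a (generally non-central) skew element, so $S(x,y)(az)-aS(x,y)(z)=[s,a]z$ and $\hup(S(x,y)z,w)+\hup(z,S(x,y)w)=[s,\hup(z,w)]$. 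Hence $S(\cT,\cT)\not\subseteq\fru(\cT,\hup)$. Moreover, with your definition, in the first-kind orthogonal case $\cA=\End_\FF(X)$, $\cT=X\otimes Y$, one has $\End_\cA(\cT)\cong\End_\FF(Y)$ and $\fru(\cT,\hup)\cong\frsp(Y)$ only; the ``standard description'' $\fru(\cT,\hup)\cong\frso_n\oplus\frsp_m$ you invoke contradicts that definition, and the proposed dimension count would refute, not prove, the equality $S(\cT,\cT)=\fru(\cT,\hup)$ (an even part equal to $\frsp_m$ cannot give $\frosp(n\vert m)$). The correct statement, which is exactly what the paper establishes by computing $S(x_1\otimes y_1,x_2\otimes y_2)=\sigma_{x_1,x_2}\otimes\bup_Y(y_1,y_2)\id_Y+\bup_X(x_1,x_2)\id_X\otimes\gamma_{y_1,y_2}$, is that $S(\cT,\cT)$ is spanned by the module action of $\Skew(\cA,-)$ together with $\fru(\cT,\hup)$, i.e.\ it is the full even part $\frso(X)\oplus\frsp(Y)$ (and, in the second-kind case, the image of the even part of $\frsl(X\vert Y)$ acting on the odd part, i.e.\ the even part of $\frpsl$).

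A second gap is the assertion that the odd--odd bracket is ``up to scalar the unique'' equivariant symmetric map $\cT\otimes\cT\to S(\cT,\cT)$. It is not: in the orthogonal case $S^2(X\otimes Y)\cong(S^2X\otimes S^2Y)\oplus(\Lambda^2X\otimes\Lambda^2Y)$, and the space of $\bigl(\frso(X)\oplus\frsp(Y)\bigr)$-equivariant symmetric maps into $\frso(X)\oplus\frsp(Y)$ is two-dimensional (one component into each summand); the second-kind case is analogous. So abstract uniqueness does not pin down the bracket: you must either fix the ratio of the two components using the Jacobi identity, or do what the paper does and check directly that $S(x_1\otimes y_1,x_2\otimes y_2)$ coincides with $\ad\bigl([\Lambda_{x_1,y_1},\Lambda_{x_2,y_2}]\bigr)\vert_\cT$ in $\frosp(X\vert Y,\boldbup)$ (respectively with the adjoint action of $[x\otimes\omega,y\otimes\varphi]$ in $\frgl(X\vert Y)$). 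Your genuineness argument via Remark \ref{re:weak} is fine, though once the isomorphism with $\frosp(n\vert m)$ or $\frpsl(n\vert m)$ is in place it is automatic; the closing suggestion to ``sidestep the computation'' by semisimplifying a classical Lie algebra is only a sketch and is not justified as written.
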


\begin{proof}

The proof will be done in several steps.

Assume first that the involution is of the first kind and orthogonal. 
Then the algebra $\cA$ is simple and there exists
a finite-dimensional vector space $X$ over $\FF$, endowed with a nondegenerate symmetric bilinear form
$\bup_X:X\times X\rightarrow\FF$, such that $(\cA,-)$ is isomorphic to $\left(\End_\FF(X),-\right)$, with the
involution given by the adjunction relative to $\bup_X$:
\[
\bup_X\bigl(f(u),v\bigr)=b_X(u,\overline{f}(v)\bigr)
\]
for any $u,v\in X$ and $f\in\End_\FF(X)$. We may then assume that $\cA=\End_\FF(X)$ with this
involution. 

Any finite-dimensional simple left module is, up to isomorphism, of the form $\cT=X\otimes Y$ for a 
finite-dimensional vector space $Y$, where the action of $\cA$ is on $X$. 
Let $\hup:\cT\times\cT\rightarrow\cA$ be a 
nondegenerate skew-hermitian form. The left $\cA$ module $\cT$ is also a right module with 
$ta\bydef \overline{a}t$ for any $a\in\cA$ and $t\in\cT$. Then $\hup$ may be seen as a homomorphism
of $\cA$-bimodules $\hup:\cT\otimes\cT\rightarrow\cA$. Up to scalars, the unique homomorphism
of $\cA$-bimodules $X\otimes X\rightarrow \cA=\End_\FF(X)$ is given by 
$x_1\otimes x_2\mapsto x_1 \bup_X(x_2,\cdot)$, for $x_1,x_2\in X$. As a consequence, there exists a
bilinear form $\bup_Y:Y\times Y\rightarrow \FF$ such that
\[
\hup(x_1\otimes y_1,x_2\otimes y_2)=\bup_Y(y_1,y_2)x_1\bup_X(x_2,\cdot),
\]
for any $x_1,x_2\in X$ and $y_1,y_2\in Y$. The fact that $\hup$ is nondegenerate and skew-symmetric
implies that $\bup_Y$ is a nondegenerate skew-symmetric bilinear form. Let us compute the action
of the operators 
\[
S(u,v)=L(u,v)+L(v,u)=\ptriple{u,v,\cdot}+\ptriple{v,u,\cdot}\in\End_\FF(\cT),
\] 
for $u_1=x_1\otimes y_1$ and $u_2=x_2\otimes y_2$ as above:
\[
\begin{split}
S(x_1\otimes y_1&,x_2\otimes y_2)(x\otimes y)\\
  &=\bigl(\hup(x_1\otimes y_1,x_2\otimes y_2)+\hup(x_2\otimes y_2,x_1\otimes y_1)\bigr)(x\otimes y) \\
   &\qquad\qquad          +2\bigl(\hup(x\otimes y,x_1\otimes y_1)(x_2\otimes y_2)
                  +\hup(x\otimes y,x_2\otimes y_2)(x_1\otimes y_1)\bigr)\\
 &=\bigl(\hup(x_1\otimes y_1,x_2\otimes y_2)+\hup(x_2\otimes y_2,x_1\otimes y_1)\bigr)(x\otimes y) \\
     &\qquad\qquad         -\bigl(\hup(x\otimes y,x_1\otimes y_1)(x_2\otimes y_2)
                  +\hup(x\otimes y,x_2\otimes y_2)(x_1\otimes y_1)\bigr)\\
 &=x_1\bup_X(x_2,x)\otimes \bup_Y(y_1,y_2)y + x_2\bup_X(x_1,x)\otimes \bup_Y(y_2,y_1)y\\
 &\qquad\qquad
    -x\bup_X(x_1,x_2)\otimes\bup_Y(y,y_1)y_2 - x\bup_X(x_2,x_1)\otimes\bup_Y(y,y_2)y_1\\
&=\bigl(x_1\bup_X(x_2,x)-x_2\bup_X(x_1,x)\bigr)\otimes \bup_Y(y_1,y_2)y\\
 &\qquad\qquad +\bup_X(x_1,x_2)x\otimes\bigl(\bup_Y(y_1,y)y_2+\bup_Y(y_2,y)y_1\bigr)
\end{split}
\]
for $x\in X$ and $y\in Y$. Hence, we have
\begin{equation}\label{eq:Sx1y1x2y2}
S(x_1\otimes y_1,x_2\otimes y_2)=\sigma_{x_1,x_2}\otimes\bup_Y(y_1,y_2)\id_Y
  +\bup_X(x_1,x_2)\id_X\otimes \gamma_{y_1,y_2}
\end{equation}
where 
\[
\sigma_{x_1,x_2}=x_1\bup_X(x_2,\cdot)-x_2\bup_X(x_1,\cdot)\quad \text{and}\quad 
\gamma_{y_1,y_2}=y_1\bup_Y(y_2,\cdot)+y_2\bup_Y(y_1,\cdot).
\] 
(Note that the operators
$\sigma_{x_1,x_2}$ span the orthogonal Lie algebra of $(X,\bup_X)$, and the operators
$\gamma_{y_1,y_2}$ span the symplectic Lie algebra of $(Y,\bup_Y)$.)

Consider the vector superspace with even part $X$, odd part $Y$, and endowed with the supersymmetric
even bilinear form $\boldbup$ such that 
\[
\boldbup\vert_X=\bup_X\quad \text{and}\quad \boldbup\vert_Y=-\bup_Y.
\]
The corresponding orthosymplectic Lie superalgebra is given by
\[
\frosp(X\vert Y,\boldbup)_i=\{\varphi\in\frgl(X\vert Y)_i\mid 
\boldbup(\varphi(u),v)+(-1)^{i\lvert u\rvert}\boldbup(u,\varphi(v))=0\ \forall u,v\in X\cup Y\}
\]
for $i=\overline{0},\overline{1}$, where $\lvert u\rvert$ is the parity of the homogeneous element $u$.
This is spanned by the operators
\[
\Lambda_{u,v}=u\boldbup(v,\cdot)-(-1)^{\lvert u\rvert\lvert v\rvert}v\boldbup(u,\cdot),
\]
for $u,v\in X\cup Y$. The odd part of 
$\frosp(X\vert Y,\boldbup)$ is $\Lambda_{X,Y}$ which can be identified with $\cT=X\otimes Y$ by 
means of $\Lambda_{x,y}\leftrightarrow x\otimes y$. An easy computation gives
\[
\begin{split}
[\Lambda_{x_1,y_1},\Lambda_{x_2, y_2}]
 &=\left(x_1\boldbup(y_1,\cdot)-y_1\boldbup(x_1,\cdot)\right)\circ
      \left(x_2\boldbup(y_2,\cdot)-y_2\boldbup(x_2,\cdot)\right)\\
 &\qquad\qquad +\left(x_2\boldbup(y_2,\cdot)-y_2\boldbup(x_2,\cdot)\right)\circ
       \left(x_1\boldbup(y_1,\cdot)-y_1\boldbup(x_1,\cdot)\right)\\
 &= - x_1\boldbup(y_1,y_2)\boldbup(x_2,\cdot)-y_1\boldbup(x_1,x_2)\boldbup(y_2,\cdot)\\
 &\qquad\qquad - x_2\boldbup(y_2,y_1)\boldbup(x_1,\cdot)-y_2\boldbup(x_2,x_1)\boldbup(y_1,\cdot)\\
 &=\bup_Y(y_1,y_2)\sigma_{x_1,x_2}+\bup_X(x_1,x_2)\gamma_{y_1,y_2}
\end{split}
\]
for $x_1,x_2\in X$, $y_1,y_2\in Y$. Comparing with \eqref{eq:Sx1y1x2y2}, it turns out that, after 
identifying $\cT$ with $\frosp(X\vert Y,\boldbup)\subuno$ as above, we have
\[
S(x_1\otimes y_1,x_2\otimes y_2)
=\ad([\Lambda_{x_1,y_1},\Lambda_{x_2, y_2}])\vert_\cT
\]
and this shows that $S(\cT,\cT)$ coincides with $\ad(\frosp(X\vert Y,\boldbup)\subo)\vert_\cT$ and,
since the action of $\frosp(X\vert Y,\boldbup)\subo$ on $\frosp(X\vert Y,\boldbup)\subuno$ is faithful, 
that $\Lss(\cT)=S(\cT,\cT)\oplus\cT$ is isomorphic to $\frosp(X\vert Y,\boldbup)$.

\smallskip

The situation for a symplectic involution is completely analogous.

\smallskip

Let us consider finally the situation of a second kind involution. In this case, up to isomorphism,
we may assume $\cA=\cB\oplus\cB^{\op}$, where $\cB^{\op}$ is the opposite algebra, and that
the involution is given by $\overline{b_1+b_2^{\op}}=b_2+b_1^{\op}$, where we write $b^{\op}$ for
the element $b\in\cB$ when considered as an element of $\cB^{\op}$. A left $\cA$-module $\cT$ is then,
up to isomorphism, the direct sum of a left $\cB$-module and a left $\cB^{\op}$-module
(i.e., a right $\cB$-module): $\cT=\cM\oplus \cN$.  Denote
by $\rho\colon\cA\rightarrow\End_\FF(\cT)$ the associated representation.
Write $e_1=(1,0)$ and $e_2=(0,1^{\op})$.

If $\hup:\cT\times\cT\rightarrow \cA$ is a skew-hermitian form, then we get
$\hup(\cM,\cM)=\hup(e_1\cT,e_1\cT)=e_1\hup(\cT,\cT)\overline{e_1}\subseteq e_1\cA e_2=0$, and also
$\hup(\cN,\cN)=0$. Moreover, we have $\hup(\cM,\cN)\subseteq e_1\cA e_1=\cB$ and
$\hup(\cN,\cM)\subseteq \overline{\cB}=\cB^{\op}$. In other words, there is a homomorphism of $\cB$-bimodules
$\mu\colon \cM\otimes \cN\rightarrow \cB$ such that $\hup(m,n)=\mu(m\otimes n)\in\cB$ and 
$\hup(n,m)=-\overline{\hup(m,n)}=-\overline{\mu(m\otimes n)}=-\mu(m\otimes n)^{\op}\in\cB^{\op}$.

For $x,y,z\in \cT$, the operator $S(x,y)\in\End_\FF(\cT)$ works as follows:
\[
\begin{split}
S(x,y)(z)&=\ptriple{x,y,z}+\ptriple{y,x,z}\\
 &=\bigl(\hup(x,y)+\hup(y,x)\bigr)z+2\bigl(\hup(z,x)y+\hup(z,y)x\bigr)\\
 &=\bigl(\hup(x,y)-\overline{\hup(x,y)}\bigr)z-\bigl(\hup(z,x)y+\hup(z,y)x\bigr).
\end{split}
\]
For $x,y\in\cM$, $\hup(x,y)=0$ and $\hup(z,x)y,\hup(z,y)x\in\cB^{\op}\cM=0$, and the same
happens for $x,y\in\cN$. Now, for $x\in\cM$ and $y\in\cN$, $\hup(x,y)=\mu(x\otimes y)\in\cB$. 
In other words, we have, for $x\in\cM$ and $y\in\cN$:
\begin{multline}\label{eq:SxyMN}
S(x,y)=\rho\bigl(\hup(x,y)-\hup(x,y)^{\op}\bigr)-\hup(\cdot,x)y-\hup(\cdot, y)x\colon\\
z\mapsto 
\begin{cases}
\mu(x\otimes y)z-\mu(z\otimes y)x,&\text{if $z\in\cM$,}\\
-z\mu(x\otimes y)+y\mu(x\otimes z),&\text{if $z\in\cN$.}
\end{cases}\quad\null
\end{multline}

Now, $\cB$ is a finite-dimensional simple algebra, so we may assume $\cB=\End_\FF(X)$ for a
finite-dimensional vector space $X$. The finite-dimensional left $\cB$-module $\cM$ is, 
up to isomorphism, of
the form $X\otimes Z$ for a vector space $Z$, where the action on $\cB$ is on $X$. In the same vein,
the right $\cB$-module $\cN$ is, up to isomorphism, of the form  $Y\otimes X^*$, where the action
of $\cB$ is on the dual $X^*$, that is, $\varphi b\colon x\mapsto \varphi(bx)$ for 
$\varphi\in X^*$, $b\in\cB$, and $x\in X$. As a bimodule for $\cB$, $X\otimes X^*$ is isomorphic to 
$\cB=\End_\FF(X)$, with $x\otimes\varphi$ identified with the linear map $z\mapsto x\varphi(z)$,
and the homomorphism of $\cB$-bimodules $\mu:\cM\otimes\cN\rightarrow \cB$ is of the form
\[
(x\otimes z) \otimes (y\otimes\varphi)\mapsto \eta(z,y)x\otimes\varphi,
\] 
for $x\in X$, $\varphi\in X^*$,
$y\in Y$ and $z\in Z$, for a bilinear form $\eta:Z\times Y\rightarrow \FF$.

The skew-hermitian form $\hup$ is nondegenerate if and only if so is the bilinear form $\eta$, and then
$\eta$ can be used to identify $Z$ with $Y^*$. 

Summarizing, for involutions of the second kind, we may assume that $\cA=\cB\oplus\cB^{\op}$, the
involution being the exchange involution, with $\cB=\End_\FF(X)$ for a finite-dimensional vector
space $X$, while $\cT=(X\otimes Y^*)\oplus (Y\otimes X^*)$ for another finite-dimensional vector
space $Y$. Moreover, the skew-hermitian form $\hup$ satisfies that $X\otimes Y^*$ and $Y\otimes X^*$
are isotropic for $\hup$, while we have
\[
\hup(x\otimes \omega,y\otimes \varphi)
=\mu\bigl((x\otimes \omega)\otimes(y\otimes \varphi)\bigr)=\omega(y)x\otimes \varphi,
\]
for $x\in X$, $\varphi\in X^*$, $y\in Y$, and $\omega\in Y^*$.

Equation \eqref{eq:SxyMN} takes now the following form:
\begin{equation}\label{eq:Sxomegayfi}
S(x\otimes \omega,y\otimes\varphi)\colon
  \begin{cases}
   x'\otimes\omega'\mapsto \omega(y)\varphi(x')x\otimes\omega'-\omega'(y)\varphi(x)x'\otimes\omega,&\\
   y'\otimes \varphi'\mapsto -\omega(y)\varphi'(x)y'\otimes\varphi+\omega(y')\varphi(x)y\otimes\varphi',&
\end{cases}
\end{equation}
for $x,x'\in X$, $\varphi,\varphi'\in X^*$, $y,y'\in Y$, and $\omega,\omega'\in Y^*$.

Consider the vector superspace with even part $X$ and odd part $Y$, and the corresponding general
linear Lie superalgebra $\frgl(X\vert Y)=\End_\FF(X\oplus Y)$, with its natural $\ZZ/2\ZZ$-grading and
bracket. Use the natural identifications
\[
\begin{split}
\frgl(X\vert Y)\subo&=(X\otimes X^*)\oplus (Y\otimes Y^*),\\
\frgl(X\vert Y)\subuno&=(X\otimes Y^*)\oplus (Y\otimes X^*)\,(=\cT).
\end{split}
\]
Note that $\frgl(X\vert Y)$ is $3$-graded with 
\[
\frgl(X\vert Y)_{-1}=X\otimes Y^*, \quad
\frgl(X\vert Y)_0=(X\otimes X^*)\oplus (Y\otimes Y^*),\quad 
 \frgl(X\vert Y)_1=Y\otimes X^*.
\]
For any $x\in X$, $\varphi\in X^*$, $y\in Y$, and $\omega\in Y^*$, the bracket in 
$\frgl(X\vert Y)$ of the odd elements $x\otimes\omega$ and $y\otimes\varphi$ is
\[
[x\otimes\omega,y\otimes\varphi]
  =(x\otimes\omega)\circ(y\otimes\varphi) + (y\otimes\varphi)\circ(x\otimes\omega)
  =\omega(y)x\otimes\varphi +\varphi(x) y\otimes\omega.
\]
These elements span the even part of the special linear Lie superalgebra $\frsl(X\vert Y)$.

The action of $[x\otimes\omega,y\otimes\varphi]$ on the elements of $\frgl(X\vert Y)\subuno=\cT$ works 
as follows:
\[
\begin{split}
[[x\otimes\omega,y\otimes\varphi],x'\otimes\omega']
 &=[\omega(y)x\otimes\varphi +\varphi(x) y\otimes\omega,x'\otimes\omega']\\
 &=\omega(y)\varphi(x')x\otimes\omega'-\varphi(x)\omega'(y)x'\otimes\omega,\\
[[x\otimes\omega,y\otimes\varphi],y'\otimes\varphi']
 &=[\omega(y)x\otimes\varphi +\varphi(x) y\otimes\omega,y'\otimes\varphi']\\
 &=\varphi(x)\omega(y')y\otimes\varphi'-\omega(y)\varphi'(x)y'\otimes\varphi,
\end{split}
\]
for $x,x'\in X$, $\varphi,\varphi'\in X^*$, $y,y'\in Y$, and $\omega,\omega'\in Y^*$,
so we conclude that 
\[
S(x\otimes\omega,y\otimes\varphi)=\ad([x\otimes\omega,y\otimes\varphi])\vert_\cT,
\]
and this shows that $S(\cT,\cT)$ equals $\ad(\frsl(X\vert Y)\subo)\vert_\cT$, which is isomorphic
to the even part of the projective special linear Lie superalgebra $\frpsl(X\vert Y)$, and that
$\Lss(\cT)=S(\cT,\cT)\oplus\cT$ is isomorphic to the projective special linear Lie superalgebra
$\frpsl(X\vert Y)$, as required.
\end{proof}

\medskip

\subsection{Lie superalgebras from simple structurable algebras}\label{ss:structurable}

Now we will turn our attention to the $J$-ternary algebras obtained from finite-dimensional simple
structurable algebras with a suitable regular skew-symmetric element 
(Theorem \ref{th:structurable_Jternary}). Example \ref{ex:hermitian_prototypical} shows that, if our structurable
algebra is an associative algebra with involution or the structurable algebra of a hermitian form, the
corresponding $J$-ternary algebra is prototypical. If the structurable algebra is a Jordan algebra (with
identity involution), then there are no nonzero skew-symmetric elements.

\smallskip

On the other hand, if $(\cA,-)$ is a simple finite-dimensional structurable algebra with one-dimensional
space of skew-symmetric elements, then $\cS(\cA,-)=\FF s$ for a skew-symmetric element
$s$ with invertible $L_s$ and with $s^2\in\FF 1$. Denote by $\cT$ the associated $J$-ternary
algebra in Theorem \ref{th:structurable_Jternary}. This last theorem
 and \eqref{eq:JTAllison3} show that the operators 
$K(x,y)\colon z\mapsto \ptriple{x,z,y}-\ptriple{y,z,x}$ are scalar multiples of the identity. Hence,
the Jordan algebra
$\cJ=\FF \id+K(\cT,\cT)$ in Theorem \ref{th:JTernarySL2} is just the one-dimensional unital Jordan algebra.
The outcome is that $\cT$ is then a so called \emph{symplectic triple system}, and the associated
Lie superalgebra $\Lss(\cT)$ has been computed in \cite[Theorem 3.2]{Elduque_NewSimple3}.
The Lie superalgebras that appear are the Brown superalgebra $\frbr(2;3)$ of superdimension 
$(10\vert 8)$, i.e., the even part has dimension $10$ and the odd part has dimension $8$
(see \cite{Elduque_Models}), and the Lie superalgebras $\frg(r,6)$, $r=1,2,4,8$, in the ``supermagic
square'' in \cite{CunhaElduque_Extended} (see \cite[Corollary 5.9]{CunhaElduque_Jordan}). The notation
above for these superalgebras follows \cite{BouarroudjGrozmanLeites}.

\smallskip

We are left with two types of simple structurable algebras, assuming the ground field is 
algebraically closed: the tensor product of two unital composition algebras, with their natural involutions,
such that one of the factors is a Cayley algebra (dimension $8$), as otherwise the algebra is associative
and hence the associated $J$-ternary algebra is prototypical, or the $35$-dimensional Smirnov algebra
\cite{Smirnov}.

For the
Smirnov algebra, this may be defined as the subalgebra $T(\cC)$ of the tensor product $\cC\otimes\cC$, 
where $\cC$ is the Cayley algebra over $\FF$, obtained as the kernel of the map $S^2(\cC)\rightarrow \FF$
that takes $x\otimes x$ to $\nup(x)$ (its norm), where $S^2(\cC)$ denotes the subspace of symmetric
tensors (see \cite{AllisonFaulkner_Smirnov}). The skew-symmetric elements of $T(\cC)$ are of the form
$s\otimes 1+1\otimes s$ for $s\in \cS(\cC,-)$ (skew-symmetric in $\cC$). Due to the skew-alternativity, 
for any $r\in\cS(\cC,-)$, the next product does not need parentheses, and an easy computation gives:
\[
(s\otimes 1+1\otimes s)(r\otimes 1+1\otimes r)(s\otimes 1+1\otimes s)
=-2\nup(r,s)(s\otimes 1+1\otimes s),
\]
where $\nup$ denotes the norm of the Cayley algebra $\cC$. If $L_{s\otimes 1+1\otimes s}$ were
invertible, then we would have 
\[
0=(r\otimes 1+1\otimes r)(s\otimes 1+1\otimes s)=rs\otimes 1+r\otimes s+s\otimes r+1\otimes rs
\]
for any $r\in\cS=\cS(\cC,-)$ such that $\nup(r,s)=0$. Since 
$\cC\otimes\cC=(\cC\otimes 1+1\otimes\cC)\oplus (\cS\otimes\cS)$, this would give 
$r\otimes s+s\otimes r=0$ for any $r\in\cS=\cS(\cC,-)$ such that $\nup(r,s)=0$, which is not the case.
Therefore, there are no suitable skew-symmetric elements in $T(\cC)$.

\smallskip

This leaves us with the tensor products of two unital composition algebras, and this case will be dealt
 with in the next section.

\bigskip

\section{A magic square of Lie superalgebras}\label{se:magic}

Unless otherwise stated, \emph{the ground field $\FF$ will be assumed to be algebraically closed
of characteristic $3$ throughout this section}.

Let $\cC_1$ be the Cayley algebra over $\FF$, that is, the (unique) eight-dimensional unital composition
algebra, and let $\cC_2$ be another unital composition algebra. Consider the structurable algebra 
$\cA=\cC_1\otimes \cC_2$, where the involution is the tensor product of the
canonical involutions in  $\cC_1$ and $\cC_2$: $\overline{a\otimes b}=\overline{a}\otimes\overline{b}$,
for any $a\in\cC_1$ and $b\in\cC_2$.

Denote by $\nup_i$ the norm in $\cC_i$, and by $\cS_i$ the subspace of trace zero elements (i.e., 
$\bar s=-s$) in $\cC_i$, for $i=1,2$. The skew-symmetric part of $\cA$ is 
$\cS=\cS_1\otimes 1 + 1\otimes\cS_2$, which we will identify with $\cS_1\oplus\cS_2$. 

As in \cite{Allison_Tensor} consider the \emph{Albert form} $Q:\cS\rightarrow\FF$, which is the 
nondegenerate quadratic form given by 
\begin{equation}\label{eq:AlbertForm}
Q(s_1+s_2)\bydef \nup_1(s_1)-\nup_2(s_2),
\end{equation}
for $s_1\in\cS_1$ and $s_2\in\cS_2$. Consider also the linear map $\sharp$ given by
\[
(s_1+s_2)^\sharp \bydef s_1-s_2.
\]
This is an isometry of $Q$.

The following result is \cite[Proposition 3.3]{Allison_Tensor}. The proof there works in characteristic 
$\neq 2$.

\begin{proposition}\label{pr:AlbertForm}
Let $(\cA,-)$ be the structurable algebra obtained as the tensor product of the Cayley algebra $\cC_1$ 
and a unital composition algebra $\cC_2$, and consider its Albert form $Q:\cS\rightarrow\FF$ in
\eqref{eq:AlbertForm}. Then the following conditions hold for any $a,b,c\in\cS$, where 
$L_x\colon \cA\rightarrow\cA$ denotes, as usual, the left multiplication by an element $x\in\cA$, and 
$Q(a,b)=Q(a+b)-Q(a)-Q(b)$ is the polar form of $Q$:
\begin{gather}
L_aL_{a^\sharp}=L_{a^\sharp}L_a=-Q(a)\id,\label{eq:AllQ1}\\
ab^\sharp a=Q(a)b-Q(a,b)a,\label{eq:AllQ2}\\
L_aL_{b^\sharp}L_a=Q(a)L_b-Q(a,b)L_a,\label{eq:AllQ3}\\
L_aL_{b^\sharp}L_aL_{c^\sharp}=Q(a)L_bL_{c^\sharp}-Q(a,b)L_aL_{c^\sharp},\label{eq:AllQ4}\\
(ab^{\sharp})^2+Q(a,b)ab^{\sharp}+Q(a)Q(b)1=0.\label{eq:AllQ5}
\end{gather}
\end{proposition}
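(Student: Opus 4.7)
The plan is to treat (i)--(iv) as operator-algebraic consequences bootstrapped from the single direct calculation (i), and to reserve a tensor-product expansion for (v). First I would verify (i) by writing $a=s_1+s_2$ with $s_i\in\cS_i$ (identifying $s_1$ with $s_1\otimes 1$ and $s_2$ with $1\otimes s_2$), so that $s_1$ and $s_2$ commute inside $\cA$ and $s_i^2=-\nup_i(s_i)\cdot 1$ by alternativity in $\cC_i$. Then for $y\otimes z\in\cA$,
\[
a\bigl(a^\sharp(y\otimes z)\bigr)=s_1^2 y\otimes z-y\otimes s_2^2 z=\bigl(-\nup_1(s_1)+\nup_2(s_2)\bigr)(y\otimes z)=-Q(a)(y\otimes z),
\]
the cross terms $\pm s_1 y\otimes s_2 z$ canceling; the same computation yields $L_{a^\sharp}L_a=-Q(a)\id$. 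Polarizing (i) via $a\mapsto a+c$ produces the auxiliary identity $L_aL_{c^\sharp}+L_cL_{a^\sharp}=-Q(a,c)\id$, which when multiplied on the right by $L_a$ and combined with (i) gives (iii): $L_aL_{b^\sharp}L_a=Q(a)L_b-Q(a,b)L_a$; right-multiplying (iii) by $L_{c^\sharp}$ yields (iv).

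Identity (ii) follows by applying (iii) to the unit $1\in\cA$, which gives $a(b^\sharp a)=Q(a)b-Q(a,b)a$, and observing that the associator $(a,b^\sharp,a)$ vanishes. Indeed, skew-alternativity with $a\in\cS$ in the last slot gives $(a,b^\sharp,a)=-(a,a,b^\sharp)$, and with $a\in\cS$ in the first slot gives $(a,a,b^\sharp)=-(a,a,b^\sharp)$, whence $(a,a,b^\sharp)=0$ in characteristic $\neq 2$. Consequently $a(b^\sharp a)=(ab^\sharp)a$, which is (ii).

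For (v), applying (iv) with $c=b$ to $1$ already yields
\[
a\bigl(b^\sharp(ab^\sharp)\bigr)+Q(a,b)\,ab^\sharp+Q(a)Q(b)=0,
\]
so (v) reduces to the vanishing of the associator $(a,b^\sharp,ab^\sharp)$. Unlike $(a,b^\sharp,a)$, this one is not killed by skew-alternativity: the available swaps merely permute it (up to sign) among a small orbit of associators, producing only tautologies. I would therefore close the argument by direct expansion in $\cC_1\otimes\cC_2$: writing $a=u_1+u_2$, $b=v_1+v_2$, $p=u_1v_1\in\cC_1$, $q=u_2v_2\in\cC_2$, and expanding
\[
ab^\sharp=p\otimes 1-u_1\otimes v_2+v_1\otimes u_2-1\otimes q
\]
against itself, one uses the quadratic Cayley--Hamilton relations $p^2+\nup_1(u_1,v_1)p+\nup_1(u_1)\nup_1(v_1)=0$ (valid in any characteristic, since $\trace(uv)=-\nup_i(u,v)$ and $\nup_i(uv)=\nup_i(u)\nup_i(v)$) and its $\cC_2$-analogue, together with $uvu=\nup_i(u)v-\nup_i(u,v)u$ and $uv+vu=-\nup_i(u,v)$ for $u,v\in\cS_i$. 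The $p\otimes q$ contributions cancel, and the remainder reassembles into $-Q(a,b)\,ab^\sharp-Q(a)Q(b)$. The main obstacle is precisely this last step: (v) genuinely requires the quadratic Cayley--Hamilton identity in each composition-algebra factor and is the one identity that cannot be obtained purely from (i) and skew-alternativity.
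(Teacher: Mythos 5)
Your proof is correct, but it necessarily takes a different route from the paper's, because the paper gives no proof at all: it simply quotes \cite[Proposition~3.3]{Allison_Tensor} and notes that the argument there is valid in characteristic $\neq 2$. What you do instead is a self-contained verification: \eqref{eq:AllQ1} by evaluation on simple tensors (the cross terms cancel and $s_i^2=-\nup_i(s_i)1$), its polarization $L_aL_{c^\sharp}+L_cL_{a^\sharp}=-Q(a,c)\id$ to obtain \eqref{eq:AllQ3} and \eqref{eq:AllQ4} by pure operator manipulation, skew-alternativity to kill the associator $(a,b^\sharp,a)$ and descend to the element identity \eqref{eq:AllQ2}, and a direct expansion in $\cC_1\otimes\cC_2$ for \eqref{eq:AllQ5}. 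I have checked the last, only sketched, step: with $A=p\otimes1$, $B=u_1\otimes v_2$, $C=v_1\otimes u_2$, $D=1\otimes q$, rewriting $v_1u_1=-\nup_1(u_1,v_1)1-p$ and $v_2u_2=-\nup_2(u_2,v_2)1-q$ in the cross terms makes the $p\otimes q$ contributions from $AD+DA$ and $BC+CB$ cancel as you claim, and the surviving coefficients of $1\otimes1$, $p\otimes1$, $1\otimes q$, $u_1\otimes v_2$, $v_1\otimes u_2$ are exactly those of $-Q(a)Q(b)1-Q(a,b)\,ab^\sharp$; so your argument closes, and compared with the bare citation it has the merit of making the characteristic-independence explicit. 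One discrepancy is not your mistake: the identity you derive, $L_aL_{b^\sharp}L_a=Q(a)L_b-Q(a,b)L_a$, is the correct one---it is what \eqref{eq:AllQ2}, \eqref{eq:AllQ4} and Allison's original statement require---whereas \eqref{eq:AllQ3} as displayed, ending in $-Q(a,b)L_b$, contains a typo.
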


\smallskip

\subsection{The inner structure Lie algebra}

Our first goal is the description of the inner structure Lie algebra of these structurable algebras.

We need the following preliminary result.

\begin{lemma}\label{le:LSLS}
Let $(\cA,-)$ be the structurable algebra obtained as the tensor product of the Cayley algebra $\cC_1$ 
and a unital composition algebra $\cC_2$. Then the subspace 
$L_\cS L_\cS=\espan{L_aL_b\mid a,b\in\cS}$ is an ideal of $\mathfrak{instrl}(\cA,-)$ that splits
as the direct sum of its central ideal $\FF\id$ and the ideal
\[
M_{\cS,\cS}=\espan{M_{a,b}\bydef L_aL_{b^\sharp}-L_bL_{a^\sharp}\mid a,b\in\cS}.
\]
Moreover, the representation $\delta\colon\mathfrak{instrl}(\cA,-)\rightarrow\End_{\FF}(\cS)$ involved
in the definition of $\cK(\cA,-)$ in \eqref{eq:KA-} takes $\id$ to $2\id_{\cS}$ and takes 
$M_{\cS,\cS}$ isomorphically onto the orthogonal Lie algebra $\frso(\cS,Q)$.
\end{lemma}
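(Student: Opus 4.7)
The plan is to verify, in order, the inclusion $L_\cS L_\cS \subseteq \mathfrak{instrl}(\cA,-)$, the decomposition $L_\cS L_\cS = \FF\id + M_{\cS,\cS}$, the explicit formula for $\delta$, and finally the ideal property. For the first step, the identity $L_sL_t = \frac{1}{2}(V_{st,1} - V_{s,t})$ derived in the proof of Proposition \ref{pr:KantorLieAlgebra} places $L_\cS L_\cS$ inside $\mathfrak{instrl}(\cA,-)$. For the second, polarizing \eqref{eq:AllQ1} gives $L_aL_{b^\sharp} + L_bL_{a^\sharp} = -Q(a,b)\id$, so that $2L_aL_{b^\sharp} = M_{a,b} - Q(a,b)\id$; since $\sharp$ is a linear bijection on $\cS$, every element of $L_\cS L_\cS$ decomposes as a scalar multiple of $\id$ plus an element of $M_{\cS,\cS}$.

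The computational heart of the argument is the formula for $\delta$. Directly from $T^\delta(x)=T(x)+x\overline{T(1)}$ one gets $\delta(\id)=2\id_\cS$. For $M_{a,b}$ and $c\in\cS$, since $a$, $b^\sharp$, $c$ all lie in $\cS$, skew-alternativity renders the associator $(a,b^\sharp,c)$ alternating in all three slots, so $a(b^\sharp c)+c(b^\sharp a)=(ab^\sharp)c+(cb^\sharp)a$. Polarizing \eqref{eq:AllQ2} in its first argument yields
\[
(ab^\sharp)c+(cb^\sharp)a = Q(a,c)b - Q(a,b)c - Q(b,c)a,
\]
and subtracting the $a\leftrightarrow b$ analogue (with $2=-1$ in characteristic $3$) gives
\[
\delta(M_{a,b})(c) = Q(b,c)a - Q(a,c)b,
\]
which is the standard rank-two skew operator $-\sigma_{a,b}\in\frso(\cS,Q)$. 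The operators $\sigma_{a,b}$ span $\frso(\cS,Q)$ when $Q$ is nondegenerate, while $M_{\cS,\cS}$ is a quotient of $\wedge^2\cS$ and hence has dimension at most $\binom{\dim\cS}{2}=\dim\frso(\cS,Q)$; thus $\delta|_{M_{\cS,\cS}}$ is a surjection between spaces of equal dimension, hence a Lie algebra isomorphism onto $\frso(\cS,Q)$. Directness of the sum $\FF\id\oplus M_{\cS,\cS}$ then follows: a relation $\alpha\id=X\in M_{\cS,\cS}$ would yield $2\alpha\id_\cS\in\frso(\cS,Q)$ under $\delta$, forcing $\alpha=0$ since the only scalar operator in $\frso(\cS,Q)$ is zero.

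It remains to establish the ideal property, which is where the main obstacle lies. My plan is to use the Freudenthal--Kantor identity \eqref{eq:KV}, which via $K_{a,b}=L_{a\overline{b}-b\overline{a}}$ (Remark \ref{re:KAA}) reads $L_sV_{u,v}+V_{v,u}L_s = L_{(su)\overline{v}-v\overline{su}}\in L_\cS$ for any $s\in\cS$. Combined with $V_{u,v}+V_{v,u}=L_{u\overline{v}+v\overline{u}}$, this rearranges $[V_{u,v},L_s]$ as a sum of a term in $L_\cS$ and a term of the shape $L_h L_s$ with $h=u\overline{v}+v\overline{u}\in\cH$ symmetric. Pushing $V_{u,v}$ past $L_sL_t$ and absorbing the stray symmetric-left-multiplication terms back into $L_\cS L_\cS$ via the identities \eqref{eq:AllQ3}--\eqref{eq:AllQ5} then shows $[\mathfrak{instrl}(\cA,-),L_\cS L_\cS]\subseteq L_\cS L_\cS$. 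The centrality of $\FF\id$ is immediate, and that $M_{\cS,\cS}$ is itself an ideal follows because $\delta$ is a Lie algebra homomorphism (arising from the adjoint action of $\cK(\cA,-)_0=\mathfrak{instrl}(\cA,-)$ on $\cK(\cA,-)_2$) and $\frso(\cS,Q)$ is an ideal of $\delta(L_\cS L_\cS)=\FF\id_\cS\oplus\frso(\cS,Q)$. The delicate absorption of the $L_h$ terms into $L_\cS L_\cS$ is the one place where the full strength of Proposition \ref{pr:AlbertForm} and the skew-alternativity of $(\cA,-)$ must be brought to bear.
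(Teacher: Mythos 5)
Your handling of the splitting and of the $\delta$-formula is correct, and it is in fact more self-contained than the paper's: the paper simply quotes \cite[Proposition 4.3]{Allison_Tensor} (valid in characteristic $\neq 2$) for $L_\cS L_\cS=\FF\id\oplus M_{\cS,\cS}$ and for $M_{a,b}^\delta(c)=2\bigl(Q(a,c)b-Q(b,c)a\bigr)$ -- your expression $Q(b,c)a-Q(a,c)b$ is the same operator rewritten with $2=-1$ -- and then concludes with exactly the dimension count you use ($\dim M_{\cS,\cS}\leq\binom{\dim\cS}{2}=\dim\frso(\cS,Q)$ and surjectivity of $\delta$ onto the span of the $\sigma^Q_{a,b}$). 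Your reconstruction of that computation from the polarizations of \eqref{eq:AllQ1} and \eqref{eq:AllQ2} together with skew-alternativity is sound, as is the deduction of directness and of $\delta(\id)=2\id_\cS$.

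The genuine gap is precisely the step you leave as a plan, the ideal property. As described, it would not go through: after writing $[V_{u,v},L_s]=L_hL_s-L_{s'}$ with $h=u\overline v+v\overline u$ symmetric, you propose to absorb the resulting terms involving $L_h$ by means of \eqref{eq:AllQ3}--\eqref{eq:AllQ5}; but those identities only concern skew elements and the map $\sharp$, and give no control whatsoever over left multiplications by symmetric elements, so the ``absorption'' has nothing to rest on. The repair is to avoid symmetrizing and instead use \eqref{eq:KV} twice, keeping the twist $u\leftrightarrow v$: from $K_{a,b}=L_{a\overline b-b\overline a}$ one gets $V_{u,v}L_s=-L_sV_{v,u}+L_{(sv)\overline u-u\overline{sv}}$ for $s\in\cS$, and applying this to $L_s$ and then to $L_t$ yields
\[
[V_{u,v},L_sL_t]=L_{(sv)\overline u-u\overline{sv}}L_t-L_sL_{(tu)\overline v-v\overline{tu}}\in L_\cS L_\cS,
\]
which settles the claim since the operators $V_{u,v}$ span $\frinstrl(\cA,-)$. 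The paper's own argument is shorter still and needs no computation: by \eqref{eq:KA-}, $L_\cS L_\cS=[\cK(\cA,-)_2,\cK(\cA,-)_{-2}]$ while $\frinstrl(\cA,-)=\cK(\cA,-)_0$, so the $\ZZ$-grading of $\cK(\cA,-)$ and the Jacobi identity give $[\cK(\cA,-)_0,[\cK(\cA,-)_2,\cK(\cA,-)_{-2}]]\subseteq[\cK(\cA,-)_2,\cK(\cA,-)_{-2}]$ in one line. Finally, note that your $\delta$-based argument shows $M_{\cS,\cS}$ is an ideal of $L_\cS L_\cS$ (the preimage of the ideal $\frso(\cS,Q)$ under the injective restriction of $\delta$ to $L_\cS L_\cS$), which is what the lemma asserts; with either of the above fixes for $L_\cS L_\cS$ itself, the proof is complete.
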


\begin{proof}
The fact that $L_{\cS}L_{\cS}$ is an ideal of $\mathfrak{instrl}(\cA,-)$ follows from 
 $\mathfrak{instrl}(\cA,-)$ being equal to $\cK(\cA,-)_0$, and $L_{\cS}L_{\cS}$ to
$[\cK(\cA,-)_2,\cK(\cA,-)_{-2}]$ in Proposition \ref{pr:KantorLieAlgebra}. The arguments in \cite[Proposition 4.3]{Allison_Tensor}, which are
valid in characteristic $\neq 2$, give $L_{\cS}L_{\cS}=\FF\id\oplus M_{\cS,\cS}$ and
\begin{equation}\label{eq:Mab_sigma}
M_{a,b}^\delta(c)=2\bigl(Q(a,c)b-Q(b,c)a\bigr)\bydef 2\sigma_{a,b}^Q(c)
\end{equation}
for any $a,b,c\in\cS$. By skew-symmetry, the dimension of $M_{\cS,\cS}$ is at most 
$\binom{\dim\cS}{2}=\dim\frso(\cS,Q)$, and the orthogonal Lie algebra $\frso(\cS,Q)$ is spanned
by the operators $\sigma_{a,b}^Q$, so we obtain that $\delta$ takes $M_{\cS,\cS}$ isomorphically
to $\frso(\cS,Q)$. The result follows.
\end{proof}

\begin{theorem}\label{th:instrl(C1C2)}
Let $(\cA,-)$ be the structurable algebra obtained as the tensor product of the Cayley algebra $\cC_1$ 
and a unital composition algebra $\cC_2$.
\begin{itemize}
\item
If the dimension of $\cC_2$ is $1$, $2$ or $8$, then we have the equality 
$\mathfrak{instrl}(\cA,-)=L_\cS L_\cS$.

\item
In the remaining case: $\dim\cC_2=4$, the Lie algebra $\mathfrak{instrl}(\cA,-)$ is the direct sum
of its ideals $R_{\cS_2}=\espan{R_a\mid a\in\cS_2}$ and $L_\cS L_\cS$. (As usual, $R_a$ denotes
the right multiplication by $a$, and $\cS_2$ is identified with $1\otimes\cS_2$.)
\end{itemize}
\end{theorem}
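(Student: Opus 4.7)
The plan is to analyze $\mathfrak{instrl}(\cA,-)$, spanned by the operators $V_{a,b}$, modulo the ideal $L_\cS L_\cS$. First I would observe that for any $a,b\in\cA$, the element $a\bar b+b\bar a$ is symmetric, hence lies in $H(\cA,-)=\FF 1\oplus(\cS_1\otimes\cS_2)$; since $L_{s_1\otimes s_2}=L_{s_1\otimes 1}\,L_{1\otimes s_2}$ as operators on $\cA$ for $s_i\in\cS_i$, we have $L_{H(\cA,-)}\subseteq L_\cS L_\cS$. Combined with the identity $V_{a,b}+V_{b,a}=L_{a\bar b+b\bar a}$ established in Section~\ref{se:structurable}, this shows $V_{a,b}\equiv-V_{b,a}\pmod{L_\cS L_\cS}$ for all $a,b\in\cA$, so $V_{a,b}$ is effectively skew-symmetric modulo $L_\cS L_\cS$.

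Second, for $u,v\in\cS_2$ with $\cC_2$ associative (i.e., $\dim\cC_2\in\{1,2,4\}$), a direct computation gives
\[
V_{1\otimes u,1\otimes v}=-L_{1\otimes uv}+R_{1\otimes[u,v]}.
\]
Expanding $uv=-\tfrac12\nup_2(u,v)\cdot 1+\tfrac12[u,v]$ and using $\FF\id\subseteq L_\cS L_\cS$, this reduces to $V_{1\otimes u,1\otimes v}\equiv -\tfrac12 L_{1\otimes[u,v]}+R_{1\otimes[u,v]}\pmod{L_\cS L_\cS}$. Combined with $V_{1\otimes[u,v],1}=L_{1\otimes[u,v]}-R_{1\otimes[u,v]}$ (from $V_{a,1}=L_a+2R_a=L_a-R_a$ in characteristic $3$), one extracts
\[
R_{1\otimes[u,v]}\equiv 2\,V_{1\otimes u,1\otimes v}+V_{1\otimes[u,v],1}\pmod{L_\cS L_\cS},
\]
giving $R_{1\otimes[\cS_2,\cS_2]}\subseteq\mathfrak{instrl}(\cA,-)$. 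For quaternion $\cC_2$ we have $[\cS_2,\cS_2]=\cS_2$, yielding $R_{\cS_2}\subseteq\mathfrak{instrl}(\cA,-)$; for commutative $\cC_2$ ($\dim\cC_2\in\{1,2\}$), $[\cS_2,\cS_2]=0$ and no new contribution arises. Directness of the sum $L_\cS L_\cS+R_{\cS_2}$ follows from the representation $\delta$ of Lemma~\ref{le:LSLS}: for $s_2\in\cS_2$, $R_{1\otimes s_2}(1)=1\otimes s_2\in\cS_2$ is skew-symmetric, so $\delta(R_{1\otimes s_2})=R_{1\otimes s_2}|_\cS-R_{1\otimes s_2}|_\cS=0$; thus $R_{\cS_2}\subseteq\ker\delta$, which meets $L_\cS L_\cS$ trivially because $\delta$ is injective on $L_\cS L_\cS$.

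The main obstacle is the reverse inclusion $\mathfrak{instrl}(\cA,-)\subseteq L_\cS L_\cS+R_{\cS_2}$, which reduces by multilinearity to a case analysis of $V_{a,b}$ across the sixteen possibilities for $(a,b)$ in the decomposition $\cA=\FF 1\oplus(\cS_1\otimes 1)\oplus(1\otimes\cS_2)\oplus(\cS_1\otimes\cS_2)$. Each case is handled using the skew-symmetry $V_{a,b}\equiv -V_{b,a}$, the Kantor identity \eqref{eq:str1}, and the norm relations $L_{s_1\otimes 1}^2=-\nup_1(s_1)\id$, $L_{1\otimes s_2}^2=-\nup_2(s_2)\id$; each $V_{a,b}$ is expressed modulo $L_\cS L_\cS$ as a right multiplication by an element of $\cS$. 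The $R_{\cS_1}$-components are absorbed into $L_\cS L_\cS$ by Cayley alternativity of $\cC_1$ (this is essentially the content of the case $\cC_2=\FF$), and when $\dim\cC_2=8$ the $R_{\cS_2}$-components are absorbed by the symmetric Cayley-alternativity argument applied to $\cC_2$ (the would-be associator correction in the analogue of the computation above lies in $L_{\cS_2}L_{\cS_2}\subseteq L_\cS L_\cS$), leaving only the genuine $R_{\cS_2}$-contribution in the quaternion case. A dimension check against $\dim\cK(\cA,-)$, computable from the $\ZZ$-grading of Proposition~\ref{pr:KantorLieAlgebra}, provides the final confirmation.
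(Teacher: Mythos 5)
Your lower bound in the quaternion case is essentially correct and close in spirit to the paper's: the identity $V_{a,b}+V_{b,a}=L_{a\overline b+b\overline a}$ together with $L_{s_1\otimes s_2}=L_{s_1\otimes 1}L_{1\otimes s_2}$ and $\FF\id\subseteq L_\cS L_\cS$ does give $V_{a,b}\equiv -V_{b,a}$ modulo $L_\cS L_\cS$; your extraction of $R_{1\otimes[u,v]}$ from $V_{1\otimes u,1\otimes v}=-L_{1\otimes uv}+R_{1\otimes[u,v]}$ (valid for associative $\cC_2$) and $V_{w,1}=L_w+2R_w$ is correct, $[\cS_2,\cS_2]=\cS_2$ for the quaternions, and the directness argument via $R_a^\delta=0$ versus injectivity of $\delta$ on $L_\cS L_\cS$ is exactly the one used in the paper.

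However, the main content of the theorem is the reverse inclusion $\frinstrl(\cA,-)\subseteq L_\cS L_\cS$ (respectively $\subseteq R_{\cS_2}\oplus L_\cS L_\cS$), and there your proposal is a plan rather than a proof. The ``sixteen-case analysis'' is only asserted; the claims that each $V_{a,b}$ reduces modulo $L_\cS L_\cS$ to a right multiplication and that the $R_{\cS_1}$- (and, for $\dim\cC_2=8$, the $R_{\cS_2}$-) contributions are ``absorbed'' are precisely the nontrivial identities to be established. For instance, already for $\cC_2=\FF$ the case $V_{u,v}$ with $u,v\in\cS_1$ gives, using alternativity and $(y,u,v)=(L_{uv}-L_uL_v)(y)$, that $V_{u,v}\equiv \tfrac12 L_{[u,v]}+R_{[u,v]}=\tfrac12 V_{[u,v],1}\pmod{L_\cS L_\cS}$, so the whole question collapses to whether $V_{w,1}=L_w+2R_w$ lies in $L_\cS L_\cS=\FF\id+[L_{\cS_1},L_{\cS_1}]$ for a Cayley algebra in characteristic $3$ --- a genuinely nontrivial fact that your sketch does not prove (and the same issue recurs, aggravated, when both factors are Cayley and in the mixed cases involving $\cS_1\otimes\cS_2$). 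Moreover, your proposed ``final confirmation'' by a dimension check is circular: the $\ZZ$-grading of Proposition \ref{pr:KantorLieAlgebra} only gives $\dim\cK(\cA,-)=\dim\frinstrl(\cA,-)+2\dim\cA+2\dim\cS$, which involves the very unknown you are trying to bound. This is where the paper's proof does its real work: it identifies $\cK(\cA,-)$, through the Steinberg unitary algebra $\frstu_3(\cA,-,\gamma)$ (Propositions \ref{pr:stu3_KA-} and \ref{pr:stu3_gCC}, Corollary \ref{co:KA-_gCC}), with $\frg_{\alpha}(\overline{\cC}_1^{\op},\overline{\cC}_2^{\op})^{(1)}$, whose dimensions $52,77,133,248$ are known, deduces $\dim\frinstrl(\cA,-)=22,29,49,92$, and compares with $\dim L_\cS L_\cS=1+\binom{\dim\cS}{2}$ from Lemma \ref{le:LSLS}. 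Without either carrying out your case analysis in full (including the characteristic-$3$ octonion identities it needs) or supplying an independent computation of $\dim\cK(\cA,-)$ or $\dim\frinstrl(\cA,-)$, the upper bound --- and hence the theorem --- remains unproved in your write-up.
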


\begin{remark}
Over fields of characteristic $\neq 2,3$ the above result fails for $\dim\cC_2=2$, where 
$\mathfrak{instrl}(\cA,-)=R_{\cS_2}\oplus L_\cS L_\cS$ (\cite[Theorem 4.4]{Allison_Tensor}).
\end{remark}

\smallskip

The proof of Theorem \ref{th:instrl(C1C2)} will be given in several steps and in a quite indirect way.
In the process, some results that have their own interest will be proved.

\smallskip

Let us recall first the definition in \cite{AllisonFaulkner_Steinberg} of the Steinberg unitary algebra 
$\frstu_3(\cA,-,\gamma)$
attached to a structurable algebra $(\cA,-)$ and a triple 
$(\gamma_1,\gamma_2,\gamma_3)\in(\FF^\times)^3$. This is the Lie algebra with generators
$u_{ij}[x]$, for $1\leq i\neq j\leq 3$ and $x\in\cA$, subject to the following relations for any $x,y\in\cA$
and indices $i\neq j$:
\begin{itemize}
\item $u_{ij}[x]$ is linear in $x$,
\item $u_{ij}[x]=-\gamma_i\gamma_j^{-1}u_{ji}[\overline{x}]$,
\item $[u_{12}[x],u_{23}[y]]=u_{13}[xy]$, and cyclically on $1,2,3$.
\end{itemize}
Let us write explicitly the last condition, taking into account the previous one:
\begin{equation}\label{eq:u12u23}
\begin{split}
[u_{12}[x],u_{23}[y]]&=-\gamma_1\gamma_3^{-1}u_{31}[\overline{xy}],\\
[u_{23}[x],u_{31}[y]]&=-\gamma_2\gamma_1^{-1}u_{12}[\overline{xy}],\\
[u_{31}[x],u_{12}[y]]&=-\gamma_3\gamma_2^{-1}u_{23}[\overline{xy}].
\end{split}
\end{equation}

It follows that $\frstu_3(\cA,-,\gamma)$ is $(\ZZ/2\ZZ)^2$-graded:
\[
\frstu_3(\cA,-,\gamma)=\frg\oplus u_{12}[\cA]\oplus u_{23}[\cA]\oplus u_{31}[\cA],
\]
with $\frstu_3(\cA,-,\gamma)_{(\bar 0,\bar 0)}=
  \frg=[u_{12}[\cA],u_{21}[\cA]]+[u_{23}[\cA],u_{32}[\cA]]+[u_{31}[\cA],u_{13}[\cA]]$,
$\frstu_3(\cA,-,\gamma)_{(\bar 1,\bar 0)}=u_{12}[\cA]\,(=u_{21}[\cA])$, 
$\frstu_3(\cA,-,\gamma)_{(\bar 0,\bar 1)}=u_{23}[\cA]\,(=u_{32}[\cA])$, and
$\frstu_3(\cA,-,\gamma)_{(\bar 1,\bar 1)}=u_{31}[\cA]\,(=u_{13}[\cA])$.

\smallskip

On the other hand, the Lie algebra $\cK(\cA,-)$ in Proposition \ref{pr:KantorLieAlgebra} is endowed
with two natural order $2$ commuting automorphisms (see \cite{EldOku_S4}):
\[
\begin{split}
\epsilon\colon (x,a)^\sim+T+(y,b)&\mapsto (y,b)^\sim+T^\varepsilon+(x,a),\\
\tau\colon  (x,a)^\sim+T+(y,b)&\mapsto (-x,a)^\sim+T+(-y,b).
\end{split}
\]
These two automorphisms induce a grading by $(\ZZ/2\ZZ)^2$:
\[
\begin{split}
\cK(\cA,-)_{(\bar 0,\bar 0)}&=\{T\in \frinstrl(\cA,-)\mid T^\varepsilon=T\}\oplus
       \{(0,a)^\sim +(0,a)\mid a\in\cS\},\\
\cK(\cA,-)_{(\bar 1,\bar 0)}&=\{T\in \frinstrl(\cA,-)\mid T^\varepsilon=-T\}\oplus
		\{(0,a)^\sim -(0,a)\mid a\in\cS\},\\
\cK(\cA,-)_{(\bar 0,\bar 1)}&=\{(x,0)^\sim+(x,0)\mid x\in\cA\},\\
\cK(\cA,-)_{(\bar 1,\bar 1)}&=\{(x,0)^\sim-(x,0)\mid x\in\cA\}.
\end{split}
\]
The subspace $\{T\in \frinstrl(\cA,-)\mid T^\varepsilon=-T\}$ equals 
$\espan{V_{x,y}-V_{x,y}^\varepsilon\mid x,y\in\cA}=\espan{V_{x,y}+V_{y,x}\mid x,y\in\cA}$.
But $V_{x,y}+V_{y,x}=L_{x\overline{y}+y\overline{x}}$ for any $x,y\in\cA$, which gives:
\[
\{T\in \frinstrl(\cA,-)\mid T^\varepsilon=-T\}=L_\cH,
\]
where $\cH=\cH(\cA,-)=\{x\in\cA\mid \overline{x}=x\}$ is the subspace of symmetric elements in $\cA$.
Therefore, we have $\cK(\cA,-)_{(\bar 1,\bar 0)}=L_{\cH}\oplus\{(0,a)^\sim -(0,a)\mid a\in\cS\}$.

As in \cite[\S 4]{EldOku_S4} write, for $x\in\cA$,
\[
\left\{\begin{aligned}
 \epsilon_1(x)&=(x,0)+(x,0)^\sim\in\cK(\cA,-)_{(\bar 0,\bar 1)},\\
 \epsilon_2(x)&=(\overline{x},0)-(\overline{x},0)^\sim \in\cK(\cA,-)_{(\bar 1,\bar 1)}\\
 \epsilon_0(x)&=\frac{1}{2}\Bigl(L_{x+\overline{x}}
   +\bigl((0,x-\overline{x})-(0,x-\overline{x})^\sim\bigr)\Bigr)\in\cK(\cA,-)_{(\bar 1,\bar 0)}.
\end{aligned}\right.
\]
The bracket in \eqref{eq:KA-} of these elements in $\cK(\cA,-)$ behaves as follows, 
for any $x,y\in \cA$ (see
\cite[(4.2)]{EldOku_S4}, noting that all the computations there work also in characteristic $3$):
\[
\begin{split}
[\epsilon_0(x),\epsilon_1(y)]&=\epsilon_2(\overline{xy}),\\
[\epsilon_1(x),\epsilon_2(y)]&=-2\epsilon_0(\overline{xy}),\\
[\epsilon_2(x),\epsilon_0(y)]&=-\epsilon_1(\overline{xy}).
\end{split}
\]
(We refrain to use $-2=1$ here, because these formulas are valid in any characteristic $\neq 2$.)

With $\tilde\epsilon_0(x)=\epsilon_0(x)$, $\tilde\epsilon_1(x)=-\frac{1}{2}\epsilon_1(x)$, and
$\tilde\epsilon_2(x)=\epsilon_2(x)$, these last equations become
\begin{equation}\label{eq:tilde_epsilones}
\begin{split}
[\tilde\epsilon_0(x),\tilde\epsilon_1(y)]&=-\frac{1}{2}\tilde\epsilon_2(\overline{xy}),\\
[\tilde\epsilon_1(x),\tilde\epsilon_2(y)]&=\tilde\epsilon_0(\overline{xy}),\\
[\tilde\epsilon_2(x),\tilde\epsilon_0(y)]&=2\tilde\epsilon_1(\overline{xy}).
\end{split}
\end{equation}
Note that with $\gamma=(1,-1,2)$, $-\gamma_1\gamma_3^{-1}=-\frac{1}{2}$, 
$-\gamma_2\gamma_1^{-1}=1$, and $-\gamma_3\gamma_2^{-1}=2$, and hence there is a unique
Lie algebra homomorphism  as follows:
\[
\begin{split}
\Phi:\frstu_3(\cA,-,\gamma)&\longrightarrow \cK(\cA,-)\\
u_{12}[x]\ &\mapsto\ \tilde\epsilon_0(x)\\
u_{23}[x]\ &\mapsto\ \tilde\epsilon_1(x)\\
u_{31}[x]\ &\mapsto\ \tilde\epsilon_2(x)
\end{split}
\]
for any $x\in\cA$. Note that $\Phi$ is surjective because 
$\tilde\epsilon_1(\cA)+\tilde\epsilon_2(\cA)=\cK(\cA,-)_1\oplus\cK(\cA,-)_{-1}$, and this subspace
generates $\cK(\cA,-)$. Moreover, $\Phi$ is homogeneous of trivial degree for the 
$(\ZZ/2\ZZ)^2$-gradings on $\frstu_3(\cA,-,\gamma)$ and $\cK(\cA,-)$, being bijective on
$u_{12}[\cA]$, $u_{23}[\cA]$, and $u_{31}[\cA]$. As a consequence, the ideal $\ker\Phi$ is contained in
$\frstu_3(\cA,-,\gamma)_{(\bar 0,\bar 0)}=\frg$ and, therefore, 
$[\ker\Phi,u_{12}[\cA]]$ is contained in  both 
$[\frg,u_{12}[\cA]]\subseteq u_{12}[\cA]$ and in $\ker\Phi$, so it is trivial. It follows that
$\ker\Phi$ annihilates $u_{12}[\cA]$ and similarly $u_{23}[\cA]$ and $u_{31}[\cA]$, which generate
$\frstu_3(\cA,-,\gamma)$. Thus $\ker\Phi$ is contained in the center 
$Z\bigl(\frstu_3(\cA,-,\gamma)\bigr)$. On the other hand, the image under $\Phi$ of this center
is contained in the center of $\cK(\cA,-)$, which is trivial.

We summarize our findings in the next result.

\begin{proposition}\label{pr:stu3_KA-}
Let $(\cA,-)$ be a structurable algebra, then its Kantor algebra $\cK(\cA,-)$ is isomorphic, as a 
$(\ZZ/2\ZZ)^2$-graded Lie algebra, to
the quotient of the Lie algebra
$\frstu_3(\cA,-,\gamma)$ by its
center, with $\gamma=(1,-1,2)$.
\end{proposition}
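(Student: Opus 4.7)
The plan is to exhibit an explicit surjective Lie algebra homomorphism $\Phi\colon\frstu_3(\cA,-,\gamma)\to\cK(\cA,-)$ and then show that its kernel is exactly the center of $\frstu_3(\cA,-,\gamma)$. The elements $\tilde\epsilon_0(x)$, $\tilde\epsilon_1(x)$, $\tilde\epsilon_2(x)\in\cK(\cA,-)$ introduced in \eqref{eq:tilde_epsilones} already satisfy brackets formally identical to the Steinberg relations \eqref{eq:u12u23} for the specific parameter choice $\gamma=(1,-1,2)$: one checks that $-\gamma_1\gamma_3^{-1}=-\tfrac12$, $-\gamma_2\gamma_1^{-1}=1$, $-\gamma_3\gamma_2^{-1}=2$ match the scalar factors in \eqref{eq:tilde_epsilones}. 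So the universal property of $\frstu_3(\cA,-,\gamma)$ gives a unique homomorphism $\Phi$ sending $u_{12}[x]\mapsto\tilde\epsilon_0(x)$, $u_{23}[x]\mapsto\tilde\epsilon_1(x)$, $u_{31}[x]\mapsto\tilde\epsilon_2(x)$.

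Next I would verify surjectivity: by construction, $\tilde\epsilon_1(\cA)+\tilde\epsilon_2(\cA)=\cK(\cA,-)_{1}\oplus\cK(\cA,-)_{-1}$, and this subspace generates $\cK(\cA,-)$ as a Lie algebra, since the rest of the $\ZZ$-grading is recovered through brackets of these. Simultaneously, the map $\Phi$ respects the $(\ZZ/2\ZZ)^2$-grading on both sides (with the grading on $\frstu_3$ read off from its generators and the grading on $\cK(\cA,-)$ induced by the commuting involutions $\epsilon,\tau$), and $\Phi$ is even bijective on the three non-identity components, since on each of them it is just a linear isomorphism with $\cA$.

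Now the kernel of $\Phi$ sits inside the identity component $\frg=\frstu_3(\cA,-,\gamma)_{(\bar 0,\bar 0)}$. For each $i\neq j$, the subspace $[\ker\Phi,u_{ij}[\cA]]$ lies both in $u_{ij}[\cA]$ (because $\frg$ preserves the grading) and in $\ker\Phi$ (because $\ker\Phi$ is an ideal), and these two intersect trivially as $\Phi$ is injective on $u_{ij}[\cA]$. Hence $\ker\Phi$ annihilates each of $u_{12}[\cA]$, $u_{23}[\cA]$, $u_{31}[\cA]$, and these generate the whole Steinberg algebra, so $\ker\Phi\subseteq Z\bigl(\frstu_3(\cA,-,\gamma)\bigr)$. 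The reverse inclusion follows because the image of the center under $\Phi$ is a central subspace of $\cK(\cA,-)$, and $\cK(\cA,-)$ has trivial center — this last fact is the step I expect to require the most care, since it must be checked directly from the bracket formulas \eqref{eq:KA-} using that an element centralizing both $\cK(\cA,-)_1$ and $\cK(\cA,-)_{-1}$ is forced to be zero by the nondegeneracy of the $V_{x,y}$ pairing together with the identification $\cK(\cA,-)_{\pm 2}\cong\cS$ via $L_s$. Combining these steps identifies $\cK(\cA,-)$ with $\frstu_3(\cA,-,\gamma)/Z\bigl(\frstu_3(\cA,-,\gamma)\bigr)$ as a $(\ZZ/2\ZZ)^2$-graded Lie algebra, as required.
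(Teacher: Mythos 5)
Your argument is exactly the paper's: the same homomorphism $\Phi$ obtained from the universal property of $\frstu_3(\cA,-,\gamma)$ via the relations \eqref{eq:tilde_epsilones} with $\gamma=(1,-1,2)$, the same surjectivity and $(\ZZ/2\ZZ)^2$-grading observations, and the same argument that $\ker\Phi$ lies in the identity component, annihilates the generators $u_{ij}[\cA]$, and hence is central, while the image of the center is central in $\cK(\cA,-)$. The step you flag, triviality of the center of $\cK(\cA,-)$, is likewise simply asserted in the paper; it is an easy componentwise check from \eqref{eq:KA-}, but note that for a degree-one element you need the bracket $[(x,0),(y,0)]=(0,x\overline{y}-y\overline{x})$ in addition to the vanishing of the operators $V_{x,y}$, since in characteristic $3$ the latter alone does not force $x=0$.
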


\smallskip

Assume now that $\cA=\cC_1\otimes\cC_2$ for a Cayley algebra $\cC_1$ and a unital composition
algebra $\cC_2$. Consider the so called \emph{para-Hurwitz} algebras $\overline{\cC}_1^{\op}$ and 
$\overline{\cC}_2^{\op}$ attached to the opposite
algebras $\cC_1^{\op}$ and $\cC_2^{\op}$. That is, the multiplication in $\overline{\cC}_1^{\op}$ is 
given by $x_1\bullet y_1=\overline{y_1}\,\overline{x_1}=\overline{x_1y_1}$ for $x_1,y_1\in\cC_1$,
 and similarly for $\overline{\cC}_2^{\op}$.

These para-Hurwitz algebras are examples of \emph{symmetric composition algebras} and, as shown
in \cite[Remark 3.2]{Elduque_Magic}, for each
$\alpha=(\alpha_0,\alpha_1,\alpha_2)\in(\FF^\times)^3$, there
is a Lie algebra $\frg_{\alpha}(\overline{\cC}_1^{\op},\overline{\cC}_2^{\op})$ defined on the
vector space
\[
\bigl(\tri(\overline{\cC}_1^{\op})\oplus\tri(\overline{\cC}_2^{\op})\bigr)\oplus 
\Bigl(\oplus_{i=0}^2\iota_i(\overline{\cC_1}^{\op}\otimes\overline{\cC}_2^{\op})\Bigr),
\]
where $\iota_i(\cC_1\otimes \cC_2)$ is a copy of 
$\cC_1\otimes\cC_2=\cA$, $\tri(\overline{\cC}_1^{\op})$ is the triality Lie algebra of 
$\overline{\cC}_1^{\op}$, whose 
elements are the triples $(d_0,d_1,d_2)\in\frso(\cC_1,\nup_1)$ such that 
\[
d_0(x_1\bullet y_1)=d_1(x_1)\bullet y_1+x_1\bullet d_2(y_1)
\]
for any $x_1,y_1\in\cC_1$, and similarly for $\tri(\overline{\cC}_2^{\op})$. The bracket in
$\frg_{\alpha}(\overline{\cC}_1^{\op},\overline{\cC}_2^{\op})$ satisfies
\[
[\iota_0(x_1\otimes x_2),\iota_1(y_1\otimes y_2)]
=\alpha_2\iota_2\bigl((x_1\bullet y_1)\otimes(x_2\bullet y_2)\bigr)
=\alpha_2\iota_2\bigl(\overline{x_1y_1}\otimes\overline{x_2y_2}\bigr),
\]
and cyclically on $0,1,2$. Then, with $\alpha=(1,2,-\frac{1}{2})$, this last equation works exactly as
\eqref{eq:tilde_epsilones}, which shows that there is a Lie algebra homomorphism determined as follows:
\[
\begin{split}
\Psi:\frstu_3(\cA,-,\gamma)
              &\longrightarrow \frg_{\alpha}(\overline{\cC}_1^{\op},\overline{\cC}_2^{\op})\\
u_{12}[x\otimes y]\ &\mapsto\ \iota_0(x\otimes y)\\
u_{23}[x\otimes y]\ &\mapsto\ \iota_1(x\otimes y)\\
u_{31}[x\otimes y]\ &\mapsto\ \iota_2(x\otimes y)
\end{split}
\]
for any $x\in\cC_1$ and $y\in\cC_2$. The image of $\Psi$ is the subalgebra of
$\frg_{\alpha}(\overline{\cC}_1^{\op},\overline{\cC}_2^{\op})$ generated by 
$\sum_{i=0}^2\iota_i(\cC_1^{\op}\otimes\cC_2^{\op})$, which is the derived subalgebra
$\frg_{\alpha}(\overline{\cC}_1^{\op},\overline{\cC}_2^{\op})^{(1)}$. 

The same arguments as above give our next result.

\begin{proposition}\label{pr:stu3_gCC}
Let $(\cA,-)$ be the structurable algebra obtained as the tensor product of the Cayley algebra $\cC_1$ 
and a unital composition algebra $\cC_2$. Then the Lie algebra 
$\frg_{\alpha}(\overline{\cC}_1^{\op},\overline{\cC}_2^{\op})^{(1)}$, with 
$\alpha=(1,2,-\frac{1}{2})$, is isomorphic, as a $(\ZZ/2\ZZ)^2$-graded Lie algebra, to the quotient of 
$\frstu_3(\cA,-,\gamma)$ by its
center, with $\gamma=(1,-1,2)$.
\end{proposition}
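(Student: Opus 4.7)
The plan is to mirror exactly the argument of the preceding Proposition~\ref{pr:stu3_KA-}, with $\Phi$ and $\cK(\cA,-)$ replaced by $\Psi$ and $\frg_\alpha(\overline{\cC}_1^{\op},\overline{\cC}_2^{\op})^{(1)}$. The surjectivity of $\Psi$ onto the derived subalgebra has already been recorded in the discussion leading up to the statement. To see that $\Psi$ is well-defined, I would verify the defining relations of $\frstu_3(\cA,-,\gamma)$ on the images $\iota_i(x\otimes y)$: linearity is immediate, the skew-symmetry $u_{ij}[x]=-\gamma_i\gamma_j^{-1}u_{ji}[\bar x]$ reflects the built-in symmetry of the $\iota_i$-slots in $\frg_\alpha$, and the cross-bracket \eqref{eq:u12u23} matches the rule $[\iota_i(z),\iota_{i+1}(z')]=\alpha_{i+2}\iota_{i+2}(\bar z\bullet \bar z')$ in $\frg_\alpha$ under $\gamma=(1,-1,2)$ and $\alpha=(1,2,-\frac{1}{2})$. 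This is precisely the coefficient comparison already carried out when passing from \eqref{eq:u12u23} to \eqref{eq:tilde_epsilones}.

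Endowing $\frg_\alpha$ with the natural $(\ZZ/2\ZZ)^2$-grading whose even component is $\tri(\overline{\cC}_1^{\op})\oplus\tri(\overline{\cC}_2^{\op})$ and whose other three components are $\iota_0(\cA)$, $\iota_1(\cA)$, $\iota_2(\cA)$, the map $\Psi$ is homogeneous of trivial degree and bijective on each $u_{ij}[\cA]$. Therefore $\ker\Psi\subseteq\frstu_3(\cA,-,\gamma)_{(\bar 0,\bar 0)}$, and bracketing $\ker\Psi$ with $u_{ij}[\cA]$ lands in $\ker\Psi\cap u_{ij}[\cA]=0$, so $\ker\Psi$ centralizes the generating subspaces and hence lies in $Z(\frstu_3(\cA,-,\gamma))$. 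For the reverse inclusion, $\Psi(Z(\frstu_3(\cA,-,\gamma)))\subseteq Z(\frg_\alpha^{(1)})$, so the argument closes once this last centre is shown to vanish.

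The main obstacle, and the only place where the proof strictly diverges from that of Proposition~\ref{pr:stu3_KA-} (where the triviality of $Z(\cK(\cA,-))$ was immediate from its $5$-graded structure), is the verification $Z(\frg_\alpha^{(1)})=0$. I would argue homogeneously: for a central element of the form $\iota_i(z)$ with $z\in\cA$, bracketing against $\iota_{i+1}(1\otimes 1)$ yields $\alpha_{i+2}\iota_{i+2}(\bar z)=0$ and forces $z=0$; for a central element $(d,e)\in\tri(\overline{\cC}_1^{\op})\oplus\tri(\overline{\cC}_2^{\op})$, the constraint $d_i(x)\otimes y+x\otimes e_i(y)=0$ for every $x\in\cC_1$ and $y\in\cC_2$ separates tensor factors to give $d_i,e_i\in\FF\id$, and the inclusion of these scalars in $\frso$ then forces them to be zero because $\chr\FF\ne 2$. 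With this, $\ker\Psi=Z(\frstu_3(\cA,-,\gamma))$ and $\Psi$ factors through a $(\ZZ/2\ZZ)^2$-graded Lie algebra isomorphism $\frstu_3(\cA,-,\gamma)/Z(\frstu_3(\cA,-,\gamma))\cong\frg_\alpha(\overline{\cC}_1^{\op},\overline{\cC}_2^{\op})^{(1)}$, as required.
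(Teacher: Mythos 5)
Your proposal is correct and follows essentially the paper's own route: the paper's proof is literally ``the same arguments as above'' (i.e., as for Proposition~\ref{pr:stu3_KA-}), and you reproduce exactly that argument — well-definedness of $\Psi$ via the coefficient comparison, surjectivity onto the derived subalgebra, the grading argument giving $\ker\Psi\subseteq Z(\frstu_3(\cA,-,\gamma))$, and the reverse inclusion — while supplying an explicit (and correct) verification that $Z\bigl(\frg_{\alpha}(\overline{\cC}_1^{\op},\overline{\cC}_2^{\op})^{(1)}\bigr)=0$, a point the paper leaves implicit. Only a cosmetic slip: the bracket rule in $\frg_\alpha$ reads $[\iota_i(z),\iota_{i+1}(z')]=\alpha_{i+2}\,\iota_{i+2}(z\bullet z')=\alpha_{i+2}\,\iota_{i+2}(\overline{zz'})$, with no bars on the arguments of $\bullet$.
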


\begin{corollary}\label{co:KA-_gCC}
Let $(\cA,-)$ be the structurable algebra obtained as the tensor product of the Cayley algebra $\cC_1$ 
and a unital composition algebra $\cC_2$. Then its Kantor algebra $\cK(\cA,-)$ is isomorphic, 
as a $(\ZZ/2\ZZ)^2$-graded Lie algebra, to the
Lie algebra $\frg_{\alpha}(\overline{\cC}_1^{\op},\overline{\cC}_2^{\op})^{(1)}$, with
$\alpha=(1,2,-\frac{1}{2})$.
\end{corollary}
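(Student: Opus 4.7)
The corollary is an immediate consequence of Propositions \ref{pr:stu3_KA-} and \ref{pr:stu3_gCC} already established. The plan is simply to compose the two isomorphisms, since both identify the target Lie algebras with the \emph{same} quotient of the Steinberg unitary algebra.

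More precisely, I would first apply Proposition \ref{pr:stu3_KA-} with $\gamma = (1,-1,2)$, which furnishes a $(\ZZ/2\ZZ)^2$-graded isomorphism
\[
\cK(\cA,-)\ \xrightarrow{\ \sim\ }\ \frstu_3(\cA,-,\gamma)/Z\bigl(\frstu_3(\cA,-,\gamma)\bigr).
\]
Then, since the hypothesis $\cA=\cC_1\otimes\cC_2$ is in force, I would apply Proposition \ref{pr:stu3_gCC} with the same $\gamma = (1,-1,2)$, which gives a $(\ZZ/2\ZZ)^2$-graded isomorphism
\[
\frg_{\alpha}(\overline{\cC}_1^{\op},\overline{\cC}_2^{\op})^{(1)}\ \xrightarrow{\ \sim\ }\ \frstu_3(\cA,-,\gamma)/Z\bigl(\frstu_3(\cA,-,\gamma)\bigr),
\]
for $\alpha = (1,2,-\tfrac{1}{2})$. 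Composing the first isomorphism with the inverse of the second produces the required $(\ZZ/2\ZZ)^2$-graded isomorphism between $\cK(\cA,-)$ and $\frg_{\alpha}(\overline{\cC}_1^{\op},\overline{\cC}_2^{\op})^{(1)}$.

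No new computation is needed: the compatibility of the parameters $\gamma$ and $\alpha$ is exactly what was checked while constructing the homomorphisms $\Phi$ and $\Psi$ above (the structure constants on the generators $\tilde\epsilon_i$ in \eqref{eq:tilde_epsilones} coincide with those governing the bracket between the $\iota_i$ in $\frg_{\alpha}$ for this choice of $\alpha$), and the preservation of the $(\ZZ/2\ZZ)^2$-gradings is part of the statement of each proposition, since both $\Phi$ and $\Psi$ are homogeneous of trivial degree on the generating subspaces $u_{ij}[\cA]$. There is no serious obstacle; the only thing worth flagging is that one should verify that the image of $\Psi$ is precisely the derived subalgebra $\frg_{\alpha}(\overline{\cC}_1^{\op},\overline{\cC}_2^{\op})^{(1)}$, which is clear because $\Psi$ is surjective onto the subalgebra generated by $\sum_{i=0}^2 \iota_i(\cC_1\otimes\cC_2)$, and this subalgebra is exactly the derived subalgebra of $\frg_{\alpha}(\overline{\cC}_1^{\op},\overline{\cC}_2^{\op})$.
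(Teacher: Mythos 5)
Your proposal is correct and coincides with the paper's (implicit) argument: the corollary is stated as an immediate consequence of Propositions \ref{pr:stu3_KA-} and \ref{pr:stu3_gCC}, both of which identify their respective algebras with the central quotient of $\frstu_3(\cA,-,\gamma)$ for $\gamma=(1,-1,2)$, as $(\ZZ/2\ZZ)^2$-graded Lie algebras. Composing one isomorphism with the inverse of the other, exactly as you describe, is all that is needed.
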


\smallskip

\noindent\emph{Proof of Theorem \ref{th:instrl(C1C2)}}.\quad
To begin with, given a structurable algebra $(\cA,-)$, the dimension of the associated Lie algebra
$\cK(\cA,-)$ is $\dim\frinstrl(\cA,-)+2\dim\cA+2\dim\cS$. Hence, for $\cA=\cC_1\otimes\cC_2$ for a Cayley
algebra $\cC_1$ and a unital composition algebra $\cC_2$ of dimension $n$ ($n=1,2,4$ or $8$), we get
\begin{multline*}
\dim\cK(\cA,-)=\dim\frinstrl(\cA,-)+2\times 8\times n+2\times (7+n-1)
\\=
     	\dim\frinstrl(\cA,-)+18n+12.
\end{multline*}
On the other hand, the dimension of 
$\frg_{\alpha}(\overline{\cC}_1^{\op},\overline{\cC}_2^{\op})^{(1)}$ is $52$, $77$, $133$, or $248$,
depending on $n$ being $1$, $2$, $4$ or $8$ (see \cite[\S 3]{Elduque_Magic}). This gives that the
dimension of $\frinstrl(\cA,-)$ is $22$, $29$, $49$, or $92$, respectively, while the dimension of 
$L_\cS L_\cS$ is, according to Lemma \ref{le:LSLS}, $22$, $29$, $46$, or $92$.

We conclude that if the dimension of $\cC_2$ is not $4$, these dimensions coincide, and hence the
ideal $L_\cS L_\cS$ is the whole $\frinstrl(\cA,-)$. However, if the dimension of $\cC_2$ is $4$, then
$\cC_2$ is isomorphic to the algebra of $2\times 2$-matrices over $\FF$ (recall that we are assuming in 
this section that the field $\FF$ is algebraically closed). In this case, for any $0\neq a\in\cS_2$, there
are elements $b,c\in\cS_2$ such that $a=bc$ (exercise for the reader!) and, because of the associativity of $\cC_2$, $L_a=L_bL_c$
lies in $L_\cS L_\cS\subseteq \frinstrl(\cA,-)$. But $V_{a,1}=L_a+2R_a$, so we conclude that the
right multiplication $R_a$ also belongs to $\frinstrl(\cA,-)$. Besides,
 $R_a^\delta=R_a+R_{\overline{R_a(1)}}=R_a-R_a=0$ (see Proposition \ref{pr:KantorLieAlgebra}).
Hence, $R_{\cS_2}$ lies in $\ker\delta$, while $\delta$ is one-to-one on $L_\cS L_\cS$ 
(Lemma \ref{le:LSLS}). Therefore, $R_{\cS_2}\cap L_\cS L_\cS=0$ and, by dimension count, 
$\frinstrl(\cA,-)=R_{\cS_2}\oplus L_\cS L_\cS$. \qed

\medskip

\subsection{The Lie algebra \texorpdfstring{$S(\cA,\cA)$}{S(A,A)}}

Let $(\cA,-)$ be the structurable algebra obtained as the tensor product of the Cayley algebra $\cC_1$ 
and a unital composition algebra $\cC_2$. Fix an element $s\in\cS$ with $L_s$ invertible, and take
the element $t\in\cS$ with $L_sL_t=L_tL_s=\id$. Equation \eqref{eq:AllQ1} shows that $Q(a)\neq 0$
and $t=-\frac{1}{Q(a)}s^{\sharp}$, where $Q$ is the Albert form.

Recall from Theorem \ref{th:structurable_Jternary}
that $\cA$ becomes a $J$-ternary algebra with triple product $\ptriple{x,y,z}=V_{x,sy}(z)$ for
$x,y,z\in\cA$, and that the Lie algebra $S(\cA,\cA)$ consists of those elements $T\in\frinstrl(\cA,-)$
such that $T^\delta(t)=0$. Because of \eqref{eq:KA-}, this can be expressed in terms
of the associated Lie algebra $\cK(\cA,-)$ in Proposition \ref{pr:KantorLieAlgebra} as follows:
\[
S(\cA,\cA)=\{T\in\frinstrl(\cA,-)\mid [T,(0,t)]=0\}.
\]
As in the proof of Theorem \ref{th:structurable_Jternary}, consider the elements $E=(0,t)$, 
$F=(0,s)^\sim$ and $H=\id$ in $\cK(\cA,-)$. Denote by $\frsl_2$ the subalgebra of $\cK(\cA,-)$ spanned
by $E$, $F$, and $H$. Hence, an element $T\in\frinstrl(\cA,-)$ lies in
$S(\cA,\cA)$ if and only if $[E,T]=0$. By Lemma \ref{le:5graded} this is equivalent to centralizing 
the subalgebra $\frsl_2$:
\[
S(\cA,\cA)=\Cent_{\cK(\cA,-)}(\frsl_2).
\]
Consider the subspace $\cS'\bydef (\FF t)^\perp$ (orthogonal relative to the Albert form $Q$ in
Proposition \ref{pr:AlbertForm}). Lemma \ref{le:LSLS} shows that the representation 
$\delta\colon\mathfrak{instrl}(\cA,-)\rightarrow\End_{\FF}(\cS)$ induces
an isomorphism of Lie algebras:
\[
\begin{split}
\delta'\colon M_{\cS',\cS'}&\longrightarrow \frso(\cS',Q)\\
          T\ &\mapsto\ T^\delta\vert_{\cS'},
\end{split}
\]
where, as usual, $M_{\cS',\cS'}=\espan{M_{a,b}=L_aL_{b^\sharp}-L_bL_{a^\sharp}\mid a,b\in\cS'}$.

A straightforward consequence of Theorem \ref{th:instrl(C1C2)} is the following result:

\begin{corollary}\label{co:MS'S'}
Let $(\cA,-)$ be the structurable algebra obtained as the tensor product of the Cayley algebra $\cC_1$ 
and a unital composition algebra $\cC_2$. Fix elements $s,t\in\cS$ such that $L_sL_t=L_tL_s=\id$, and
consider the subspace $\cS'$ of $\cS$ orthogonal to $t$ relative to the Albert form $Q$. 
\begin{itemize}
\item
If the dimension of $\cC_2$ is $1$, $2$, or $8$, then the Lie subalgebra $S(\cA,\cA)$ in Theorem
\ref{th:structurable_Jternary} equals $M_{\cS',\cS'}$, which is isomorphic to the orthogonal Lie algebra
$\frso(\cS',Q)$.
\item
If the dimension of $\cC_2$ is $4$,  then the Lie subalgebra $S(\cA,\cA)$ in Theorem
\ref{th:structurable_Jternary} equals $R_{\cS_2}\oplus M_{\cS',\cS'}$, which is the direct sum of 
the three-dimensional simple Lie algebra $R_{\cS_2}$, isomorphic to $\frsl_2$, and the subalgebra
$M_{\cS',\cS'}$, which is isomorphic to the orthogonal Lie algebra $\frso(\cS',Q)$.
\end{itemize}
\end{corollary}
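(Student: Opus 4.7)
The plan is to apply Theorem~\ref{th:structurable_Jternary}, which identifies $S(\cA,\cA)$ with $\{T\in\frinstrl(\cA,-)\mid T^\delta(t)=0\}$, and then to analyze this kernel-like condition along the direct sum decomposition of $\frinstrl(\cA,-)$ furnished by Theorem~\ref{th:instrl(C1C2)}. Everything reduces to separating the three types of summands ($R_{\cS_2}$, $\FF\id$, and $M_{\cS,\cS}$) under the representation $\delta$, using the very explicit formulas for $\delta$ recorded in Lemma~\ref{le:LSLS}.

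First I collect the ingredients. Lemma~\ref{le:LSLS} gives $\id^\delta=2\,\id_\cS$ and shows that $\delta$ restricts to an isomorphism $M_{\cS,\cS}\xrightarrow{\sim}\frso(\cS,Q)$ via $M_{a,b}\mapsto 2\sigma^Q_{a,b}$. From the proof of Theorem~\ref{th:instrl(C1C2)} we have $R_a^\delta=0$ for every $a\in\cS_2$, so $R_{\cS_2}\subseteq\ker\delta$ in the case $\dim\cC_2=4$. Finally, \eqref{eq:AllQ1} together with $L_sL_t=\id$ gives $t=-Q(s)^{-1}s^\sharp$, and since $\sharp$ is an isometry of $Q$, this yields $Q(t)=Q(s)^{-1}\neq 0$; the vector $t$ is anisotropic, so $\cS=\FF t\oplus\cS'$ is an orthogonal decomposition.

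Next I would write a generic $T\in\frinstrl(\cA,-)$ as $T=R+c\,\id+M$, with $R\in R_{\cS_2}$ (present only when $\dim\cC_2=4$), $c\in\FF$, and $M\in M_{\cS,\cS}$, and compute $T^\delta=M^\delta+2c\,\id_\cS\in\frso(\cS,Q)\oplus\FF\,\id_\cS$. The condition $T^\delta(t)=0$ reads $M^\delta(t)=-2ct$. Pairing with $t$ against $Q$ and using $M^\delta\in\frso(\cS,Q)$ (whence $Q(M^\delta(t),t)=0$) together with $Q(t)\neq 0$ forces $c=0$, and then $M^\delta(t)=0$. Hence $T\in S(\cA,\cA)$ if and only if $R\in R_{\cS_2}$ (automatic) and $M^\delta$ lies in the stabilizer of $t$ in $\frso(\cS,Q)$, namely $\frso(\cS',Q)$.

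It remains to identify $M_{\cS',\cS'}$ as the preimage of $\frso(\cS',Q)$ under $\delta|_{M_{\cS,\cS}}$. The inclusion $\delta(M_{\cS',\cS'})\subseteq\frso(\cS',Q)$ is immediate from $M_{a,b}^\delta=2\sigma^Q_{a,b}$, because $Q(a,t)=Q(b,t)=0$ for $a,b\in\cS'$; surjectivity onto $\frso(\cS',Q)$ follows because this orthogonal algebra is spanned by the $\sigma^Q_{a,b}$ with $a,b\in\cS'$; and injectivity is inherited from $\delta|_{M_{\cS,\cS}}$. Combining with Theorem~\ref{th:instrl(C1C2)} produces both cases of the corollary; in the case $\dim\cC_2=4$ the sum is direct because $R_{\cS_2}\subseteq\ker\delta$ whereas $M_{\cS',\cS'}\cap\ker\delta=0$. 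Lastly, for $\dim\cC_2=4$ the algebra $\cC_2\cong\Mat_2(\FF)$ is associative, so $[R_a,R_b]=R_{ab-ba}$ for $a,b\in\cS_2$, and $R_{\cS_2}\cong(\cS_2,[\,,\,])\cong\frsl_2$ (three-dimensional since $a\mapsto R_a$ is injective). The only mildly delicate step is the anisotropy of $t$, handled by \eqref{eq:AllQ1}; the rest is bookkeeping around Lemma~\ref{le:LSLS}.
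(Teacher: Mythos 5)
Your proposal is correct and follows essentially the same route as the paper: identify $S(\cA,\cA)$ with $\{T\in\frinstrl(\cA,-)\mid T^\delta(t)=0\}$ via Theorem~\ref{th:structurable_Jternary}, then decompose $T$ according to Theorem~\ref{th:instrl(C1C2)} and Lemma~\ref{le:LSLS} ($R_{\cS_2}\subseteq\ker\delta$, $\id^\delta=2\id_\cS$, $M_{a,b}^\delta=2\sigma^Q_{a,b}$), using the anisotropy of $t=-Q(s)^{-1}s^\sharp$ to kill the $\FF\id$ component and to identify the preimage of the stabilizer of $t$ with $M_{\cS',\cS'}\simeq\frso(\cS',Q)$ — exactly the ``straightforward consequence'' the paper leaves implicit. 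The only nitpick is the sign $[R_a,R_b]=R_{ba-ab}$ (right multiplications reverse the bracket), which does not affect the conclusion $R_{\cS_2}\simeq\frsl_2$.
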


\medskip

\subsection{Structure of \texorpdfstring{$\cA$}{A} as a module for
 \texorpdfstring{$S(\cA,\cA)$}{S(A,A)}}

In order to determine the Lie superalgebra $\Lss(\cA)=S(\cA,\cA)\oplus\cA$ attached to the
 $J$-ternary algebra associated to 
a structurable algebra $(\cA,-)$, 
obtained as the tensor product of a Cayley algebra $\cC_1$ and a unital composition algebra $\cC_2$, 
and a fixed element $s\in\cS$ with $L_s$ invertible, we need to know the structure of $\cA$ as a module
for $S(\cA,\cA)$.

We will follow now some arguments in \cite{Allison_Tensor}. Consider the linear map
\[
\begin{split}
\Phi\colon \cS'&\longrightarrow \End_\FF(\cA)\\
   a\,&\mapsto\ L_aL_s,
\end{split}
\]
which satisfies
\[
\Phi(a)^2=L_aL_sL_aL_s=Q(a)L_{s^\sharp}L_s=-Q(a)Q(s)\id,
\]
for any $a\in\cS'$, because of \eqref{eq:AllQ3} and \eqref{eq:AllQ1}. Hence, $\Phi$ extends to
an algebra homomorphism, still denoted by $\Phi$:
\begin{equation}\label{eq:PhiClifford}
\Phi\colon \Cl(\cS',\tilde Q)\longrightarrow \End_\FF(\cA),
\end{equation}
where $\tilde Q$ is the quadratic form defined on $\cS'$ by $\tilde Q(a)=-Q(a)Q(s)$ and 
$\Cl(\cS',\tilde Q)$ denotes the associated Clifford algebra. Denote by $u\cdot v$ the multiplication in
$\Cl(\cS',\tilde Q)$. The Lie algebra $\frso(\cS',Q)=\frso(\cS',\tilde Q)$ lives inside $\Cl(\cS',\tilde Q)$
as the subspace $\espan{[a,b]^\cdot\bydef a\cdot b-b\cdot a\mid a,b\in\cS'}$.

Actually, for any $a,b,c\in\cS'$, $a\cdot b\cdot a=a\cdot b\cdot a+a\cdot a\cdot b-\tilde Q(a)b=
a\cdot (a\cdot b+b\cdot a)-\tilde Q(a)b=\tilde Q(a,b)a-\tilde Q(a)b$, so that, by linearization, we
have $a\cdot b\cdot c+c\cdot b\cdot a=\tilde Q(a,b)c+\tilde Q(c,b)a-\tilde Q(a,c)b$. As a consequence,
\begin{multline*}
[a\cdot b-b\cdot a,c]^\cdot=(a\cdot b\cdot c+c\cdot b\cdot a)-(b\cdot a\cdot c+c\cdot a\cdot b)\\
	=-2\bigl(\tilde Q(a,c)b-\tilde Q(b,c)a\bigr)=-2\sigma_{a,b}^{\tilde Q}(c).
\end{multline*}
In other words, the linear map $\iota:\frso(\cS',\tilde Q)\rightarrow \Cl(\cS',\tilde Q)$ such that
\[
\iota(\sigma_{a,b}^{\tilde Q})=-\frac{1}{2}(a\cdot b-b\cdot a),
\] 
for $a,b\in\cS'$, is a one-to-one
homomorphism of Lie algebras.

\begin{lemma}\label{le:MS'S'Cl}
The diagram
\[
\begin{tikzcd}
\frso(\cS',\tilde Q) \arrow[d, "\iota"'] & M_{\cS',\cS'} 
\arrow[l, "{\delta'}", "\simeq"'] \arrow[d, hook] \\
\Cl(\cS',\tilde Q) \arrow[r, "\Phi"] & \End_\FF(\cA)
\end{tikzcd}
\]
commutes.
\end{lemma}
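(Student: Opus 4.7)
The plan is to chase an arbitrary generator $M_{a,b}$ of $M_{\cS',\cS'}$ around both paths of the square and check that the two resulting endomorphisms of $\cA$ agree. The right-down path is just the inclusion, so all the work lies in computing the left-down composition $\Phi\circ\iota\circ\delta'$ explicitly on $M_{a,b}$.

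First I would rewrite $\delta'(M_{a,b})$ in the language of $\tilde Q$. By \eqref{eq:Mab_sigma} it equals $2\sigma^{Q}_{a,b}$, and since $\tilde Q=-Q(s)Q$ on $\cS'$ their polar forms are proportional, giving $\sigma^{\tilde Q}_{a,b}=-Q(s)\sigma^{Q}_{a,b}$. Hence $\delta'(M_{a,b})=-\tfrac{2}{Q(s)}\sigma^{\tilde Q}_{a,b}$, and applying $\iota$ produces the element
\[
\iota\bigl(\delta'(M_{a,b})\bigr)=\frac{1}{Q(s)}\bigl(a\cdot b-b\cdot a\bigr)\in\Cl(\cS',\tilde Q).
\]

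Then I apply $\Phi$. Since $\Phi(a)=L_aL_s$, this pushes to $\frac{1}{Q(s)}\bigl(L_aL_sL_bL_s-L_bL_sL_aL_s\bigr)$. The crux is the identity
\[
L_sL_bL_s=Q(s)\,L_{b^\sharp}\qquad\text{for every }b\in\cS'.
\]
This follows from the Albert-form identity \eqref{eq:AllQ3} applied with $a=s$ and with $b$ replaced by $b^\sharp$: it yields $L_sL_bL_s=Q(s)L_{b^\sharp}-Q(s,b^\sharp)L_s$, and the correction term vanishes because $t=-\tfrac{1}{Q(s)}s^\sharp$, so $b\in\cS'=t^\perp$ together with the fact that $\sharp$ is an isometry of $Q$ gives $Q(s,b^\sharp)=Q(s^\sharp,b)=-Q(s)Q(t,b)=0$. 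Plugging in yields
\[
\Phi\bigl(\iota(\delta'(M_{a,b}))\bigr)=\frac{1}{Q(s)}\bigl(Q(s)L_aL_{b^\sharp}-Q(s)L_bL_{a^\sharp}\bigr)=L_aL_{b^\sharp}-L_bL_{a^\sharp}=M_{a,b},
\]
which is exactly the image of $M_{a,b}$ under the inclusion, so the diagram commutes.

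The only subtle point is identifying the right specialization of \eqref{eq:AllQ3} and recognizing that orthogonality of $b$ to $t$ translates, via $\sharp$, into orthogonality of $b^\sharp$ to $s$; once that is observed, the entire verification collapses to a one-line calculation using \eqref{eq:AllQ1}--\eqref{eq:AllQ3}. A minor housekeeping remark is that the target of $\delta'$ is $\frso(\cS',Q)=\frso(\cS',\tilde Q)$, since these orthogonal Lie algebras coincide for proportional quadratic forms, so the application of $\iota$ is legitimate.
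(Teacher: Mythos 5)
Your proof is correct and follows essentially the same route as the paper: compute $\delta'(M_{a,b})=-\tfrac{2}{Q(s)}\sigma^{\tilde Q}_{a,b}$ via \eqref{eq:Mab_sigma}, apply $\iota$, and then use \eqref{eq:AllQ3} together with $Q(s,b^\sharp)=Q(s^\sharp,b)=-Q(s)Q(t,b)=0$ to see that $\Phi(a\cdot b-b\cdot a)=Q(s)M_{a,b}$, so the composite returns $M_{a,b}$. The only difference is organizational (you chase the composite directly rather than computing $\Phi(a\cdot b-b\cdot a)$ and $\delta'(M_{a,b})$ separately), and your correction term $-Q(s,b^\sharp)L_s$ is in fact the right form of \eqref{eq:AllQ3}, which is immaterial here since that term vanishes.
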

\begin{proof}
For $a,b\in\cS'$ we have
\[
\Phi(a\cdot b-b\cdot a)=L_aL_sL_bL_s-L_bL_sL_aL_s
  =Q(s)(L_aL_{b^\sharp}-L_bL_{a^\sharp})=Q(s)M_{a,b},
\]
where we have used \eqref{eq:AllQ3} and that 
$Q(s,b^{\sharp})=Q(s^{\sharp},b)=-Q(s)Q(t,b)=0=Q(s,a^{\sharp})$, and
\[
\begin{split}
\delta'(M_{a,b})&=2\sigma_{a,b}^Q=2\bigl(Q(a,.)b-Q(b,.)a\bigr)\quad\text{by \eqref{eq:Mab_sigma}}\\
 &=-\frac{2}{Q(s)}\bigl(\tilde Q(a,.)b-\tilde Q(b,.)a\bigr)=-\frac{2}{Q(s)}\sigma_{a,b}^{\tilde Q}.
\end{split}
\]
Then we compute
\[
\Phi\circ\iota\circ\delta'(M_{a,b})
	=-\frac{2}{Q(s)}\Phi\circ\iota(\sigma_{a,b}^{\tilde Q})\\
	=\frac{1}{Q(s)}\Phi(a\cdot b-b\cdot a)=M_{a,b},
\]
as required.
\end{proof}

We will consider the different possibilities, according to the dimension of $\cC_2$:

\bigskip

\noindent $\mathbf{\dim\cC_2=1}$:\quad In this case we get the following dimensions:
\[
\dim\cS=7,\quad\dim\cS'=6,\quad\dim\End_\FF(\cA)=8^2=2^6=\dim\Cl(\cS',\tilde Q),
\]
so that the homomorphism $\Phi$ in \eqref{eq:PhiClifford} is an isomorphism, because $\Cl(\cS',\tilde Q)$
is a simple (associative) algebra. Then $\cA$ is the sum of the two half-spin modules for the
orthogonal Lie algebra $M_{\cS',\cS'}\simeq\frso(\cS',\tilde Q)\simeq\frso_6\simeq\frsl_4$. These
half-spin modules, thought as modules for $\frsl_4$, are the natural four-dimensional module for 
$\frsl_4$ and its dual.

Therefore,  Corollary \ref{co:MS'S'} shows that the even part of $\Lss(\cA)$ is, up to isomorphism, 
$\frsl_4\simeq\frsl(W)$ for a four-dimensional vector space $W$, while the odd part is isomorphic,
as a module for $\frsl(W)$ to $W\oplus W^*$. But, as is well-known (and valid in characteristic
$3$), one has
$
\Hom_{\frsl(W)}(W\otimes W,\frsl(W))=0=\Hom_{\frsl(W)}(W^*\otimes W^*,\frsl(W))
$
and 
$
\Hom_{\frsl(W)}(W\otimes W^*,\frsl(W))$ is one-dimensional. Hence, the Lie bracket in
$\Lss(\cA)\subuno\times\Lss(\cA)\subuno\rightarrow \Lss(\cA)\subo$ is uniquely determined, and hence
$\Lss(\cA)$ is isomorphic to the projective special linear Lie superalgebra $\frpsl(4\vert 1)$.

\bigskip

\noindent $\mathbf{\dim\cC_2=2}$:\quad Here we get
 the following dimensions:
\[
\dim\cS=8,\quad\dim\cS'=7,\quad \dim\Cl(\cS',\tilde Q)=2^7,\quad\dim\Cl\subo(\cS',\tilde Q)=2^6.
\]
The even Clifford algebra is simple of dimension $2^6$, so its unique irreducible module has dimension
 $8$, and hence the representation of 
$S(\cA,\cA)=M_{\cS',\cS'}$ on $\cA$, given by the composition in Lemma \ref{le:MS'S'Cl}, is the direct sum
of two copies of the spin module for $M_{\cS',\cS'}\simeq \frso_7$. But if $W$ is the spin module for
$\frso_7$, then $\Hom_{\frso_7}(W\otimes W,\frso_7)$ is one-dimensional and spanned by a 
skew-symmetric map (see, e.g., \cite[Proposition 2.12]{Elduque_SomeNew}). Therefore, up to
scalars, there is a unique $\frso_7$-invariant symmetric bilinear map 
$\Lss(\cA)\subuno\times\Lss(\cA)\subuno\rightarrow\Lss(\cA)\subo$, and $\Lss(\cA)$ is necessarily
isomorphic to the Lie superalgebra $\frg(3,3)=\frg(S_{1,2},S_{1,2})$ in 
\cite[Proposition 5.19]{CunhaElduque_Extended}.

\bigskip

\noindent $\mathbf{\dim\cC_2=4}$:\quad 
In this case we get the following dimensions:
\[
\dim\cS=10,\quad\dim\cS'=9,\quad \dim\Cl(\cS',\tilde Q)=2^9,\quad\dim\Cl\subo(\cS',\tilde Q)=2^8.
\]
Corollary \ref{co:MS'S'} shows that $S(\cA,\cA)$ is the direct sum of the simple three-dimensional
Lie algebra $R_{\cS_2}$ and the ideal $M_{\cS',\cS'}\simeq\frso_9$. The action of the quaternion
algebra $\cC_2\simeq 1\otimes\cC_2$ by multiplication on the right on $\cA$ commutes with the
action of $M_{\cS',\cS'}$, because of the associativity of $\cC_2$. By dimension count, and using
that the even Clifford algebra $\Cl\subo(\cS',\tilde Q)$ is simple, $\Phi$ induces
an isomorphism, denoted by the same letter:
\[
\Phi:\Cl\subo(\cS',\tilde Q)\rightarrow \End_{\cC_2}(\cC_1\otimes\cC_2)
	\simeq \End_\FF(\cC_1)\otimes\cC_2.
\]
As a module for $M_{\cS',\cS'}\simeq\frso_9$, $\cA$ is a sum of two copies of the spin  module $W$ 
(the irreducible module for the simple algebra $\Cl\subo(\cS',\tilde Q)$), and thus 
$R_{\cS_2}$ lies in $\End_{M_{\cS',\cS'}}(\cA)\simeq\End_{\frso_9}(W\oplus W)\simeq \Mat_2(\FF)$.
As a consequence, $\Lss(\cA)\subo$ is isomorphic to $\frsl_2\oplus \frso_9$, while $\Lss(\cA)\subuno$ is,
as a module for the even part, isomorphic to the tensor product of the two-dimensional natural module 
$V$ for $\frsl_2$ and the spin module $W$ for $\frso_9$. The uniqueness, up to scalars, of the elements 
in the
vector spaces $\Hom_{\frsl_2}(V\otimes V,\FF)$, $\Hom_{\frsl_2}(V\otimes V,\frsl_2)$, 
$\Hom_{\frso_9}(W\otimes W,\FF)$, and $\Hom_{\frso_9}(W\otimes W,\frso_9)$ (see
\cite[Proposition 2.12]{Elduque_SomeNew}) forces that the bracket 
$\Lss(\cA)\subuno\times\Lss(\cA)\subuno\rightarrow\Lss(\cA)\subo$ is uniquely determined up to scalars
(because of the Jacobi identity), and hence $\Lss(\cA)$ turns out to be isomorphic to the exceptional
Lie superalgebra $\frel(5;3)$ (\cite[Theorem 3.3]{Elduque_Models}).

\bigskip

\noindent $\mathbf{\dim\cC_2=8}$:\quad 
Here we have the following dimensions:
\[
\dim\cS=14,\quad\dim\cS'=13,\quad \dim\Cl\subo(\cS',\tilde Q)=2^{12}=(2^6)^2=\dim\End_\FF(\cA),
\]
and hence the homomorphism $\Phi$ in \eqref{eq:PhiClifford} restricts to an isomorphism from
the even Clifford algebra $\Cl\subo(\cS',\tilde Q)$, which is simple, onto $\End_\FF(\cA)$. It turns out
that $\cA$ is the spin module for $S(\cA,\cA)=M_{\cS',\cS'}\simeq \frso_{13}$. As the space
$\Hom_{\frso_{13}}(W\otimes W,\frso_{13})$ is one-dimensional 
(see \cite[Proposition 2.12 and Theorem 3.1]{Elduque_SomeNew}), where $W$ is the spin module
for $\frso_{13}$, we conclude that $\Lss(\cA)$ is isomorphic to the Lie superalgebra 
$\frg(6,6)=\frg(S_{4,2},S_{4,2})$ in \cite[Proposition 5.10]{CunhaElduque_Extended}.

\bigskip

We summarize our findings in the next theorem.

\begin{theorem}
Let $(\cA,-)$ be the structurable algebra obtained as the tensor product of the Cayley algebra $\cC_1$ 
and a unital composition algebra $\cC_2$ over an algebraically closed field of characteristic $3$. 
Let $s$ be a skew-symmetric element such that  $L_s$ is invertible, and let
$\Lss(\cA)$ be the Lie superalgebra attached to the $J$-ternary algebra defined on $\cA$ in Theorem \ref{th:structurable_Jternary}.  Then the Lie superalgebra $\Lss(\cA)$ is given, up to isomorphism, by
the following table:
\[
\begin{tabular}{c|cccc}
$\dim\cC_2$ &{\vrule height 12pt width 0ptdepth 2pt} $1$ & $2$ & $4$ & $8$\\
\hline
$\Lss(\cA)$ &{\vrule height 12pt width 0pt depth 2pt}
                       $\frpsl(4\vert 1)$&$\frg(3,3)$&$\frel(5;3)$&$\frg(6,6)$
\end{tabular}
\]
\end{theorem}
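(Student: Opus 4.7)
The plan is to analyze the four cases $\dim\cC_2\in\{1,2,4,8\}$ separately, using the framework set up in the previous subsections. In each case, the even part $\Lss(\cA)\subo=S(\cA,\cA)$ is already pinned down by Corollary \ref{co:MS'S'}, and the odd part $\Lss(\cA)\subuno=\cA$ as an $S(\cA,\cA)$-module is controlled by the Clifford algebra homomorphism $\Phi\colon\Cl(\cS',\tilde Q)\to \End_\FF(\cA)$ in \eqref{eq:PhiClifford}, together with Lemma \ref{le:MS'S'Cl} which identifies $M_{\cS',\cS'}$ with the image of $\frso(\cS',\tilde Q)\hookrightarrow\Cl(\cS',\tilde Q)$ under $\Phi$. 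Finally, the bracket $\Lss(\cA)\subuno\times\Lss(\cA)\subuno\to\Lss(\cA)\subo$, being a symmetric $S(\cA,\cA)$-invariant bilinear map, will be essentially determined (up to an irrelevant scalar) by the one-dimensionality of suitable $\Hom$-spaces of invariants.

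For each case I would first compute the relevant dimensions: $\dim\cS=7+\dim\cC_2-1$, $\dim\cS'=\dim\cS-1$, and $\dim\Cl(\cS',\tilde Q)=2^{\dim\cS'}$. Comparing with $\dim\End_\FF(\cA)=64\,(\dim\cC_2)$ determines whether $\Phi$ (or its restriction to the even Clifford part) is an isomorphism onto $\End_\FF(\cA)$, onto $\End_{\cC_2}(\cA)$, or something in between. In the cases $\dim\cC_2\in\{1,2,8\}$ a straightforward dimension comparison makes $\Phi$ or $\Phi|_{\Cl\subo}$ surjective onto an obvious centralizer, which pins $\cA$ down as a sum of (half-)spin modules for $\frso(\cS',\tilde Q)\simeq\frso_6,\frso_7,\frso_{13}$ respectively. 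In the case $\dim\cC_2=4$, the quaternion algebra acts on $\cA$ on the right and commutes with $M_{\cS',\cS'}$, giving $\Lss(\cA)\subo\simeq\frsl_2\oplus\frso_9$ (by Corollary \ref{co:MS'S'}) and $\Lss(\cA)\subuno\simeq V\otimes W$ as even module, with $V$ the natural $\frsl_2$-module and $W$ the $\frso_9$-spin module.

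To conclude the identification with the superalgebras in the table, I would use the uniqueness, up to scalar, of invariant bilinear forms and maps in each situation. For $\frsl(W)$ with $\dim W=4$ (the case $\dim\cC_2=1$), the fact that $\Hom_{\frsl(W)}(W\otimes W,\frsl(W))=0=\Hom_{\frsl(W)}(W^*\otimes W^*,\frsl(W))$ and $\Hom_{\frsl(W)}(W\otimes W^*,\frsl(W))$ is one-dimensional forces $\Lss(\cA)\simeq\frpsl(4|1)$. For the $\frso_n$ cases with $W$ the spin module, the one-dimensionality of $\Hom_{\frso_n}(W\otimes W,\frso_n)$ (see \cite[Proposition 2.12]{Elduque_SomeNew}) together with Jacobi identity determines the bracket up to rescaling, and comparison with the superalgebras $\frg(3,3)$ and $\frg(6,6)$ from \cite[Propositions 5.10, 5.19]{CunhaElduque_Extended} identifies them in the cases $\dim\cC_2=2,8$. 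In the case $\dim\cC_2=4$, the combined uniqueness statements for $\frsl_2$ and $\frso_9$-invariants produce an essentially unique bracket, which matches the exceptional Lie superalgebra $\frel(5;3)$ of \cite[Theorem 3.3]{Elduque_Models}.

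The main obstacle I expect is not in the dimension counting or module identification, but in verifying that the Lie superalgebra obtained by the semisimplification process actually matches the target algebras in the table rather than some central extension or quotient. This requires carefully checking that the symmetric invariant pairing $\Lss(\cA)\subuno\otimes\Lss(\cA)\subuno\to\Lss(\cA)\subo$ coming from $S(\cA,\cA)$ is nonzero (which follows from the fact that $\cA$ is a nontrivial $J$-ternary algebra and $S(x,y)$ does not vanish identically), and that the resulting Lie superalgebra is simple of the right superdimension. Once nonvanishing and invariance are established, the uniqueness of invariant maps makes the identification with the tabulated Lie superalgebras automatic.
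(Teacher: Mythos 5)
Your proposal follows essentially the same route as the paper: even part from Corollary \ref{co:MS'S'}, odd part identified as (half-)spin modules via the Clifford homomorphism $\Phi$ of \eqref{eq:PhiClifford} and Lemma \ref{le:MS'S'Cl} (with the extra $\frsl_2$ factor from $R_{\cS_2}$ when $\dim\cC_2=4$), and the final identification by one-dimensionality of the relevant spaces of invariant maps, citing the same sources. Only a trivial slip: $\dim\End_\FF(\cA)=64(\dim\cC_2)^2$, not $64\dim\cC_2$, which does not affect the dimension-count argument.
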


\medskip

\subsection{A magic square of Lie superalgebras}\label{ss:magic}

If our structurable algebra $(\cA,-)$ is the tensor product of two associative unital composition algebras,
then $(\cA,-)$ is associative with involution and hence it fits in Example \ref{ex:hermitian_prototypical}
with $\cW=0$. According to this Example, the isotope $\cB\bydef\cA^{(s)}$ is endowed with
the involution $\tau:x\mapsto -\overline{x}$, $\cA$ is a left $\cB$-module by means of 
$b\bullet x\bydef bsx$, for all $b,x\in\cA$, and it is endowed with a nondegenerate skew-hermitian form 
$\tilde\hup:\cA\times\cA\rightarrow \cB$ given by $\tilde\hup(x,y)=-x\overline y$. The
triple product on $\cA$ is given by
\[
\ptriple{x,y,z}=V_{x,sy}(z)=\tilde\hup(x,y)sz+\tilde\hup(z,y)sx+\tilde\hup(z,x)sy,
\]
for $x,y,z\in\cA$.

In case $\dim\cC_1=\dim\cC_2=4$, $\cA$ (and hence $\cB$) is a simple algebra, so that it is
isomorphic to $\Mat_4(\FF)$, the involution $-$ on 
$\cA$ is orthogonal, and hence the involution
$\tau$ on $\cB$ is symplectic, and Theorem \ref{th:proto_super} implies that $\Lss(\cA)$ is isomorphic
to $\frosp(4\vert 4)$. The case of $\dim\cC_1=1$, $\dim\cC_2=4$ is similar, but $\cB$ is isomorphic
to $\Mat_2(\FF)$ in this case, the involution $\tau$ is orthogonal, and $\Lss(\cA)$ is isomorphic
to $\frosp(2\vert 2)$.

For $\dim\cC_1=2$ and $\dim\cC_2=4$, $(\cA,-)$ is a simple algebra with involution, but it is not simple
as an algebra, the involution being of the second kind, and Theorem \ref{th:proto_super} gives that
$\Lss(\cA)$ is isomorphic to $\frpsl(2\vert 2)$. The same happens for $\dim\cC_1=2$ and $\dim\cC_2=1$,
in which case $\Lss(\cA)$ is isomorphic to $\frpsl(1\vert 1)$.

The case of  $\dim\cC_1=\dim\cC_2=1$ does not give a $J$-ternary algebra, as $\cS=0$. Finally, if
$\dim\cC_1=\dim\cC_2=2$, $(\cA,-)$ is the direct sum of two two-dimensional ideals, both with involution
of the second kind, so it follows that $\Lss(\cA)$ is isomorphic to $\frpsl(1\vert 1)\oplus\frpsl(1\vert 1)$.

\smallskip

We finish the paper by collecting all this information
 in the following \emph{magic square} of Lie superalgebras:
\[
\begin{matrix}
&\quad \dim\cC_1\\
\begin{matrix}\dim\cC_2\\[45pt] \null\end{matrix}&
\vbox{\offinterlineskip
 \halign{\hfil\ $#$\quad \hfil&%
 \vreglon #%
 &\hfil\ $#$\ \hfil&\hfil\ $#$\ \hfil
 &\hfil\ $#$\ \hfil&\hfil\ $#$\ \hfil&%
 \vrule  depth 4pt width .5pt #\cr
 & &1&2&4&8&\omit\cr
 \multispan7{\hreglonfill}\cr
  1&&&\frpsl(1\vert 1)&\frosp(2\vert 2)&\frpsl(4\vert 1)&\cr
  2&&\frpsl(1\vert 1)&\frpsl(1\vert 1)\oplus\frpsl(1\vert 1)&\frpsl(2\vert 2)&\frg(3,3)&\cr
  4&&\frosp(2\vert 2)&\frpsl(2\vert 2)&\frosp(4\vert 4)&\frel(5;3)&\cr
  8&&\frpsl(4\vert 1)&\frg(3,3)&\frel(5;3)&\frg(6,6)&\cr
 &\multispan6{\hregletafill}\cr}}
\end{matrix}
\]
\vspace*{-30pt}

\noindent that contains three of the exceptional finite-dimensional Lie superalgebras specific of characteristic $3$.

%-------------------------------------------

\bigskip

\end{document}